\newcommand{\R}{\mathbb{R}}
\newcommand{\C}{\mathbb{C}}
\renewcommand{\H}{\mathbb{H}}
\renewcommand{\epsilon}{\varepsilon}
\renewcommand{\theta}{\vartheta}
\renewcommand{\phi}{\varphi}
\renewcommand{\Re}{\mathrm{Re}}
\theoremstyle{plain}
\newtheorem{teor}{Theorem}
\newtheorem{prop}[teor]{Proposition}
\newtheorem{lem}[teor]{Lemma}
\newtheorem{cor}[teor]{Corollary}
\theoremstyle{definition}
\newtheorem*{oss}{Remark}
\theoremstyle{definition}
\newtheorem{defn}[teor]{Definition}
\title[Fully non-linear elliptic equations on hyperhermitian manifolds]{Fully non-linear elliptic equations on compact manifolds with a flat hyperk\"ahler metric}
\begin{document}
	

	\address{Dipartimento di Matematica G. Peano \\ Universit\`a di Torino\\
		Via Carlo Alberto 10\\
		10123 Torino\\ Italy}
	\email{giovanni.gentili@unito.it}
	
	\address{Jiaogen Zhang, School of Mathematical Sciences, University of Science and Technology of China, Hefei 230026, People's Republic of China }
	\email{zjgmath@ustc.edu.cn}
	\subjclass[2020]{35B45, 53C26, 35J60, 32W50}
	\keywords{A priori estimates, hyperk\"ahler manifold with torsion, fully non-linear elliptic equations, $\mathcal{C}$-subsolution}
	
	\author{Giovanni Gentili and Jiaogen Zhang}
	
	\date{\today}

	\maketitle
	\begin{abstract}
		Mainly motivated by a conjecture of Alesker and Verbitsky, we study a class of fully non-linear elliptic equations on certain compact hyperhermitian manifolds.  By adapting the approach of Sz\'{e}kelyhidi \cite{Sze} to the hypercomplex setting, we prove some a priori estimates for solutions to such equations under the assumption of existence of $\mathcal{C}$-subsolutions. In the estimate of the quaternionic Laplacian,  we need to further assume the existence of a flat hyperk\"ahler metric. As an application of our results we prove that the quaternionic analogue of the Hessian equation and Monge-Amp\`ere equation for $(n-1)$-plurisubharmonic functions can always be solved on compact flat hyperk\"ahler manifolds.
	\end{abstract}
	
	\section{Introduction}
	A {\em hypercomplex manifold} is a smooth manifold $M$ of real dimension $ 4n $ equipped with a triple of complex structures $(I,J,K)$ satisfying the quaternionic relations
	\[
	IJ=-JI=K\,.
	\]
	A Riemannian metric $ g $ on a hypercomplex manifold $ (M,I,J,K) $ is said to be {\em hyperhermitian} if it is Hermitian with respect to each of $ I,J,K $. Any hyperhermitian metric induces a $2$-form 
	$$
	\Omega_{0}=\omega_J+i\omega_K
	$$ 
	where $\omega_J$ and $\omega_K$ are the fundamental forms of $(g,J)$ and $(g,K)$ respectively. The form $\Omega_{0}$ is of type $(2,0)$ with respect to $I$, satisfies the {\em $q$-real} condition $J\Omega_{0}=\overline{\Omega_{0}} $ (here $ J $ acts on $ \Omega_0 $ as $J\Omega_0(\cdot,\cdot)=\Omega_{0}(J\cdot,J\cdot)$) and is {\em positive} in the sense that $\Omega_0(X,JX)>0$ for every nowhere vanishing real vector field on $M$. $(M,I,J,K,g)$ is hyperk\"ahler if and only if $\Omega_0$ is closed, while it is called HKT ({\em hyperk\"ahler with torsion}) if $\Omega_0$ is $\partial$-closed, where $\partial$ is taken with respect to $I$. The geometry of HKT manifolds is widely studied in the literature (see e.g. \cite{Alesker-Verbitsky (2006),Banos,Barberis-Fino,GF,GLV,Grantcharov-Poon (2000),Howe-Papadopoulos (1996),Ivanov,Lejmi-Weber,SThesis,Swann,Verbitsky (2002),Verbitsky (2007),Verbitsky (2009)} and the reference therein). \\
	
	Fix a $q$-real $(2,0)$ form $\Omega$, a smooth map $\varphi\colon M\to \R$ on a hyperhermitian manifold $(M,I,J,K,g)$ is called {\em quaternionic $ \Omega $-plurisubharmonic} if 
	$$
	\Omega_\phi:=\Omega+\partial\partial_J\varphi \mbox{ is positive}\,,
	$$
	where 
	\[
	\partial_J:=J^{-1}\bar \partial J
	\]
	is the {\em twisted differential operator} introduced by Verbitsky \cite{Verbitsky (2002)}, being $\bar \partial $ the conjugate of $\partial$. Animated by the study of ``canonical'' HKT metrics, in analogy to the Calabi conjecture \cite{Calabi} proved by Yau in \cite{yau}, Alesker and Verbitsky proposed in \cite{Alesker-Verbitsky (2010)} to study the {\em quaternionic Monge-Amp\`ere equation}:
	\begin{equation}\label{qMA}
		\Omega_\varphi^n=b\,{\rm e}^H\Omega^n_0
	\end{equation}
	on a compact HKT manifold, where $H\in C^{\infty}(M,\R)$ is given, while $(\varphi,b)\in C^{\infty}(M,\R)\times \R_+$ is the unknown. Even if the solvability of the quaternionic  Monge-Amp\`ere equation is still an open problem in its general form, several partial results are available in the literature \cite{Alesker (2013),Alesker-Shelukhin (2013),Alesker-Shelukhin (2017),Alesker-Verbitsky (2010),BGV,DinewSroka,GentiliVezzoni,GV,Sroka,Z}. A nice geometric application of the solvability of equation \eqref{qMA} is the existence of a unique balanced metric $\tilde g$ on a compact HKT manifold $(M,I,J,K,g)$ with holomorphically trivial canonical bundle with respect to $I$ such that the form $\tilde \Omega$ induced by $\tilde g$ belongs to the class $\{\Omega+\partial \partial_J \varphi\}$ (see \cite{Verbitsky (2009)}).  From this point of view, equation \eqref{qMA} is the ``quaternionic counterpart'' of the complex Monge-Amp\`ere equation in K\"ahler geometry and balanced HKT metrics play the role that Calabi-Yau metrics play in K\"ahler geometry.

	\medskip 
	Following the parallelism between Hermitian and hyperhermitian geometry it is quite natural to enlarge the study of the quaternionic Monge-Amp\`ere equation to
	a general set of fully non-linear elliptic equations on hypercomplex manifolds. Here we adapt the description given by Sz\'{e}kelyhidi in \cite{Sze} to the hypercomplex setting.
	
	\medskip
	In the current paper we consider hypercomplex manifolds $(M,I,J,K)$ which are locally isomorphic to $\mathbb H^n$. Unlike complex manifolds, where the integrability of the complex structure guarantees that every point has a neighborhood biholomorphic to an open subset of $\C^n$, for hypercomplex manifolds the integrability of the hypercomplex structure is not enough to ensure that $(M,I,J,K)$ is locally isomorphic to the standard flat space. These manifolds were first introduced by Sommese in \cite{S} and are today called {\em locally flat} since they can be characterized as hypercomplex manifolds having the curvature of the Obata connection \cite{Obata (1956)} identically zero. We recall that the Obata connection $ \nabla$ is the unique torsion free connection on a hypercomplex manifold $(M,I,J,K)$ that preserves the hypercomplex structure, i.e.
	\[
	\nabla I=\nabla J= \nabla K =0\,.
	\]
	
	On  a  {\em locally flat} hypercomplex manifold $ (M,I,J,K) $,  we can locally find  real coordinates $\{x_p^r\}$, $p=0,1,2,3$, $r=1,\dots n$, such that $(I,J,K)$
	takes the standard form. Setting 
	$$
	q^{r}:=\sum_{p=0}^{3}x_p^r\,e_p\,,
	$$
	where, in order to simplify the notation, we denote the unit quaternions $1,i,j,k$ with $e_0,e_1,e_2,e_3$, we can define the $\mathbb H^{n}$-valued function $(q^1,\dots,q^n)$, which we refer to as {\em quaternionic coordinates}. We can then introduce the {\em quaternionic derivatives} $\partial_{q^r}$ and $\partial_{\bar q^r}$ acting on a smooth $ \H $-valued function $u$ as 
	$$
	\partial_{q^r}u:=\partial_{x_0^r}u\, e_0-\sum_{i=1}^3 \partial_{x_i^r}u\, e_i \,,\quad 
	\partial_{	\bar q^r}u:=\sum_{i=0}^3e_{i}\, \partial_{x_i^r}u \,.
	$$
	The operators $\partial_{q^r}$ and $\partial_{\bar q^s}$ commute, but they do not satisfy the Leibniz rule.  Using the vector fields  $\partial_{q^r}$ and $\partial_{\bar q^r}$ we can locally regard every q-real $(2,0)$-form $ \Omega$ on $M$ as a hyperhermitian matrix $ (\Omega_{\bar r s})$, i.e. as a $n \times n$ quaternionic matrix lying in $ \mathrm{Hyp}(n,\H)=\{ H \in \H^{n,n} \mid H= H^* \} $, where $ H^*={^t\bar H} $. Moreover, for a smooth real-valued function $\varphi$ on $M$,  the matrix associated to $\Omega_\phi=\Omega+\partial\partial_J \varphi$ is $(\Omega^\phi_{\bar rs})=(\Omega_{\bar r s}+\tfrac14 \partial_{\bar q^r}\partial_{q^s}\varphi)$. The matrix $\mathrm{Hess}_\H \phi:=(\varphi_{\bar rs})=(\tfrac14 \partial_{\bar q^r}\partial_{q^s}\varphi)$ is usually called the {\em quaternionic Hessian} of $\varphi$.
	
	\medskip 
	Now we can describe the class of equations we take into account in the present paper.\\
	Let $(M,I,J,K,g)$ be a compact locally flat hyperhermitian manifold and let $\Omega$ be a fixed q-real $(2,0)$-form on $M$ ($\Omega$ is not necessarily the $(2,0)$-form induced by $g$). For  a smooth real function $\varphi$ on $M$ let $\Omega_{\varphi}:=\Omega+\partial\partial_J\varphi$ and $ A^r_s=g^{\bar j r}\Omega^\phi_{ \bar j s}$.  
	The matrix $(A^r_s)$ defines a hyperhermitian endomorphism of $ TM $ with respect to the metric $ g $, i.e. $ A=g^{-1}A^* g $. Note that in general, for quaternionic matrices one does not have (right) eigenvalues in the usual sense, rather conjugacy classes of them. However for hyperhermitian matrices there is a single real eigenvalue in each conjugacy class. Therefore, we consider the function $ \lambda \colon \mathrm{Hyp}(n,\H) \to \R^n $ which associates to a matrix $A$ the $ n $-tuple of its eigenvalues $ \lambda(A) $.
	
	We can then consider an equation of the following type 
	\begin{equation}
		\label{eq_main}
		F(A)=h\,,
	\end{equation}
	where $ h\in C^\infty(M,\R) $ is given and $ F(A)=f(\lambda(A)) $ is a smooth symmetric operator of the eigenvalues of $ A $. Here $ f\colon \Gamma \to \R $, where   
	$ \Gamma $ is a proper convex open cone in $ \R^n $ with vertex at the origin which is symmetric (i.e. it is invariant under permutations of the $ \lambda_i $'s) and contains the positive orthant 
	\[
	\Gamma_n=\{ \lambda=(\lambda_1,\dots,\lambda_n) \in \R^n \mid \lambda_i>0,\, i=1,\dots,n \}\,.
	\]
	We further require that $ f\colon \Gamma \to \R $ satisfies the following assumptions:
	\begin{enumerate}
		\itemsep0.2em
		\item[C1)] $ f_i:=\frac{\partial f}{\partial \lambda_i}>0 $ for all $ i=1,\dots,n $ and $ f $ is a concave function.
		\item[C2)] $ \sup_{\partial \Gamma}f<\inf_Mh $, where $ \sup_{\partial \Gamma}f=\sup_{\lambda_0\in \partial \Gamma} \limsup_{\lambda \to \lambda_0} f(\lambda) $.
		\item[C3)] For any $ \sigma <\sup_\Gamma f $ and $ \lambda \in \Gamma $ we have $ \lim_{t\to \infty} f(t\lambda)>\sigma $.
	\end{enumerate}
	
	Assumption C1 ensures that equation \eqref{eq_main} is elliptic when $\phi$ is {\em $\Gamma$-admissible}, i.e.
	\[
	\lambda\left( g^{\bar k r}(\Omega_{\bar k s} +\phi_{\bar k s} \right)\in \Gamma\,.
	\]
	Assumption C2 says that the level sets of $f$ never touch the boundary of $\Gamma$, which also ensures that \eqref{eq_main} is non-degenerate and then uniformly elliptic once we have established the $C^{2}$ estimate.
	
	An analogue framework was firstly considered by Caffarelli, Nirenberg and Spruck \cite{CNS} in $ \R^n$ and later by Li \cite{Li}, Urbas \cite{Urbas}, Guan \cite{Gu,Guan} and Guan and Jiao \cite{GJ} on Riemannian manifolds. Sz\'{e}kelyhidi \cite{Sze} studied this framework in Hermitian Geometry for elliptic equations and Phong and T\^o \cite{PhongTo} for parabolic equations. Sz\'{e}kelyhidi's work has been recently generalized in  \cite{CHZ,HZ} to the almost Hermitian setting. 
	
	\medskip
	Our main result is the following:
	
	\begin{teor}\label{teor_main}
		Let $ (M,I,J,K,g) $ be a compact flat hyperk\"ahler manifold, $ \Omega $ a q-real $ (2,0) $-form, and $ \underline{\phi} $ a $ \mathcal{C} $-subsolution of \eqref{eq_main}. Then there exist $ \alpha \in (0,1) $ and a constant $ C>0 $, depending only on $ (M,I,J,K,g),$  $\Omega,$  $h $ and $ \underline{\phi} $, such that any $\Gamma$-admissible solution $ \phi $ to \eqref{eq_main} with $ \sup_M\phi=0 $ satisfies the estimate
		\[
		\|\phi\|_{C^{2,\alpha}}\leq C\,.
		\]
	\end{teor}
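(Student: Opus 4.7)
The plan is to adapt the strategy of Sz\'ekelyhidi \cite{Sze} to the quaternionic setting, carrying out in order a $C^0$ estimate, a second-order estimate in terms of the gradient, a $C^1$ estimate via a blow-up argument, and finally Evans-Krylov theory to upgrade to $C^{2,\alpha}$. Throughout, the structural hypotheses C1-C3 and the $\mathcal{C}$-subsolution assumption enter qualitatively as in the complex case, while the flat hyperk\"ahler hypothesis is used decisively only in the second-order estimate.

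For the $C^0$ bound I would mimic the Alexandrov-Bakelman-Pucci argument: in local quaternionic coordinates the operator $\partial\partial_J$ corresponds to $\tfrac14 \partial_{\bar q^r}\partial_{q^s}$, so the ABP inequality on $\R^{4n}$ can be applied to a regularized concave envelope of $\phi$, with the $\mathcal{C}$-subsolution $\underline{\phi}$ supplying the needed quantitative nondegeneracy through condition C2. The heart of the argument is the $C^2$ estimate, and here I would consider a test function of the form
\[
G=\log\lambda_1(A)+\eta(|\nabla\phi|^2)+N(\phi-\underline{\phi}),
\]
where $\lambda_1(A)$ is the largest eigenvalue of $A=(g^{\bar j r}\Omega^\phi_{\bar j s})$, $\eta$ is a suitable concave auxiliary function, and $N$ is large. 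At a maximum of $G$ one differentiates \eqref{eq_main} twice, applies the concavity from C1, and uses the $\mathcal{C}$-subsolution property to absorb the bad terms in the standard Sz\'ekelyhidi dichotomy on the eigenvalues. The crucial simplification on a flat hyperk\"ahler manifold is that the Obata connection coincides with the flat Levi-Civita connection of $g$, so quaternionic partial derivatives commute with covariant derivatives, the metric coefficients $g^{\bar r s}$ are parallel, and the curvature and torsion contributions that would otherwise appear when commuting $\partial_{q^r}$ and $\partial_{\bar q^s}$ across differentiated expressions all vanish identically.

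With a $C^2$-in-terms-of-$C^1$ bound in hand, the $C^1$ estimate follows from a blow-up argument: assuming a sequence of solutions with $\sup_M|\nabla\phi_k|\to\infty$, one rescales in quaternionic coordinates around the maxima of $|\nabla\phi_k|$ and extracts, using the local $C^0$ and $C^2$ bounds already established, a non-constant $\Gamma$-admissible entire solution on $\H^n$ with bounded gradient and bounded quaternionic Hessian, which is ruled out by a quaternionic Liouville-type theorem analogous to the one in \cite{Sze}. Combining the three estimates yields a uniform $C^2$ bound; the operator $F$ is then uniformly elliptic thanks to C2, and since $F$ is concave in $A$ by C1, Evans-Krylov applied on $\R^{4n}$ (in local quaternionic charts) produces the $C^{2,\alpha}$ estimate.

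The main obstacle will be the $C^2$ estimate. Even for the quaternionic Monge-Amp\`ere equation \eqref{qMA}, the analogous second-order bound on a general HKT manifold remains open, precisely because on a non-flat hypercomplex manifold the Obata connection need not be metric, the operators $\partial_{q^r}$ fail to satisfy the Leibniz rule, and commuting derivatives generates curvature and torsion terms with no a priori good sign. Flat hyperk\"ahlerness is the minimal hypothesis under which all such contributions disappear and the Sz\'ekelyhidi computation can be pushed through; carrying out this computation carefully, and in particular handling the perturbation theory of $\lambda_1(A)$ for hyperhermitian (rather than Hermitian) matrices, is where the bulk of the technical work will lie.
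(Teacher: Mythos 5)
Your outline matches the paper's proof essentially step for step: ABP for the $C^0$ bound, a Pogorelov-type estimate on $\lambda_1(A)$ using the two-case dichotomy supplied by the $\mathcal{C}$-subsolution, blow-up to a Liouville-type theorem for the gradient, and Evans--Krylov carried out in real coordinates. The only cosmetic deviation is the auxiliary function: the paper replaces your $\log\lambda_1$ with $2\sqrt{\tilde{\lambda}_1}$ (an Alesker-inspired simplification the authors note explicitly) and perturbs $A$ by a constant diagonal matrix so the largest eigenvalue is smooth, but the role of the flat hyperk\"ahler hypothesis in killing curvature and torsion commutator terms, and the overall architecture of the argument, are exactly as you describe.
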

	
	In the above statement by $ \mathcal{C} $-subsolution of \eqref{eq_main} we mean that 
	\[
	\text{for every $ x\in M $ the set }
	\left( \lambda\left(g^{\bar j r}(\Omega_{\bar j s}+\underline \phi_{\bar j s})\right) +\Gamma_n \right)\cap \partial \Gamma^{h(x)} \text{ is bounded}\,,
	\]
	where for any $ \sigma>\sup_{\partial \Gamma} f $, $ \Gamma^\sigma$ denotes the convex superlevel set $ \Gamma^\sigma=\{ \lambda \in \Gamma \mid f(\lambda)>\sigma \} $.
	
	We remark that the assumption of admitting a flat hyperk\"ahler metric in particular implies that $(M,I,J,K)$ is locally flat. 
	
	\medskip As an application of Theorem \ref{teor_main} we first have the solvability of the quaternionic Hessian equation on hyperhermitian manifolds admitting a flat hyperk\"ahler metric.
	
	Let $(M,I,J,K,g,\Omega_0)$ be a compact hyperhermitian manifold where $\Omega_0$ is the $(2,0)$-form induced by $g$, fix $ 1\leq k \leq n $ and let $\Omega$ be a $q$-real $(2,0)$-form which is $k$-positive in the sense that
	\begin{equation}\label{k-pos}
		\frac{\Omega^{i}\wedge \Omega_0^{n-i}}{\Omega_0^n}>0 \qquad \mbox{ for every }i=1,\dots,k\,. 
	\end{equation}
	Let ${\rm QSH}_{k}(M,\Omega)$  be the set of continuous functions $\varphi$ such that $\Omega_{\varphi}$ is a $k$-positive $q$-real $(2,0)$-form in the sense of currents. Then the {\em quaternionic Hessian equation} is defined as 
	\begin{equation}\label{qh}
		\frac{\Omega^{k}_\varphi \wedge \Omega^{n-k}_0}{\Omega^{n}_0}=b\,{\rm e}^{H},\qquad \varphi\in {\rm QSH}_{k}(M,\Omega)\,,
	\end{equation}
	where $ H\in C^\infty(M,\R) $ is the datum and $ (\phi,b)\in C^\infty (M,\R)\times \R_+ $ is the unknown. Equation \eqref{qh} reduces to the quaternionic Monge-Amp\`{e}re equation for $ k=n $ and to the classical Poisson equation for $ k=1 $. Moreover equation \eqref{qh} is the analogue of the real and complex Hessian equations (see, e.g., \cite{Chou-Wang,CHZ,DK,Hou,Hou-Ma-Wu,Jbilou,Kokarev,Li,SW19,Urbas,Wang,Zhang} and the references therein) in the quaternionic setting. The constant $ b $ is uniquely determined by
	\begin{equation*}
		b=\frac{\int_M \Omega^k_\phi\wedge \Omega^{n-k}_0\wedge \bar \Omega_0^n}{\int_M \mathrm{e}^H \Omega^n_0\wedge \bar \Omega_0^n}\,.
	\end{equation*}
	
	Applying Theorem \ref{teor_main} we solve equation \eqref{qh} on compact flat hyperk\"ahler manifolds:
	
	\begin{teor}\label{teor_Hessian}
		Let $(M,I,J,K,g,\Omega_0)$ be a compact flat hyperk\"ahler manifold and $ \Omega $ a q-real $ k $-positive $ (2,0) $-form. Then the quaternionic Hessian equation
		\[
		\frac{\Omega^{k}_\varphi \wedge \Omega^{n-k}_0}{\Omega^{n}_0}=b\,\mathrm{e}^H\,, \qquad\int_M  \varphi\,\Omega^n_0 \wedge\bar \Omega^n_0=0\,, \qquad \varphi\in {\rm QSH}_{k}(M,\Omega)\,,
		\]
		has a unique smooth solution $(\varphi,b)\in C^\infty(M,\R)\times \R_+$ for every $H\in C^{\infty}(M,\R)$.
	\end{teor}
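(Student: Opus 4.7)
The plan is to deduce Theorem~\ref{teor_Hessian} from Theorem~\ref{teor_main} by the continuity method. Setting $A^r_s=g^{\bar{j}r}\Omega^{\varphi}_{\bar{j}s}$, one has $\Omega_\varphi^k\wedge\Omega_0^{n-k}/\Omega_0^n = c_{n,k}\,\sigma_k(\lambda(A))$ for a combinatorial constant $c_{n,k}>0$; taking
\[
f(\lambda) := \tfrac{1}{k}\log \sigma_k(\lambda)\quad\text{on}\quad \Gamma_k := \{\lambda\in\R^n \mid \sigma_j(\lambda)>0,\ 1\le j\le k\},
\]
equation \eqref{qh} becomes an instance of $F(A)=h$ in \eqref{eq_main} with $h=\tfrac{1}{k}(\log b+H)-\tfrac{1}{k}\log c_{n,k}$. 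The G{\aa}rding--Maclaurin inequalities yield C1 (monotonicity and concavity of $\tfrac{1}{k}\log\sigma_k$ on $\Gamma_k$); C2 holds because $f\to-\infty$ on $\partial\Gamma_k$; C3 is immediate from $f(t\lambda)=\log t+f(\lambda)$. Take $\underline{\varphi}\equiv 0$: the $k$-positivity hypothesis on $\Omega$ ensures $\underline{\lambda}(x):=\lambda(g^{\bar{j}r}\Omega_{\bar{j}s}(x))\in\Gamma_k$. For $k\ge 2$ and any $\mu\in(\underline{\lambda}(x)+\Gamma_n)\cap\partial\Gamma^{h(x)}$, each $\mu_i>\underline{\lambda}_i(x)>0$; if $\mu_i\to\infty$ then $\sigma_k(\mu)$ (which contains the term $\mu_i$ times the $(k-1)$-th elementary symmetric polynomial of the remaining coordinates, itself bounded below by a positive constant depending on $\underline{\lambda}(x)$) tends to infinity, contradicting $f(\mu)=h(x)$. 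Hence the intersection is bounded and $0$ is a $\mathcal{C}$-subsolution. The case $k=1$ reduces to a linear Poisson-type equation on the flat hyperk\"ahler manifold and is handled directly by Fredholm theory.

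For the continuity method, set $h_0:=\log\bigl(\Omega^k\wedge\Omega_0^{n-k}/\Omega_0^n\bigr)$ and, for $t\in[0,1]$, consider
\[
\frac{\Omega_{\varphi_t}^k\wedge\Omega_0^{n-k}}{\Omega_0^n}=b_t\,\mathrm{e}^{tH+(1-t)h_0},\qquad \int_M\varphi_t\,\Omega_0^n\wedge\bar{\Omega}_0^n=0,
\]
with $b_t>0$ determined by integrating against $\bar{\Omega}_0^n$. The pair $(\varphi_0,b_0)=(0,1)$ solves the $t=0$ equation, and $t=1$ recovers \eqref{qh}. Let $T\subset[0,1]$ denote the set of $t$ admitting a smooth $\Gamma_k$-admissible solution. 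Openness follows from the implicit function theorem on the Banach space of mean-zero $C^{2,\alpha}$ functions paired with $\log b\in\R$: the linearization is strictly elliptic with kernel the constants, which are absorbed by the mean-zero constraint and the freedom in $\log b$. For closedness, Theorem~\ref{teor_main} provides a uniform $C^{2,\alpha}$ bound on $\varphi_t-\sup_M\varphi_t$ (later converted back via the integral normalization), while a two-sided bound on $b_t$ follows from evaluating the equation at the extremal points of $\varphi_t$, where $\partial\partial_J\varphi_t$ has definite sign. Standard Schauder bootstrapping on the linearized equation then yields uniform $C^{\ell,\alpha}$ bounds for every $\ell$, so $T=[0,1]$.

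The main subtlety I foresee is the uniformity in $t$ of the $\mathcal{C}$-subsolution condition: the right-hand side of \eqref{eq_main} depends on $b_t$, so one must establish a uniform two-sided bound on $b_t$ before invoking Theorem~\ref{teor_main}; this is resolved by the usual trick of using the subsolution characterization first to obtain a rough $C^0$ estimate and then bounding $b_t$ from the equation. Uniqueness is then standard: given two solutions $(\varphi,b)$ and $(\tilde\varphi,\tilde b)$, at a maximum of $\varphi-\tilde\varphi$ the inequality $\partial\partial_J(\varphi-\tilde\varphi)\le 0$ together with monotonicity of $f$ forces $b\le\tilde b$, and by symmetry $b=\tilde b$; concavity of $f$ then yields a linear homogeneous elliptic inequality for $\varphi-\tilde\varphi$, so the strong maximum principle combined with the integral normalization gives $\varphi=\tilde\varphi$.
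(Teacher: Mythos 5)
Your proposal follows essentially the same route as the paper: cast \eqref{qh} as an instance of \eqref{eq_main} with $f$ a normalization of $\log\sigma_k$ on $\Gamma_k$, check C1--C3, observe that the $k$-positivity of $\Omega$ makes $\underline\varphi\equiv 0$ a $\mathcal C$-subsolution, and run the continuity method (with $b_t$ bounded by maximum-principle comparison, openness by the implicit function theorem with the constant kernel killed by the normalization and the freedom in $\log b$, closedness from Theorem~\ref{teor_main} plus bootstrapping, uniqueness by the maximum principle). One small inaccuracy: in verifying the $\mathcal C$-subsolution condition you write that $\underline\lambda_i(x)>0$ for each $i$, but $k$-positivity with $k<n$ only gives $\underline\lambda(x)\in\Gamma_k$, not $\underline\lambda(x)\in\Gamma_n$, so some components may be negative; the correct justification for your lower bound on $\sigma_{k-1}$ of the remaining entries is that $\underline\lambda|_i\in\Gamma_{k-1}$ (dropping a coordinate lowers the cone index) together with monotonicity of $\sigma_{k-1}$ on $\Gamma_{k-1}$ and componentwise $\mu|_i\geq\underline\lambda|_i$ -- which is precisely the content of the reference to \cite[Remark 8]{Sze} that the paper uses here. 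This does not affect the validity of the rest of the argument.
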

	
	From Theorem \ref{teor_Hessian} we recover as a special case the result of Alesker \cite{Alesker (2013)}, where the quaternionic Monge-Amp\`{e}re equation is solved on compact flat hyperk\"ahler manifolds. We note that during the proof of Theorem \ref{teor_main} the a priori estimates, except for the $C^2$-estimate, are obtained without assuming anything about the closure of $\Omega_0$ and this suggests that it is worth studying the quaternionic Hessian equation on non-HKT hyperhermitian manifolds.
	\medskip
	
	Our second application is the quaternionic  Monge-Amp\`ere equation for $(n-1)$-quaternionic plurisubharmonic functions.
	Let $(M,I,J,K,g,\Omega_0)$ be a compact hyperhermitian manifold and $\Omega_{1}$ be a positive $q$-real $(2,0)$-form. We say that a $C^2$ function $\phi$ on $M$ is $(n-1)$-quaternionic plurisubharmonic with respect to $\Omega_1$ and $\Omega_0$ if the $(2,0)$-form $\Omega_1+\frac{1}{n-1}[(\Delta_g \phi) \Omega_0-\partial \partial_J \phi] $ is pointwise positive, where  $\Delta_g $ is the quaternionic Laplacian with respect to  $g $ (see section \ref{C0} for more details). We also refer to Harvey and Lawson \cite{HL11,HL12} for more general notions of plurisubharmonicity. The {\em quaternionic Monge-Amp\`ere equation for $(n-1)$-quaternionic plurisubharmonic functions} is written as 	
		\begin{equation}\label{n-1 MA}
			\left(\Omega_{1}+\frac{1}{n-1}\big[(\Delta_{g} \phi)\Omega_{0}-\partial\partial_J \phi\big]\right)^{n} = b\,{\rm e}^{H}\Omega_{0}^{n}\,, \qquad \Omega_{1}+\frac{1}{n-1}\big[(\Delta_{g} \phi)\Omega_{0}-\partial\partial_J \phi \big] > 0\,.		
		\end{equation}
	Here the constant $ b $ is uniquely determined by
		\begin{equation*}
			b=\frac{\int_M \big(\Omega_{1}+\frac{1}{n-1}\big[(\Delta_{g} \phi)\Omega_{0}-\partial\partial_J \phi\big]\big)^{n}\wedge \bar \Omega_0^n}{\int_M \mathrm{e}^H \Omega^n_0\wedge \bar \Omega_0^n}\,.
		\end{equation*}
		Equation \eqref{n-1 MA} is the analogue of the complex Monge-Amp\`ere equation for $(n-1)$-plurisubharmonic functions, introduced and studied by Fu-Wang-Wu \cite{FWW10,FWW15}, it is a kind of Monge-Amp\`ere--type equation. More related works can be found in  \cite{CHZ,HZ,TW17,TW19} and the references therein.	
		\begin{teor}\label{teor n-1 MA}
			Let $(M,I,J,K,g,\Omega_0)$ be a compact flat hyperk\"ahler manifold and $ \Omega_{1} $ a q-real positive $ (2,0) $-form. Then there is a unique solution $(\phi,b)\in C^\infty(M,\R)\times \R_+$ to the equation
			\begin{equation}\label{n-1 MA equation 1}
				\begin{cases}
					\ \big(\Omega_{1}+\frac{1}{n-1}\big[(\Delta_{g} \phi)\Omega_{0}-\partial\partial_J \phi\big]\big)^{n} = b\,{\rm e}^{H}\Omega_{0}^{n}\,, \\[2mm]
					\ \Omega_{1}+\frac{1}{n-1}\big[(\Delta_{g} \phi)\Omega_{0}-\partial\partial_J \phi\big] > 0\,, \quad \sup_{M}\phi = 0\,,
				\end{cases}
			\end{equation}
			for every given $H\in C^{\infty}(M,\R)$.
	\end{teor}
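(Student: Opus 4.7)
The strategy is to recast equation \eqref{n-1 MA equation 1} into the framework of \eqref{eq_main}, apply Theorem~\ref{teor_main} to extract uniform $C^{2,\alpha}$ estimates, and close the argument via the continuity method. Set $\tilde{\Omega}(\phi):=\Omega_{1}+\frac{1}{n-1}\bigl[(\Delta_{g}\phi)\Omega_{0}-\partial\partial_J\phi\bigr]$ and let $\tilde{A}(\phi)$ be the associated hyperhermitian endomorphism of $TM$. In matrix form $\tilde{A}(\phi)=g^{-1}\Omega_{1}+P\bigl(g^{-1}\mathrm{Hess}_{\H}\phi\bigr)$, where $P(M):=\frac{1}{n-1}\bigl[(\mathrm{tr}\,M)I-M\bigr]$ is the linear isomorphism on hyperhermitian matrices sending eigenvalues $\mu_{i}$ to $\frac{1}{n-1}\sum_{j\neq i}\mu_{j}$. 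Introduce the symmetric convex open cone
\[
\tilde{\Gamma}:=\bigl\{\lambda\in\R^{n}\,\big|\, \textstyle\sum_{j\neq i}\lambda_{j}>0\ \forall\,i\bigr\}\supset\Gamma_{n},
\]
together with the operator $f(\lambda):=\frac{1}{n}\sum_{i=1}^{n}\log\bigl(\frac{1}{n-1}\sum_{j\neq i}\lambda_{j}\bigr)$ on $\tilde{\Gamma}$. A direct computation confirms C1 (strict monotonicity $\partial_{i}f=\frac{1}{n(n-1)}\sum_{j\neq i}\tilde{\lambda}_{j}^{-1}>0$, concavity as a sum of concave functions), C2 ($f\to-\infty$ at $\partial\tilde{\Gamma}$), and C3 ($f(t\lambda)=\log t+f(\lambda)\to+\infty$). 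With these choices the equation reads $F(\tilde{A}(\phi))=\frac{1}{n}(H+\log b)$.

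Next, I check that $\underline{\phi}\equiv 0$ is a $\mathcal{C}$-subsolution: positivity of $\Omega_{1}$ gives $\lambda(g^{-1}\Omega_{1}(x))\in\Gamma_{n}\subset\tilde{\Gamma}$, while the set $\bigl(\lambda(g^{-1}\Omega_{1}(x))+\Gamma_{n}\bigr)\cap\partial\tilde{\Gamma}^{h(x)}$ is bounded uniformly in $x$ because $f(\lambda_{0}+\mu)\to+\infty$ as $|\mu|\to\infty$ in $\Gamma_{n}$. I then run the continuity method on the family $\tilde{\Omega}(\phi_{t})^{n}=b_{t}\,\mathrm{e}^{tH}\Omega_{0}^{n}$ with $\sup_{M}\phi_{t}=0$, $t\in[0,1]$, starting from the explicit solution $(\phi_{0},b_{0})=\bigl(0,\int_{M}\Omega_{1}^{n}\wedge\bar{\Omega}_{0}^{n}/\int_{M}\Omega_{0}^{n}\wedge\bar{\Omega}_{0}^{n}\bigr)$. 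Openness follows from the implicit function theorem in H\"older spaces: the linearization is a uniformly elliptic second-order operator with trivial kernel on the space of functions of prescribed mean (by the strong maximum principle and Fredholm alternative). Closedness is provided by the $C^{2,\alpha}$ estimate of Theorem~\ref{teor_main}, followed by Schauder bootstrapping to obtain uniform $C^{k,\alpha}$-bounds and pass to a convergent subsequence. Uniqueness of the pair $(\phi,b)$ is obtained by a maximum-principle argument: comparing two solutions at the maximum of $\phi_{1}-\phi_{2}$ and using the monotonicity and strict concavity of $F$ forces $b_{1}=b_{2}$, and then $\phi_{1}=\phi_{2}$ in view of the normalization.

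The main technical obstacle is that $\tilde{A}(\phi)$ depends on $\partial\partial_J\phi$ through the nontrivial linear operator $P$, so it is not literally of the form $g^{-1}(\Omega+\partial\partial_J\phi)$ prescribed by \eqref{eq_main}. One must therefore verify that the proof of Theorem~\ref{teor_main}---and in particular the $\mathcal{C}$-subsolution machinery---extends to the setting in which the quaternionic Hessian appears post-composed with an invertible positive linear endomorphism. Since $P$ preserves both the concavity of $f$ and the ellipticity of the linearization, and since the proof of Theorem~\ref{teor_main} uses only the structural conditions C1--C3 of $f$ together with uniform ellipticity, the arguments carry over with only notational changes. The single hardest estimate is still the $C^{2}$-bound for the quaternionic Laplacian $\Delta_{g}\phi$, and it is precisely here that the flatness of the hyperk\"ahler metric intervenes in an essential way, exactly as foreshadowed in the introduction.
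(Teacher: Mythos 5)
The core algebraic point that makes the reduction to \eqref{eq_main} work is \emph{exact}, not approximate, and you have missed it. Since $P(M)=\tfrac{1}{n-1}\bigl[(\mathrm{tr}\,M)\mathbbm{1}-M\bigr]$ is invertible for $n\geq 2$, with $P^{-1}(N)=(\mathrm{tr}\,N)\mathbbm{1}-(n-1)N$, one has the identity $\tilde{A}(\phi)=P\bigl(g^{-1}(\Omega+\mathrm{Hess}_\H\phi)\bigr)$ where
\[
\Omega:=\Re\bigl(\mathrm{tr}_g\Omega_1\bigr)\Omega_0-(n-1)\Omega_1\,,
\]
i.e.\ the linear operator $P$ hits \emph{only} the Hessian, so it can be pulled out across the sum by replacing the background form $\Omega_1$ with its $P^{-1}$-image $\Omega$. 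Once $\Omega$ is chosen this way the equation is literally $f\bigl(\lambda(g^{\bar jr}(\Omega_{\bar js}+\phi_{\bar js}))\bigr)=H+\log b$ with $f=\log\sigma_n(T)$ and $\Gamma=T^{-1}(\Gamma_n)$, and Theorem~\ref{teor_main} applies directly, no extension needed. In your write-up you instead keep $\tilde A(\phi)=g^{-1}\Omega_1+P(g^{-1}\mathrm{Hess}_\H\phi)$ and assert that the proof of Theorem~\ref{teor_main} ``carries over with only notational changes'' to equations where the Hessian is post-composed with a linear operator; this is a real claim about the $\mathcal{C}$-subsolution machinery, the Laplacian estimate and the blow-up argument that you never verify, and it is unnecessary once the substitution above is made. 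Worse, the displayed reformulation $F(\tilde A(\phi))=\tfrac1n(H+\log b)$ is internally inconsistent: your $f$ already contains $T$, so applying it to the eigenvalues of the \emph{already transformed} matrix $\tilde A(\phi)$ computes $\tfrac1n\log\det P(\tilde A(\phi))$, which is not $\tfrac1n\log\det\tilde A(\phi)$. The matrix that $f\circ\lambda$ should be applied to is $g^{-1}(\Omega+\mathrm{Hess}_\H\phi)$ with the shifted $\Omega$, not $\tilde A(\phi)$.

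The rest of the scheme (continuity method, openness via implicit function theorem, closedness from the $C^{2,\alpha}$ estimate, uniqueness via the maximum principle) matches the paper's, with one small imprecision: the kernel of the adjoint of the linearized operator is spanned by a positive function $\zeta$ that is not a priori constant, so the correct normalization is $\int_M\psi\,\zeta\,\mathrm{dvol}=0$ with respect to a suitable Gauduchon-type volume form, rather than ``functions of prescribed mean''; the paper sets this up carefully using a positive $I$-holomorphic $(2n,0)$-form $\Theta$. This is a technical rather than conceptual point, but it should be stated correctly.
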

	
	From Theorem \ref{teor n-1 MA} we can also obtain Calabi-Yau--type Theorems for quaternionic balanced, quaternionic Gauduchon and quaternionic strongly Gauduchon metrics. We refer the reader to \cite[Table 2]{LW} for the relevant definitions, which are entirely analogous to the complex case.
	
	\begin{cor}\label{cor}
	Let $(M,I,J,K,g,\Omega_0)$ be a compact flat hyperk\"ahler manifold and take a quaternionic balanced (resp. quaternionic Gauduchon, quaternionic strongly Gauduchon) metric with induced $(2,0)$-form $\Omega_2$. Then there is a unique positive constant $b'$ and a unique quaternionic balanced (resp. quaternionic Gauduchon, quaternionic strongly Gauduchon) metric with induced $(2,0)$-form $\tilde \Omega$, such that
	\[
	\tilde \Omega^{n-1}=\Omega_2^{n-1}+\partial \partial_J \phi \wedge \Omega^{n-2}_0\,,
	\]
	for some $\phi \in C^\infty(M,\R)$, and which solves
	\[
	\tilde \Omega^n=b'\, \mathrm{e}^{H'} \Omega^n_0\,,
	\]
	for any given $H'\in C^\infty(M,\R).$
	\end{cor}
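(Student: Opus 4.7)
The plan is to reduce Corollary \ref{cor} to Theorem \ref{teor n-1 MA} via a pointwise algebraic duality à la Michelsohn. The key ingredient is a hyperhermitian Michelsohn isomorphism: at each point of $M$, the map $\mathrm{Mich}\colon \tilde\Omega \mapsto \ast\big(\tilde\Omega^{n-1}/(n-1)!\big)$ is a bijection on positive q-real $(2,0)_I$-forms, where $\ast$ is the Hodge star with respect to $\Omega_0$. In eigenvalue terms, which make sense since every matrix in $\mathrm{Hyp}(n,\H)$ is diagonalisable with real eigenvalues, if $\tilde\Omega$ has eigenvalues $\lambda_1,\dots,\lambda_n$ relative to $\Omega_0$, then $\mathrm{Mich}(\tilde\Omega)$ has eigenvalues $\mu_j = \prod_{i\neq j}\lambda_i$; because $\mu_j = (\prod_i \lambda_i)/\lambda_j$ the assignment is invertible on positive $n$-tuples, and a direct computation yields the key identity
\[
\frac{\tilde\Omega^n}{\Omega_0^n} \;=\; \left(\frac{\mathrm{Mich}(\tilde\Omega)^n}{\Omega_0^n}\right)^{1/(n-1)}.
\]

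With this dictionary in hand, set $\Omega_1 := \mathrm{Mich}(\Omega_2)$, which is a positive q-real $(2,0)$-form, and apply Theorem \ref{teor n-1 MA} with data $\Omega_1$ and $(n-1)H'$. This produces $(\phi, b) \in C^\infty(M,\R) \times \R_+$ satisfying
\[
\eta^n = b\,\mathrm{e}^{(n-1)H'}\,\Omega_0^n\,,\qquad \eta := \Omega_1 + \tfrac{1}{n-1}\big[(\Delta_g\phi)\Omega_0 - \partial\partial_J\phi\big]\,.
\]
Define $\tilde\Omega$ to be the positive q-real $(2,0)$-form with $\mathrm{Mich}(\tilde\Omega) = \eta$, and set $b' := b^{1/(n-1)} \in \R_+$, so that $\tilde\Omega^n = b'\,\mathrm{e}^{H'}\,\Omega_0^n$ by the displayed eigenvalue identity. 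The crucial algebraic check is that $\mathrm{Mich}(\tilde\Omega) = \eta$ is equivalent to the required
\[
\tilde\Omega^{n-1} = \Omega_2^{n-1} + \partial\partial_J\phi \wedge \Omega_0^{n-2}\,.
\]
This follows by dividing the above identity by $(n-1)!$, applying $\ast$, and invoking the Hodge-star identity $\ast\big(\psi \wedge \Omega_0^{n-2}/(n-2)!\big) = (\mathrm{tr}_{\Omega_0}\psi)\Omega_0 - \psi$, valid for any q-real $(2,0)$-form $\psi$ on a flat hyperkähler background (where it reduces to the standard Hermitian identity), with $\psi = \partial\partial_J\phi$ and $\mathrm{tr}_{\Omega_0}\partial\partial_J\phi = \Delta_g\phi$.

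Finally, the closure property for $\tilde\Omega^{n-1}$ is inherited from $\Omega_2^{n-1}$: since $g$ is hyperkähler one has $\partial\Omega_0 = 0$, and together with $\partial^2 = 0$ this yields $\partial(\partial\partial_J\phi \wedge \Omega_0^{n-2}) = 0$, so $\partial\tilde\Omega^{n-1} = \partial\Omega_2^{n-1}$; analogous one-line calculations handle the quaternionic Gauduchon (using $\partial\partial_J$) and strongly Gauduchon cases. Uniqueness of the triple $(\phi, \tilde\Omega, b')$ follows from uniqueness of $\phi$ in Theorem \ref{teor n-1 MA} together with injectivity of $\mathrm{Mich}$. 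The main obstacle is the hyperhermitian Michelsohn bijection and its associated eigenvalue and Hodge-star identities; once these linear-algebraic facts on $\mathrm{Hyp}(n,\H)$ are established — essentially as in the well-known Hermitian case, since local flatness provides quaternionic coordinates in which $\Omega_0$ reduces to the hyperhermitian identity and $\Delta_g$ to the pointwise trace of the quaternionic Hessian — the rest of the argument is bookkeeping.
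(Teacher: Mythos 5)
Your proposal is correct and is essentially the paper's own proof, just repackaged: your map $\mathrm{Mich}(\tilde\Omega)=\tfrac{1}{(n-1)!}\ast\tilde\Omega^{n-1}$, the eigenvalue identity $\mu_j=\prod_{i\neq j}\lambda_i$, and the Hodge-star relation $\tfrac{1}{(n-1)!}*(\partial\partial_J\phi\wedge\Omega_0^{n-2})=\tfrac{1}{n-1}[(\Delta_g\phi)\Omega_0-\partial\partial_J\phi]$ are exactly the paper's equations \eqref{Hodge1}--\eqref{Hodge3} and its bijection between positive $(2,0)$- and $(2n-2,0)$-forms via the $(n-1)$\textsuperscript{th} power. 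The only (minor) addition is that you spell out the one-line $\partial$- and $\partial\partial_J$-closure checks that the paper dismisses as clear.
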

	\medskip
	
	The paper is organized as follows: sections \ref{C0}--\ref{C2a} contain the proof of Theorem \ref{teor_main}, while in the last section we prove Theorems \ref{teor_Hessian} and \ref{teor n-1 MA} and Corollary \ref{cor}.
	
	More precisely, in section \ref{C0} we prove the $C^0 $ a priori estimate for solutions to \eqref{eq_main} by using the Alexandroff-Bakelman-Pucci (ABP) method as in \cite{Sze}. Section \ref{Laplacian} deals with the $ C^0 $-estimate for the quaternionic Laplacian in terms of the gradient. This estimate is obtained by bounding the highest eigenvalue of the matrix $A$ and here is where we use the assumption of having a flat hyperk\"ahler metric. 
	The Laplacian estimate is then used to perform the \emph{blow-up analysis} in section \ref{Blow-up} and reduce the gradient bound to the proof of a {\em Liouville-type theorem}, which is given in section \ref{Liouville}. This yields, in particular, a (non-explicit) bound on the quaternionic Laplacian. Finally in section \ref{C2a} we conclude the proof of Theorem \ref{teor_main} applying an Evans-Krylov type theorem \cite{Evans,Krylov} of which we give two proofs, one in the same spirit of \cite{Tosatti et al.}, the other by following an argument of B{\l}ocki \cite{Blo05} as in Alesker \cite{Alesker (2013)}. \medskip 
	
	
	\noindent {\bf Acknowledgements.} The first author is deeply grateful to his advisor professor Luigi Vezzoni for constant support and many useful observations. He would also like to express his gratitude to Marcin Sroka for useful conversations. The first author is supported by GNSAGA of INdAM. The second author wishes to thank his thesis advisor professor Xi Zhang for his constant supports and encouragements, and he is supported by the National Key R and D Program of China 2020YFA0713100.

	\section{$ C^0 $-estimate}\label{C0}
	The $ C^0 $-estimate for solutions to \eqref{eq_main} is obtained by adapting \cite[Proposition 10]{Sze} to our setting and 
	by using the ABP method. This idea was inspired by the $ C^0 $-bound of B\l{}ocki \cite{Blocki} for the complex Monge-Amp\`{e}re equation.
	
	\medskip
	On a hyperhermitian manifold $(M,I,J,K,g)$, the {\em quaternionic Laplacian} of a real function $\varphi$ is defined by 
	\[
	\Delta_g\phi:=n\frac{\partial \partial_J \phi\wedge \Omega_0^{n-1}}{\Omega_0^n}\,,
	\]
	where $ \Omega_0$ is the $ (2,0) $-form induced by $g$. This is an elliptic second order linear differential operator. Under the assumption of local flatness, by \cite[Lemma 3]{BGV} we have
	\begin{equation*}
		\Delta_g \phi=\Re\,\mathrm{tr}_g(\mathrm{Hess}_\H\phi )=\Re \left(g^{\bar jr} \phi_{\bar j r}\right)\,.
	\end{equation*}
	Consequently, in quaternionic local coordinates, the quaternionic Laplacian is the sum of the eigenvalues of the quaternionic Hessian with respect to $ g $.
	\medskip
	
	Finally, it will be useful to observe that the domain $\Gamma$ of $f$ satisfies
	\begin{equation}\label{cone}
		\Gamma \subseteq \left\{ (\lambda_1,\dots, \lambda_n) \in \R^n \mid \sum_{i=1}^n \lambda_i>0 \right\}\,.
	\end{equation}

	As a preliminary step, we prove an $ L^p $-estimate. From here on, we will always denote with $ C $ a positive constant that only depends on background data and which may change from line to line. 
	\begin{lem}
		Let $ (M,I,J,K,g) $ be a compact locally flat hyperhermitian manifold. If $ \phi $ is a solution to \eqref{eq_main} such that $ \sup_M \phi=0 $, then there exist $ p,C>0 $, depending only on the background data, such that
		\[
		\|\phi\|_{L^p}\leq C\,.
		\]
	\end{lem}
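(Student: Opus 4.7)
The plan is to reduce the claim to a standard supersolution estimate. Starting from the ellipticity cone, I get a one-sided Laplacian bound on $\phi$ and then apply classical $L^p$ theory.

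\smallskip

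\textbf{Step 1 (one-sided bound on the quaternionic Laplacian).} Since $\phi$ is $\Gamma$-admissible, the eigenvalues $\lambda(A)$ of $A=g^{-1}\Omega_\phi$ lie in $\Gamma$. By the inclusion \eqref{cone} this yields $\mathrm{tr}(A)=\sum_i\lambda_i(A)>0$. In the locally flat setting, the quaternionic trace decomposes as
\[
\mathrm{tr}(A)=\Re\bigl(g^{\bar r s}\Omega^\phi_{\bar r s}\bigr)=\Re\bigl(g^{\bar r s}\Omega_{\bar r s}\bigr)+\Delta_g\phi,
\]
so I immediately obtain $\Delta_g\phi\geq -C_1$ on $M$, where $C_1$ depends only on $g$ and $\Omega$.

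\smallskip

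\textbf{Step 2 (reformulation as a nonnegative supersolution).} I set $u:=-\phi$. The normalization $\sup_M\phi=0$ gives $u\geq 0$ with $\inf_M u=0$, and Step~1 gives the linear inequality
\[
\Delta_g u\leq C_1\qquad\text{on }M.
\]
Under local flatness, $\Delta_g$ is, in quaternionic/real coordinates, a uniformly elliptic second order linear operator with smooth coefficients determined by $g$ (its principal symbol being controlled by the positive definite $g$). Hence $u$ is a nonnegative classical supersolution of a uniformly elliptic equation with bounded right-hand side.

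\smallskip

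\textbf{Step 3 ($L^p$ bound by weak Harnack / Green's function).} I cover $M$ by finitely many quaternionic coordinate balls and apply the weak Harnack inequality for nonnegative supersolutions of uniformly elliptic linear operators (Gilbarg--Trudinger): on each ball $B$,
\[
\|u\|_{L^p(B)}\leq C\bigl(\inf_{B'} u+\|(\Delta_g u)^+\|_{L^\infty(B)}\bigr)
\]
for some $p>0$ and $C$ depending only on the ellipticity constants of $\Delta_g$ and on the geometry. Since $\inf_M u=0$ is attained, a standard chaining argument along a finite cover of $M$ propagates the infimum, yielding
\[
\|u\|_{L^p(M)}\leq C\bigl(\inf_M u+C_1\bigr)=C\,C_1,
\]
which is the desired estimate $\|\phi\|_{L^p}\leq C$. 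As an alternative, one may use the Green's function $G$ of $\Delta_g$ directly: writing
\[
\phi(x)-\tfrac{1}{\mathrm{Vol}(M)}\!\int_M\phi\,dV=\int_M G(x,y)\,\Delta_g\phi(y)\,dV(y),
\]
together with $\Delta_g\phi\geq -C_1$ (and $\int_M\Delta_g\phi=0$, which forces $\|\Delta_g\phi\|_{L^1}\leq 2C_1\mathrm{Vol}(M)$), and the weak-type integrability of $G$, one controls $\|\phi-\bar\phi\|_{L^p}$; the bound on $\bar\phi$ is then obtained from $\sup_M\phi=0$.

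\smallskip

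\textbf{Main obstacle.} The content is not in any delicate nonlinear analysis but in making sure the quaternionic Laplacian $\Delta_g$ on a locally flat hyperhermitian manifold genuinely fits into the classical uniformly elliptic framework in real coordinates; once this identification is made, the argument reduces to a well-known supersolution estimate and the nonlinear structure of \eqref{eq_main} enters only through the cone inclusion \eqref{cone}.
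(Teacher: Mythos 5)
Your main argument (Steps 1--3, weak Harnack route) is correct and is in fact one of the two arguments the paper gives for this lemma: the cone inclusion $\Gamma\subseteq\{\sum_i\lambda_i>0\}$ gives $\Delta_g\phi\geq -C$, and the weak Harnack inequality on a finite cover of coordinate balls together with a chaining argument starting from the ball where $\sup_M\phi=0$ is attained gives the $L^p$ bound. The ``alternative'' Green's-function remark at the end of Step~3, however, has a genuine gap as written. You assert $\int_M\Delta_g\phi\,dV=0$ and write a Green representation for $\Delta_g$ with respect to the Riemannian volume, but the quaternionic Laplacian $\Delta_g\phi=n\,\partial\partial_J\phi\wedge\Omega_0^{n-1}/\Omega_0^n$ is not the Riemannian Laplacian and is not self-adjoint with respect to $dV$; its integral against the Riemannian volume does not vanish on a general locally flat hyperhermitian manifold. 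The paper's Green-function argument (following Alesker--Shelukhin) requires first choosing a quaternionic Gauduchon form, i.e.\ a pointwise positive q-real $(2n,0)$-form $\Theta$ with $\partial\partial_J\bigl(\Omega_0^{n-1}\wedge\bar\Theta\bigr)=0$ and $\int_M\Omega_0^n\wedge\bar\Theta=1$; only with respect to the measure $\Omega_0^n\wedge\bar\Theta$ does the quaternionic Laplacian integrate to zero (by Stokes) and admit a non-negative Green function, which then gives the $L^1$ bound. Without that adjustment your alternative formula is simply false. Since your primary argument does not rely on it, the proof as a whole still stands, but the alternative should either be corrected to use the Gauduchon measure or deleted.
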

	\begin{proof}
		From \eqref{cone} we have $ \Re\,\mathrm{tr}_g(\Omega_\phi)>0 $, where $ \Omega_\phi =\Omega+\partial \partial_J\phi $, which in turn translates into a lower bound for the quaternionic Laplacian of $ \phi $:
		\begin{equation}\label{eq_in_C0}
			\Delta_g \phi=\Re\, \mathrm{tr}_g(\Omega_\phi)-\Re\,\mathrm{tr}_g(\Omega) \geq -C\,.
		\end{equation}
		An  $L^1 $-bound for $ \phi $ can now be obtained by using the Green operator as in \cite{Alesker-Shelukhin (2013)}. We give here some details for convenience of the reader. By a quaternionic version of Gauduchon theorem \cite[Proposition 2.2]{Alesker-Shelukhin (2013)},
        there exists a pointwise strictly positive q-real $(2n,0)$-form $\Theta$ (which might not be holomorphic) such that $ \partial \partial_J \left(\Omega^{n-1}_0\wedge\bar{\Theta}\right)=0. $ In addition, we may  normalize $\Theta$ so that $ \int_{M}\Omega^{n}_0\wedge\bar{\Theta}=1.$ By \cite[Lemma 23]{Alesker-Shelukhin (2013)}, the quaternionic Laplacian admits a non-negative Green function $G(p, q)\geq 0$, namely, for each function $u$ of class $C^{2}$ and each point $p\in M$, 
            $$
            -\int_{q\in M}G(p,q)\Delta_gu(q)\,\Omega^{n}_0\wedge \bar{\Theta}=u(p)-\int_{M}u\,\Omega^{n}_0\wedge \bar{\Theta}\,.
            $$
        Choose a point $p\in M$ such that $\phi$ attains its maximum at $p$. Since we assumed $\sup_M\phi=0$ we have
        \begin{equation*}
             \|\phi\|_{L^{1}}= \int_{M}(-\phi)\,\Omega^{n}_0\wedge\bar{\Theta} = -\int_{q\in M}G(p,q)\Delta_g\phi(q)\,\Omega^{n}_0\wedge\bar{\Theta} \leq C\int_{q\in M}G(p,q)\,\Omega^{n}_0\wedge\bar{\Theta}\leq C\,.
        \end{equation*}
		
		Alternatively an $ L^p $-bound can be obtained by using the weak Harnack inequality as follows. Take an open cover of $ M $ made of coordinate balls $ B_{2r_i}(x_i) $ such that $ \{ B_i=B_{r_i}(x_i) \} $ still covers $ M $. Since $ \phi $ is non-positive and it satisfies the elliptic inequality \eqref{eq_in_C0}, the weak Harnack inequality \cite[Theorem 9.22]{GT} implies 
		\[
		\|\phi\|_{L^p(B_i)}=\left( \int_{B_i}(-\phi)^p \right)^{1/p}\leq C\left( \inf_{B_i}(-\phi)+1 \right)
		\]
		where $ p,C>0 $ depend only on the cover and the background metric. Since $ \sup_M \phi=0 $ there is at least one index $ j $ such that $ \inf_{B_j}(-\phi)=-\sup_{B_j}\phi=0 $, and thus $ \|\phi\|_{L^p(B_j)}\leq C $. This bound can be extended to all balls $ B_i $ such that $ B_i\cap B_j\neq \emptyset $, indeed the estimate on $ \|\phi\|_{L^p(B_j)} $ yields an upper bound for $ \inf_{B_i}(-\phi) $ as
		\[
		\inf_{B_i} (-\phi)\leq \inf_{B_i\cap B_j}(-\phi) \leq \frac{1}{\mathrm{Vol}(B_i\cap B_j)^{1/p}}\|\phi\|_{L^p(B_i\cap B_j)}\leq \frac{1}{\mathrm{Vol}(B_i\cap B_j)^{1/p}}\|\phi\|_{L^p(B_j)}\,.
		\]
		We can now reiterate the argument and in a finite number of steps we will have bound $ \|\phi\|_{L^p(B_i)} $ for each $ i $, and thus also $ \|\phi\|_{L^p(M)}. $
	\end{proof}

	\begin{prop}\label{prop_C0}
		Let $ (M,I,J,K,g) $ be a compact locally flat hyperhermitian manifold. If $ \underline \phi,\phi $ are a $ \mathcal{C} $-subsolution and a solution to \eqref{eq_main} respectively, with $ \sup_M \phi=0 $, then there is a constant $ C>0 $, depending only on the background data and the subsolution $ \underline{\phi} $, such that
		\[
		\|\phi\|_{C^0}\leq C\,.
		\]
	\end{prop}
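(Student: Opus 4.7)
The plan is to follow the ABP-based argument of Sz\'ekelyhidi \cite[Proposition 10]{Sze} adapted to the locally flat quaternionic setting. The previous lemma already provides an $L^p$-bound on $\phi$, so it is enough to bound $L:=-\inf_M\phi$ from above: combining a lower bound $\phi\leq -L+C$ on a sufficiently large set with the $L^p$-estimate will yield the uniform bound.

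Let $x_0\in M$ be a minimum point of $\phi$ and fix a quaternionic coordinate chart at $x_0$ whose image contains a Euclidean ball $B=B_{2R}(0)\subset \R^{4n}$ on which $g$, $\Omega$, $\underline{\phi}$ and $h$ are uniformly controlled. I would consider the auxiliary function
\[
v(x):=\phi(x)-\underline{\phi}(x)+\epsilon|x|^2,
\]
with $\epsilon>0$ a small constant to be chosen. Since $v(0)\leq -L+C$ and $v\geq -C$ on $\partial B$, for $L$ large $v$ attains its infimum in the interior. The standard ABP argument on $B\subset\R^{4n}$, obtained by perturbing $v$ with linear functions $-\xi\cdot x$ for $|\xi|\leq cL/R$, then shows that the lower contact set $P\subset B$ of $v$ satisfies
\[
c\,L^{4n}\leq \int_P\det D^2v,
\]
and moreover $\phi(x)\leq -L+C$ at every $x\in P$.

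The crucial step is to derive a pointwise upper bound on $\det D^2v$ on $P$. At $x\in P$ the real Hessian $D^2v(x)$ is positive semidefinite; in locally flat quaternionic coordinates this yields, via the relation between the real and quaternionic Hessians recalled in the introduction, the hyperhermitian matrix inequality
\[
\mathrm{Hess}_\H\phi(x)\geq \mathrm{Hess}_\H\underline{\phi}(x)-C\epsilon\,\mathrm{Id},
\]
so the eigenvalues of $A^\phi=g^{-1}\Omega^\phi$ on $P$ belong to
\[
\bigl(\lambda(A^{\underline{\phi}}(x))+\Gamma_n-C\epsilon(1,\dots,1)\bigr)\cap \partial\Gamma^{h(x)}.
\]
The $\mathcal{C}$-subsolution hypothesis (together with the structural assumption C3) then forces this intersection to be uniformly bounded in $x\in P$, provided $\epsilon$ is small enough. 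Consequently $|D^2\phi|\leq C$ on $P$, hence $\det D^2v\leq C$ on $P$, and the ABP estimate gives $|P|\geq c\,L^{4n}$. Combined with the $L^p$-estimate,
\[
C^p\geq \norma{\phi}_{L^p}^p\geq \int_P(-\phi)^p\geq c\,L^{p+4n},
\]
which forces $L$ to be bounded.

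The main obstacle, compared with the Hermitian case treated in \cite{Sze}, is the passage from the real-Hessian condition $D^2v\geq 0$ on the contact set to a quaternionic-Hessian inequality suitable for invoking the $\mathcal{C}$-subsolution property. Local flatness makes this passage algebraic, but one has to keep track of how the fixed data $g$ and $\Omega$ enter the conversion between the two Hessians (this is where the shift by $C\epsilon$ appears) and ensure that the bounded intersection built into the definition of $\mathcal{C}$-subsolution absorbs this shift. Once this is settled, the rest of the argument is a direct transcription of Sz\'ekelyhidi's method, with $\R^{2n}$ replaced by $\R^{4n}$ throughout the ABP inequality.
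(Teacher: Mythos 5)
Your strategy --- ABP localization at the minimum of $\phi$, the $\mathcal{C}$-subsolution to bound the quaternionic Hessian on the lower contact set $P$, and the $L^p$-estimate --- matches the paper's, which follows \cite[Proposition 10]{Sze}. But the ABP step as you state it has a genuine gap. You claim $v\geq -C$ on $\partial B$ and deduce a dip of order $L$, hence $cL^{4n}\leq\int_P\det D^2v$ by taking perturbing slopes up to $cL/R$. This is not justified: $\phi(0)=-L$ is the global minimum, but $\phi\vert_{\partial B}$ can lie anywhere in $[-L,0]$, and in the worst case $\phi\vert_{\partial B}\approx -L$, so the only guaranteed dip is $\inf_{\partial B}v-v(0)\geq \epsilon$ coming from the $\epsilon|x|^2$ term. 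The normalization $\sup_M\phi=0$ places the zero of $\phi$ somewhere on $M$, not on $\partial B$. An internal warning sign: your claimed bound $cL^{4n}\leq C\,\mathrm{Vol}(P)\leq C\,\mathrm{Vol}(B)$ would already cap $L$ with no appeal to the $L^p$-estimate, which cannot be right. The correct ABP inequality is $c\,\epsilon^{4n}\leq\int_P\det D^2v$, with the contact set defined via slopes at most $\epsilon/2$; this gives $\mathrm{Vol}(P)\geq c>0$ (a fixed constant), and it is precisely the slope restriction $|Dv|<\epsilon/2$ that yields $v<S+\epsilon/2$ on $P$ --- a step you assert but which would fail with slopes of size $cL/R$. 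With these corrections the conclusion $|S|\leq C$ then follows from the $L^p$-bound as in the paper.

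One smaller point: the $\mathcal{C}$-subsolution argument on $P$ bounds $\mathrm{Hess}_\H\phi$, not the full real Hessian $D^2\phi$. To get $\det D^2v\leq C$ on $P$, the paper invokes the Moore-determinant inequality $\det(D^2v)\leq 2^{4n}\det(\mathrm{Hess}_\H v)^4$; alternatively one can argue that on $P$ we have $D^2v\geq 0$, and the bound on $\mathrm{Hess}_\H v$ controls $\mathrm{tr}\,\mathrm{Hess}_\H v$, which equals one quarter of the Euclidean Laplacian $\Delta v$ and hence dominates every eigenvalue of the positive semidefinite matrix $D^2v$. Also, C3 plays no role in this step; the bounded intersection is part of the definition of $\mathcal{C}$-subsolution.
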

	\begin{proof}
		Without loss of generality we may assume that $ \underline{\phi}\equiv 0 $, otherwise we could modify $ \Omega $ to simplify the equation. Since $ \sup_M \phi=0 $, we only need to bound $ S=\inf_M \phi $ from below. For convenience, we may assume $S\leq -1$, otherwise we are done.
		
		Since $ \underline{\phi} $ is a $ \mathcal{C} $-subsolution there exist $ \delta,R>0 $ such that
		\begin{equation}
			\label{eq_in_C0_2}
			\left( \lambda\left( g^{\bar j r}\Omega_{\bar j s}\right)-\delta {\bf 1}+\Gamma_n \right)\cap \partial  \Gamma^{h(x)}\subseteq B_R(0)\,, \qquad \text{at every }x\in M\,,
		\end{equation}
		where $ {\bf 1}=(1,1,\dots,1) $.
		
		Consider quaternionic local coordinates $ (q^1,\dots,q^{n}) $ centered at the point where $ \phi $ attains its minimum $ S $. We may identify such coordinate neighborhood with the open ball of unit radius $ B_1=B_1(0)\subseteq \H^{n} $ centered at the origin. Let $ v(x)=\phi(x)+\epsilon|x|^2 $ be defined on $ B_1 $ for some small fixed $ \epsilon>0 $. Observe that $ \inf_{B_1} v=v(0)=\phi(0)=S $ and $ \inf_{\partial B_1}v\geq v(0)+\epsilon $. These conditions allow us to apply the ABP method (see \cite[Proposition 10]{Sze}) to obtain
		\begin{equation}
			\label{eq_in_C0_3}
			C_0\epsilon^{4n}\leq \int_P \det(D^2v)\,,
		\end{equation}
		where $ C_0>0 $ is a dimensional constant,
		\[
		P=\left\{ x\in B_1\mid |Dv(x)|<\frac{\epsilon}{2},\, v(y)\geq v(x)+Dv(x)\cdot (y-x) \text{ for all }y\in B_1 \right\}\,,
		\]
		and $ Dv $, $ D^2v $ are the gradient and the (real) Hessian of $ v $. Note that $ P\subseteq \{ x\in B_1 \mid D^2v(x)\geq 0  \} $ and since convexity implies quaternionic plurisubharmonicity (see e.g. \cite{Alesker (2003)}), at any point $ x\in P $ we have $ \mathrm{Hess}_\H v(x)\geq 0 $. Therefore $ \mathrm{Hess}_\H \phi(x)\geq -\frac{\epsilon}{2} \mathbbm{1} $, where $ \mathbbm{1} $ is the $n \times n $ identity matrix. Choosing $ \epsilon $ small enough depending on $ g $ and $ \delta $, we have
		\[
		\lambda\left( g^{\bar j r}(\Omega_{\bar j s}+\phi_{\bar j s})\right)\in \lambda \left( g^{\bar j r}\Omega_{\bar j s} \right)-\delta {\bf 1}+\Gamma_n\,, \qquad \text{at every }x\in P\,.
		\]
		On the other hand, equation \eqref{eq_main} also gives
		\[
		\lambda\left( g^{\bar j r}(\Omega_{\bar j s}+\phi_{\bar j s})\right)\in \partial \Gamma^{h(x)}\,, \qquad \text{at every }x\in P\,.
		\]
		These two facts, together with \eqref{eq_in_C0_2} imply $ |\phi_{\bar r s}|\leq C $ on $ P $ and thus also $ v_{\bar r s} \leq C $. Combining a calculation in \cite{Blocki} with \cite[Lemma 2]{Sro}, or alternatively using directly a computation in the proof of \cite[Proposition 2.1]{Alesker-Shelukhin (2013)}, at any point $ x\in P $ we have
		\[
		\det(D^2v) \leq 2^{4n}\det(\mathrm{Hess}_\H(v))^4\,,
		\]
		where, on the right-hand side, ``$ \det $'' denotes the Moore determinant, introduced in \cite{Moore} (see also e.g. \cite{Alesker (2003),Aslaksen,SThesis}). Therefore, from \eqref{eq_in_C0_3} we see that
		\[
		C_0\epsilon^{4n}\leq C \mathrm{Vol}(P)\,.
		\]
		The definition of $ P $ entails that $ v(0)\geq v(x)-Dv(x)\cdot x>v(x)-\epsilon/2 $, i.e. $ v(x)<S+\epsilon/2<0 $ for all $ x\in P $. As a consequence for any $ p>0 $
		\[
		\|v\|^{p}_{L^p(M)}\geq \|v\|^{p}_{L^p(P)}=\int_P (-v)^p \geq 
		\left \lvert S+\frac{\epsilon}{2}\right \rvert^p\mathrm{Vol}(P)\,.
		\]
		From the previous lemma we know that there is a $ p>0 $ such that $ \|v\|_{L^p} $ is bounded, therefore also $ S=\inf_M \phi $ must be bounded.
	\end{proof}
	
	\section{Laplacian estimate}\label{Laplacian}
	This section is devoted to derive a $ C^0 $-estimate for the quaternionic Laplacian of solutions to \eqref{eq_main} in terms of the squared norm of the gradient. This step is the most involved in terms of calculations and it is here that we use our strongest assumptions to have a locally flat hypercomplex structure and a hyperk\"ahler metric compatible with it. 
	
	We follow Székelyhidi \cite{Sze} and Hou-Ma-Wu \cite{Hou-Ma-Wu}, which in turn is based on an idea of Chou and Wang \cite{Chou-Wang} for the real Hessian equation. Our restrictive assumptions simplify quite a bit the computations.
	\medskip
	
	As declared in the introduction, let $ F(A)=f(\lambda(A)) $ be a symmetric function of the eigenvalues of $ A_{rs}=g^{\bar j r}\Omega^\phi_{\bar j s}=g^{\bar j r}(\Omega_{\bar j s}+\phi_{\bar j s}) $. We denote the derivatives of $ F $ by
	\[F^{rs}=\frac{\partial F}{\partial A_{rs}},\qquad F^{rs,lt}=\frac{\partial^{2} F}{\partial A_{rs}\partial A_{lt}}.\]
	Let $Q_{rs}$ be the standard quaternionic coordinates on $\H^{n,n}$ and let $E_{rs}^p$ be the real coordinates underlying $Q_{rs}$, i.e. $Q_{rs}=E_{rs}^0+E_{rs}^1i+E_{rs}^2j+E_{rs}^3k$. We have the following:

	\begin{lem}
	The linearization of $F$ at $\phi$ is the operator
	\[
	    L(\psi)=\Re \, \sum_{r,s=1}^{n}F^{rs}g^{\bar j r} \psi_{\bar j s}\,.
	\]
	\end{lem}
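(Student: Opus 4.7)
The plan is a direct chain-rule computation. First I would differentiate the defining relation
\[
A_{rs}(\phi + t\psi) = g^{\bar j r}(\Omega_{\bar j s} + \phi_{\bar j s} + t\psi_{\bar j s})
\]
with respect to $t$ at $t=0$, where $\psi \in C^\infty(M,\R)$ is an arbitrary real perturbation. By the linearity of the quaternionic Hessian in $\psi$, this yields
\[
\dot A_{rs} := \frac{d}{dt}\Big|_{t=0} A_{rs}(\phi + t\psi) = g^{\bar j r} \psi_{\bar j s}.
\]
Since $(\psi_{\bar j s})$ is a hyperhermitian matrix whenever $\psi$ is real-valued, $\dot A$ is an admissible tangent direction at $A(\phi)$ inside the space of $g$-hyperhermitian endomorphisms, which is exactly the space on which $F$ is defined as a smooth symmetric function of the real eigenvalues.

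Next I would apply the chain rule to $F(A(\phi+t\psi))$ at $t=0$. The only real subtlety is the interpretation of the symbol $F^{rs}=\partial F/\partial A_{rs}$: because the entries $A_{rs}$ are quaternionic, this partial derivative must be unpacked through the underlying real coordinates $E^p_{rs}$ of the entry $A_{rs} = \sum_p E^p_{rs}\,e_p$, in exact parallel with the way the operators $\partial_{q^r}$ and $\partial_{\bar q^r}$ are built from the $\partial_{x_p^r}$. With the natural convention $F^{rs}=\sum_{p=0}^3 (\partial F/\partial E^p_{rs})\,\bar e_p$, the real Fréchet derivative of $F$ at $A$ is represented by the real-part pairing
\[
dF_A(\dot A) = \Re \sum_{r,s=1}^{n} F^{rs}\,\dot A_{rs}.
\]
This identity reduces to the orthogonality relation $\Re(\bar e_p e_q) = \delta_{pq}$ of the standard quaternionic basis, the operator $\Re$ being precisely what absorbs the non-commutativity of the quaternionic product.

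Substituting the expression for $\dot A_{rs}$ into the pairing then gives
\[
L(\psi) = \frac{d}{dt}\Big|_{t=0} F(A(\phi+t\psi)) = \Re \sum_{r,s=1}^{n} F^{rs}\, g^{\bar j r}\psi_{\bar j s},
\]
which is the claimed formula. I do not expect any serious obstacle here: once the quaternionic conventions for $F^{rs}$ are pinned down, the lemma is really a one-line chain-rule calculation, and the only delicate point is the bookkeeping of the real-part pairing, which is entirely parallel to the complex Hermitian trace convention used in \cite{Sze}.
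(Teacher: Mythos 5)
Your proof is correct and follows the same route as the paper's: differentiate $A$ along a one-parameter family $\phi+t\psi$, apply the chain rule in the real coordinates $E^p_{rs}$ underlying the quaternionic entries, and rewrite the resulting sum over $p$ as a real-part pairing via $\Re(\bar e_p e_q)=\delta_{pq}$. The only cosmetic difference is that the paper differentiates along an arbitrary $\Gamma$-admissible curve $\varphi(t)$ with $\varphi(0)=\phi$, $\varphi'(0)=\psi$, whereas you specialize to the affine curve $\phi+t\psi$; you are also slightly more explicit about the convention $F^{rs}=\sum_p(\partial F/\partial E^p_{rs})\bar e_p$, which the paper leaves implicit.
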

	\begin{proof}
	With respect to the real coordinates $E_{rs}^p$ we decompose a matrix $A\in \H^{n,n}$ as $A^{rs}_pE_{rs}^p$. Define the derivatives $F^{rs}_p:=\frac{\partial F}{\partial A^{rs}_p}$ and the matrix $H=(F^{rs})$. For a curve of hyperhermitian matrices $A_t$ with respect to $g$ we have
	\[
	\frac{d}{dt} F(A_t)=\sum_{r,s=}^n \sum_{p=0}^3 F^{rs}_p(A_t)(A'_t)^{rs}_p=\Re\, F^{rs}(A_t)(A'_t)_{rs}
	\]
	Now, for each $\psi\in C^{2}(M,\R)$ and $t\in (-\epsilon, \epsilon)$, let $\varphi(t) $ be a curve of $\Gamma$-admissible functions in $C^{2}(M,\R)$ with $\varphi(0)=\phi$ and $\phi'(0)=\psi$ and set $A_t=g^{-1} \left( \Omega+ \mathrm{Hess}_\H \phi(t) \right)$, then
	\[
        L(\psi)=\frac{d}{dt}F(A_t) \big \vert_{t=0}= \Re\, F^{rs}(A_0) (A'_0)_{rs}=\Re \sum_{r,s=1}^n F^{rs}( A_0) g^{\bar j r}\psi_{\bar j s}\,. \qedhere
	\]
	\end{proof}

	In order to prove the desired bound we will need the following preliminary lemma. 
	
	\begin{lem}\label{lem_pre_C2}
		Let $ \sup_{\partial \Gamma}f<a<b<\sup_{\Gamma} f $ and $ \delta, R>0 $. Then there exists a constant $ \kappa>0 $ such that for any $ \sigma \in [a,b] $, $ B\in \mathrm{Hyp}(n, \H) $ satisfying
		\[
		\left( \lambda(B)-2\delta {\bf 1}+\Gamma_n \right)\cap \partial \Gamma^\sigma \subseteq B_R(0)\,,
		\]
		$A\in \mathrm{Hyp}(n,\H) $ satisfying $ \lambda(A)\in \partial \Gamma^\sigma $ and $ |\lambda(A)|>R $, we have 
		\begin{align*}
			&\mbox{either }&&  \Re\, F^{rs}(A)\left( B_{rs}-A_{rs} \right)>\kappa \sum_{r=1}^n F^{rr}(A)\,,\\
			&\mbox{or }&& F^{ss}(A)>\kappa \sum_{r=1}^n F^{rr}(A)\,, \qquad \text{for all } s=1,\dots,n\,.
		\end{align*}
	\end{lem}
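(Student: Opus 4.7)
\bigskip
\noindent\textbf{Proof plan.} The strategy is to reduce the statement, which concerns hyperhermitian matrices $A,B$, to the analogous inequality for real eigenvalue vectors, so that the core argument becomes the convex-geometric lemma already present in the Hermitian setting \cite[Lemma 9]{Sze}. The hyperhermitian aspect enters only through diagonalization and a Schur--Horn majorization.

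First, I would exploit the spectral theorem for hyperhermitian matrices: by a symplectic unitary change of frame one can write $A=\mathrm{diag}(\lambda_1(A),\ldots,\lambda_n(A))$ with $\lambda_1(A)\le\cdots\le\lambda_n(A)$. By the symmetry of $F$ in the eigenvalues, the matrix $(F^{rs}(A))$ is diagonal in this frame with entries $f_r:=\partial_{\lambda_r}f(\lambda(A))$, and concavity plus symmetry of $f$ force the ordering $f_1\ge f_2\ge\cdots\ge f_n>0$. In particular the second alternative of the lemma, which asks $F^{ss}(A)>\kappa\sum_r F^{rr}(A)$ for every $s$, is equivalent to $f_n>\kappa(f_1+\cdots+f_n)$.

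Next, in the same frame the diagonal entries of the hyperhermitian matrix $B$ are real, so
\[
\Re\sum_{r,s}F^{rs}(A)\,B_{rs}=\sum_r f_r\,B_{rr}.
\]
By the Schur--Horn theorem for hyperhermitian matrices (which follows from the real spectral theorem in the same way as in the complex case), the vector $(B_{11},\ldots,B_{nn})$ lies in the convex hull of permutations of $\lambda(B)$; combined with the rearrangement inequality and the monotonicity $f_1\ge\cdots\ge f_n\ge 0$ this gives
\[
\sum_r f_r\,B_{rr}\;\ge\;\sum_r f_r\,\mu_r,\qquad \mu:=\lambda^{\uparrow}(B).
\]
Since the hypothesis $(\lambda(B)-2\delta\mathbf{1}+\Gamma_n)\cap\partial\Gamma^\sigma\subseteq B_R(0)$ is permutation-invariant in $\lambda(B)$, it transfers verbatim to $\mu$. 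It therefore suffices to prove the purely real-valued statement: under these hypotheses, either $\sum_r f_r(\mu_r-\lambda_r(A))>\kappa\sum_r f_r$ or $f_n>\kappa\sum_r f_r$.

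Finally, this last eigenvalue inequality is exactly \cite[Lemma 9]{Sze}: its proof uses only the convexity of $\Gamma^\sigma$ (from concavity of $f$), the fact that $(f_1,\ldots,f_n)$ is an outer normal to $\partial\Gamma^\sigma$ at $\lambda(A)$, and a compactness/contradiction argument against the hypotheses C1--C3 (normalizing $\sum_r f_r(\lambda_k)=1$ along a putative counterexample sequence $\sigma_k\to\sigma_\infty\in[a,b]$ and passing to a limit). The main obstacle is pinpointing where the quaternionic structure creates genuine difficulty as opposed to cosmetic changes; once the reduction via diagonalization and Schur--Horn is carried out cleanly, the hypercomplex content of the lemma disappears and Székelyhidi's real argument applies without modification, with $\kappa$ depending only on $a,b,\delta,R$ and on $f$.
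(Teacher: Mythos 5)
Your proposal matches the paper's proof: the authors likewise reduce the lemma to Sz\'ekelyhidi's Proposition~6 by first establishing a quaternionic Schur--Horn theorem, which is exactly the reduction you carry out via diagonalization, Schur--Horn majorization, and the rearrangement inequality. The one quibble is the final citation: the statement you appeal to is Sz\'ekelyhidi's Proposition~6 (or its internal eigenvalue version), not his Lemma~9, which concerns preliminary properties of the superlevel cones.
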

	\begin{proof}
		The lemma follows from the very same argument as \cite[Proposition 6]{Sze} once we have proved a quaternionic analogue of the Schur-Horn theorem.
	\end{proof}
	
	\begin{lem}[Quaternionic Schur-Horn Theorem]
		Let $ \mu=(\mu_1,\dots,\mu_n),\lambda=(\lambda_1,\dots,\lambda_n) \in \R^n $ be such that $ \mu_1\geq \cdots \geq \mu_n $ and $ \lambda_1\geq \cdots \geq \lambda_n $. There exists a hyperhermitian matrix $ B $ with diagonal $ \mu $ and eigenvalues $ \lambda $ if and only if
		\begin{equation}\label{Schur-Horn}
			\sum_{i=1}^j \mu_i\leq \sum_{i=1}^j\lambda_i\,, \qquad \text{for all }j=1,\dots,n\, \qquad \text{and}\qquad \sum_{i=1}^n\mu_i=\sum_{i=1}^n \lambda_i\,.
		\end{equation}
	\end{lem}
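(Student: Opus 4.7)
The plan is to reduce both directions to the classical (real) Schur-Horn theorem. I would treat necessity first, then sufficiency.

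For the necessity, I would invoke the spectral theorem for hyperhermitian matrices: any $B\in\mathrm{Hyp}(n,\H)$ can be written as $B=U\Lambda U^*$, where $U$ is quaternionic unitary (i.e.\ $U^*U=UU^*=\mathbbm{1}$) and $\Lambda=\mathrm{diag}(\lambda_1,\dots,\lambda_n)$ collects the real eigenvalues. Reading off the diagonal yields
\[
\mu_i \;=\; B_{ii} \;=\; \sum_{k=1}^n U_{ik}\,\lambda_k\,\bar U_{ik} \;=\; \sum_{k=1}^n |U_{ik}|^2\,\lambda_k,
\]
where the last equality uses that each $\lambda_k\in\R$ commutes with every quaternion. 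The matrix $P=(|U_{ik}|^2)_{i,k}$ has non-negative entries, and the unitarity relations $\sum_k U_{ik}\bar U_{jk}=\delta_{ij}$, $\sum_i \bar U_{ik}U_{il}=\delta_{kl}$ force both its row sums and its column sums to equal $1$, so $P$ is doubly stochastic. By Birkhoff's theorem, $\mu=P\lambda$ lies in the convex hull of the permutations of $\lambda$; by the Hardy-Littlewood-P\'olya/Rado characterization of majorization this is exactly the set of inequalities \eqref{Schur-Horn}.

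For the sufficiency, I would invoke the classical real Schur-Horn theorem, which produces from the data \eqref{Schur-Horn} a real symmetric matrix $B\in\R^{n,n}$ with diagonal $\mu$ and ordinary eigenvalues $\lambda$. Since $B$ is real and symmetric, $B^*=\bar B^{\,T}=B^T=B$, so $B$ is in particular hyperhermitian; and its real eigenvalues in the sense used in the paper (the distinguished real representative in each right-eigenvalue conjugacy class) coincide with its ordinary real eigenvalues. Hence $B$ is the matrix we need, and no genuinely quaternionic construction is required.

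The only input outside the paper's own framework is the quaternionic spectral theorem, which is classical in the theory of quaternionic linear algebra; apart from this, the necessity argument lifts the real/complex proof verbatim because the doubly stochastic structure of $(|U_{ik}|^2)$ survives the passage from $\C$ to $\H$ unchanged, while the sufficiency direction is essentially free since the classical construction already takes place inside $\R^{n,n}\subseteq \mathrm{Hyp}(n,\H)$. I do not anticipate a serious obstacle; the one point to verify carefully is that real scalars commute past the quaternionic entries of $U$ in the diagonal computation, which is automatic.
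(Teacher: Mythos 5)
Your proof is correct, and for necessity it is essentially the same as the paper's: diagonalize $B=U\Lambda U^*$ with $U\in\mathrm{Sp}(n)$, read off $\mu=P\lambda$ with $P=(\lvert U_{ik}\rvert^2)$ doubly stochastic, and invoke Birkhoff together with the Hardy--Littlewood--P\'olya characterization of majorization. For sufficiency you diverge from the paper, and your route is cleaner. The paper tries to run sufficiency through the same doubly-stochastic picture, asserting in effect that every $\mu$ in the convex hull of the permutations of $\lambda$ can be realized as $T\lambda$ with $T=(\lvert c_{rs}\rvert^2)$ for some $C\in\mathrm{Sp}(n)$; as stated this glosses over the fact that not every doubly stochastic matrix arises from a (symplectic) unitary in this way, so some additional construction (e.g.\ by $T$-transforms, as in the usual proof of Schur--Horn) is implicitly being invoked. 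Your argument sidesteps this entirely by calling the classical real Schur--Horn theorem to produce a real symmetric $B$, noting that $\R^{n,n}\cap\mathrm{Sym}\subseteq\mathrm{Hyp}(n,\H)$ and that the quaternionic eigenvalues of a real symmetric matrix are just its ordinary ones. This buys you a self-contained sufficiency step that costs nothing beyond the classical theorem (which the paper's argument ultimately depends on anyway), at the price of treating the two directions asymmetrically rather than via one equivalence chain.
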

	\begin{proof}
		A hyperhermitian matrix $ B $ satisfies the assumptions of the lemma if and only if there exists $ C\in \mathrm{Sp}(n) $ such that $ B=C^*DC $ where $ D $ is the diagonal matrix with diagonal $ \lambda $. In particular $ \mu $ is the diagonal of $ B $ if and only if $ \mu=T\lambda $ where $ T=(|c_{rs}|^2) $. Since $  C\in \mathrm{Sp}(n) $, the matrix $ T $ is doubly stochastic. By the Birkhoff theorem \cite{Birkhoff} $\mu=T\lambda$, where $T$ is doubly stochastic, if and only if $T$ lies in the convex hull of the set of all permutation matrices. In other words $ B $ exists if and only if $ \mu $ lies in the convex hull of the vectors obtained by permuting the entries of $ \lambda $, which is known to be equivalent to \eqref{Schur-Horn} (see e.g. \cite[Theorem 46]{HLP}).
	\end{proof}

	\begin{prop} \label{prop_C2-bound}
		Let $ (M,I,J,K,g) $ be a compact flat hyperk\"ahler manifold. If $ \underline \phi,\phi $ are a $ \mathcal{C} $-subsolution and a solution to \eqref{eq_main} respectively, then there is a constant $ C>0 $, depending only on $ (M,I,J,K) $, $ \|g\|_{C^2} $, $ \|h\|_{C^2} $, $ \|\Omega \|_{C^2} $, $ \|\phi \|_{C^0} $ and $ \underline{\phi} $, such that
		\[
		\|\Delta_g \phi \|_{C^0}\leq C\left( \|\nabla \phi \|_{C^0}^2+1 \right)\,.
		\]
Here $\nabla$ denotes the Obata connection on $M$. 
	\end{prop}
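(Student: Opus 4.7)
The lower bound is immediate: by~\eqref{cone} the eigenvalues of $A$ have positive sum, so $\Delta_g\phi=\Re\,\mathrm{tr}_g\Omega_\phi-\Re\,\mathrm{tr}_g\Omega\geq -C$, and one only needs an upper bound for the largest eigenvalue $\lambda_1$ of $A$. The flat hyperk\"ahler hypothesis lets one work in local quaternionic coordinates in which $g_{\bar rs}=\delta_{rs}$ and the Obata connection reduces to componentwise partial differentiation; in particular mixed derivatives commute freely on scalars and no curvature terms appear. This is exactly the feature absent from a general HKT background, and is what motivates the strengthened hypothesis at this step.

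I plan to adapt the Hou--Ma--Wu / Sz\'ekelyhidi test-function argument. Consider the quantity
$$
Q=\log\lambda_1+\eta(|\nabla\phi|^2)+\psi(\phi-\underline\phi),
$$
where $\eta(t)=-\tfrac12\log(2K-t)$ with $K=1+\sup_M|\nabla\phi|^2$, and $\psi$ is a suitable convex function with large constants to be calibrated later. At an interior maximum $x_0$ of $Q$, perturb $A$ slightly so that $\lambda_1$ is smooth near $x_0$, pick a quaternionic unitary frame diagonalizing $A(x_0)$ with $A_{11}=\lambda_1\geq A_{22}\geq\cdots\geq A_{nn}$, and apply to $Q$ the linearized operator $L=\Re\sum_{r,s}F^{rs}\partial_{\bar q^r}\partial_{q^s}$. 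Differentiating $F(A)=h$ once and twice at $x_0$, using the concavity of $f$ (so that $F^{rs,lt}A_{rs,p}\overline{A_{lt,p}}\leq 0$) together with the quaternionic eigenvalue perturbation
$$
\lambda_{1,p\bar p}\geq A_{11,p\bar p}+\sum_{r>1}\frac{|A_{1r,p}|^2+|A_{r1,p}|^2}{\lambda_1-\lambda_r},
$$
one rewrites the inequality $0\geq L(Q)(x_0)$ as a sum of a good third-order contribution from $L(\log\lambda_1)$, a bad third-order contribution proportional to $\eta'\sum F^{rr}\Re(\phi_p\phi_{\bar p\bar rr})$ coming from $L(\eta(|\nabla\phi|^2))$, a coercive term $\psi''|\nabla\phi|^2\sum F^{rr}$, and a first-order term $\psi'L(\phi-\underline\phi)$.

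The two alternatives of Lemma~\ref{lem_pre_C2}, applied with $B=g^{-1}\Omega+\mathrm{Hess}_{\H}\underline\phi$ at $x_0$, now drive the conclusion. If $F^{ss}\geq\kappa\sum_r F^{rr}$ for every $s$, the bad third-order term is absorbed by the good one through the Cauchy--Schwarz splitting of indices into the direction $r=1$ and its orthogonal complement, exactly as in~\cite{Hou-Ma-Wu,Sze}. In the complementary case $\Re\sum F^{rs}(\underline\phi_{\bar rs}-\phi_{\bar rs})\geq\kappa\sum F^{rr}$, the term $\psi'L(\phi-\underline\phi)$ contributes a negative multiple $-c\,\psi'\sum F^{rr}$ large enough to dominate the residual errors once $\psi'$ is chosen sufficiently large.

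The main obstacle is the clean bookkeeping of the third-order estimate, and in particular ensuring that the Hou--Ma--Wu Cauchy--Schwarz trick still produces a coefficient $1-\varepsilon$ in front of the good term after the real-versus-imaginary decomposition of the quaternionic quantities $A_{1r,p}$. Because the metric is flat hyperk\"ahler every commutator $[\partial_{q^r},\partial_{q^s}]$ and $[\partial_{\bar q^r},\partial_{q^s}]$ vanishes on scalars, so this reduces to purely algebraic manipulations of hyperhermitian matrix derivatives without curvature corrections. Calibrating the constants in $\eta$ and $\psi$ so that the residual terms are dominated either by $\psi''|\nabla\phi|^2\sum F^{rr}$ or by the $\kappa$-term of Lemma~\ref{lem_pre_C2}, one deduces $\lambda_1(x_0)\leq C(|\nabla\phi|^2(x_0)+1)$, and the maximality of $Q$ propagates this bound to every point of $M$, yielding the claimed estimate $\|\Delta_g\phi\|_{C^0}\leq C(\|\nabla\phi\|_{C^0}^2+1)$.
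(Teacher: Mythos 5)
Your proposal follows the same overall framework as the paper's proof (a maximum-principle argument with a test function combining a function of the largest eigenvalue~$\lambda_1$ of~$A$, a concave function of~$|\nabla\phi|^2$, and a convex function of the potential, together with the dichotomy of Lemma~\ref{lem_pre_C2}), but it differs in the choice of the first term. You take the classical Hou--Ma--Wu/Sz\'ekelyhidi ansatz $\log\lambda_1$, whereas the paper instead uses $2\sqrt{\tilde\lambda_1}$, a substitution borrowed from Alesker's treatment of the quaternionic Monge--Amp\`ere equation. The paper even calls this out in a Remark: the square root ``allows to simplify the argument,'' because it removes the need for the delicate Cauchy--Schwarz splitting of the third-order term into the $r=1$ direction and its complement that you flag as ``the main obstacle.'' Concretely, after applying $L$ to the test function the paper arrives at $0\geq\bigl(4\epsilon\sqrt{\lambda_1}-1\bigr)\tfrac{F^{aa}|\Omega_{\bar 11,a}^{\phi}|^2}{2\lambda_1\sqrt{\lambda_1}}+\text{(good terms)}-C\mathcal F$, so once $\lambda_1$ is assumed moderately large the bad third-order contribution is simply nonnegative and no index-splitting is needed; the case analysis from Lemma~\ref{lem_pre_C2} is then carried out cleanly on the remaining inequality $0\geq 4\beta' F^{aa}\phi_{\bar aa}+\tfrac{2}{N}F^{aa}\lambda_a^2-C\mathcal F$. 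Your version with $\log\lambda_1$ can be made to work (indeed it is what Sz\'ekelyhidi and Hou--Ma--Wu do in the complex case, and the flat hyperk\"ahler hypothesis gives you normal coordinates with $g_{\bar rs}\equiv\delta_{rs}$, so no curvature or commutator terms appear, as you correctly note), but it forces you through precisely the splitting you are uneasy about; the quaternionic analogue goes through because the eigenvalue-perturbation formula $\lambda_{1,p\bar p}=A_{11,p\bar p}+2\sum_{r>1}|A_{r1,p}|^2/(\lambda_1-\lambda_r)$ has the same form as in the Hermitian case once $A_{r1}=\overline{A_{1r}}$ is used. In short: your route is the more traditional one and is viable, but it requires the extra bookkeeping the paper deliberately sidesteps; the two proofs use the same $\mathcal{C}$-subsolution dichotomy, the same gradient/potential auxiliary functions (your $\eta$ coincides with their $\alpha$ up to an additive constant, and normalising $\underline\phi\equiv 0$ absorbs the difference between $\psi(\phi-\underline\phi)$ and~$\beta(\phi)$), and lead to the same bound.
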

Let us remark that as pointed out by Alesker \cite[pp. 204]{Alesker (2013)}, $M$ admitting a flat hyperk\"ahler metric $g$ compatible with the hypercomplex structure implies that $g$ is  parallel with respect to the Obata connection, therefore the Obata connection and the Levi-Civita connection coincide.

	We observe that at a point where $A$ is diagonal with distinct eigenvalues we have 
	\begin{itemize}
		\itemsep0.2em
		\item $ \lambda_{i}^{rs}:=\frac{\partial \lambda_i}{\partial A_{rs}}=\delta_{ir}\delta_{is} $,
		\item $ \lambda_{i}^{rs,tl}:= \frac{\partial^2 \lambda_i}{\partial A_{rs}\partial A_{tl}}=(1-\delta_{ir})\frac{\delta_{is}\delta_{it}\delta_{rl}}{\lambda_i-\lambda_r}+(1-\delta_{it})\frac{\delta_{il}\delta_{ir}\delta_{st}}{\lambda_i-\lambda_t} $
	\end{itemize}
	(see e.g. \cite{Gerhardt,Spruck}).
	Furthermore, since $ F(A)=f(\lambda(A)) $ for $ f $ symmetric, then $ F^{rs}=\delta_{rs}f_r $, and
	since $ f $ is  concave and satisfies $ f_i>0 $ (assumption C1 in the introduction), then $ F $ is concave and $ \frac{f_r-f_s}{\lambda_r-\lambda_s}\leq 0 $. In particular $ f_r\geq f_s $ anytime $ \lambda_r\leq \lambda_s $. Finally, we  observe that by \cite[Lemma 9 (b)]{Sze} for any fixed $ x\in M $  there is a constant $ \tau>0 $ depending on $ h(x) $ such that
	\begin{equation}\label{F}
		\sum_{a=1}^n F^{aa}(x)>\tau>0\,.
	\end{equation}
	We will mainly be interested in the largest eigenvalue $ \lambda_1 $ of the matrix $ A $ around some fixed point $ x_0 $. As pointed out by Sz\'{e}kelyhidi \cite{Sze} in order for $ \lambda_1\colon M\to \R $ to define a smooth function at $ x_0 $ we need the eigenvalues to be distinct; to be sure of that, we perturb the matrix $ A $.
	
	At any fixed point $x_0\in M$ we can perturb $A$ in order to have a matrix with distinct eigenvalues. Indeed, fix quaternionic local coordinates around the point $ x_0 $ such that, at $ x_0 $, $ A $ is diagonal and its eigenvalues satisfy
	\begin{equation}\label{eq_eigenvalues}
		\lambda_1\geq \lambda_2\geq \dots \geq \lambda_n\,;
	\end{equation}
	take a constant diagonal matrix $ D $ whose entries satisfy
	\[
	0=D_{11}<D_{22}<\dots<D_{nn}\,.
	\]
	The matrix $ \tilde A=A-D $ has, at $ x_0 $, the eigenvalues
	\[
	\tilde \lambda_1=\lambda_1\,, \qquad \tilde
	\lambda_i=\lambda_i-D_{ii}\,, \text{ for } i=2,\dots,n\,,
	\]
	which are distinct by construction.
	
	We will make use of the linearized operator $ L $ defined by $ L(u)=4\Re \sum_{a,b=1}^nF^{ab}g^{\bar c a}u_{\bar c b} $, where $ u_{\bar c b}=\frac{1}{4}\partial_{\bar q^c}\partial_{q^b}u $.
	First of all, we prove the following inequality for $ L\left(2\sqrt{\tilde \lambda_1}\right) $.

	\begin{lem}\label{lem_pre_C2_2}
		With respect to quaternionic local coordinates around $x_0$ such that $ (g_{\bar rs}) $ is the identity at $x_0$ and  $(\Omega^{\phi}_{\bar rs})$ is diagonal at $x_0$, we have 
		\[
		L\left(2\sqrt{\tilde \lambda_1}\right)\geq -\frac{F^{aa}|\Omega^\phi_{\bar 1 1,a}|^2}{2\lambda_1\sqrt{\lambda_1}}-\frac{C\mathcal{F}}{\sqrt{\lambda_1}}\,,
		\]
		where $ \mathcal{F}=\sum_{a=1}^nF^{aa}(x_0) $, $ \Omega^\phi_{\bar 1 1,a}=\partial_{q^a}\Omega^\phi_{\bar 1 1} $ and $ C>0 $ is a positive constant depending only on $ (M,I,J,K) $, $ \|\Omega\|_{C^2} $ and $ \|h\|_{C^2} $.
	\end{lem}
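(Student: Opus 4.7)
The plan is to apply the linearization $L$ to $u=2\sqrt{\tilde\lambda_1}$ via the chain rule, expand using the first and second variation formulas for the simple eigenvalue $\tilde\lambda_1$, and then bound the resulting expression by differentiating the equation $F(A)=h$ twice and invoking concavity of $F$.

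Since $M$ admits a flat hyperk\"ahler metric, by the remark after Proposition~\ref{prop_C2-bound} the Levi--Civita and Obata connections coincide and have zero curvature; hence about $x_0$ one may choose quaternionic coordinates in which $g_{\bar r s}\equiv \delta_{rs}$ on a whole neighborhood. In these flat coordinates the operators $\partial_{q^a},\partial_{\bar q^b}$ commute pairwise, and a direct computation shows that for any real-valued $v$
\[
\partial_{\bar q^a}\partial_{q^a}v\;=\;\sum_{p=0}^{3}\partial_{x^a_p}^{2}v\,,
\]
so that at $x_0$ the operator $L$ reduces to the real elliptic operator $L(v)=\sum_a F^{aa}\sum_p\partial_{x^a_p}^{2}v$. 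Applied to the smooth composition $u=2\sqrt{\tilde\lambda_1}$ (smooth near $x_0$ because the $\tilde\lambda_i$ are distinct there), the usual chain rule for $2\sqrt{\cdot}$ gives
\[
L\bigl(2\sqrt{\tilde\lambda_1}\bigr)\;=\;\frac{L(\tilde\lambda_1)}{\sqrt{\tilde\lambda_1}}\;-\;\frac{1}{2\tilde\lambda_1\sqrt{\tilde\lambda_1}}\sum_a F^{aa}\bigl|\partial_{q^a}\tilde\lambda_1\bigr|^{2}\,.
\]

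The first variation formula $(\tilde\lambda_1)^{rs}=\delta_{r1}\delta_{s1}$, together with $D$ constant and $g=\mathrm{Id}$, gives $\partial_{q^a}\tilde\lambda_1=\partial_{q^a}A_{11}=\Omega^\phi_{\bar 1 1,a}$ at $x_0$ and $\tilde\lambda_1(x_0)=\lambda_1$, so the gradient term already matches the statement. The second variation formula for the simple eigenvalue $\tilde\lambda_1$ then yields
\[
\sum_p\partial_{x^a_p}^{2}\tilde\lambda_1\;=\;\sum_p\partial_{x^a_p}^{2}A_{11}\;+\;2\sum_{r>1}\frac{\sum_p\bigl|\partial_{x^a_p}\tilde A_{1r}\bigr|^{2}}{\tilde\lambda_1-\tilde\lambda_r}\,,
\]
and since $\tilde\lambda_1>\tilde\lambda_r$ and $F^{aa}>0$ the second sum contributes non-negatively; dropping it gives $L(\tilde\lambda_1)\geq L(A_{11})=L(\Omega_{\bar 1 1})+L(\phi_{\bar 1 1})$, with $|L(\Omega_{\bar 1 1})|\leq C\mathcal{F}$ controlled by $\|\Omega\|_{C^{2}}$.

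To handle $L(\phi_{\bar 1 1})$ I use commutativity of the quaternionic partials on flat $\H^{n}$ to exchange $\partial_{\bar q^a}\partial_{q^a}$ with $\partial_{\bar q^1}\partial_{q^1}$; combined with $\phi_{\bar a a}=A_{aa}-\Omega_{\bar a a}$, this reduces $L(\phi_{\bar 1 1})$ to $\sum_a F^{aa}\sum_p\partial_{x^1_p}^{2}A_{aa}$ modulo an error bounded by $C\mathcal{F}$. To bound this remaining quantity from below I differentiate $F(A)=h$ twice in $x^1_p$ and sum over $p=0,\ldots,3$: concavity of $F$ as a function of the real components of $A$ -- which is equivalent to concavity of $f$ in $\lambda$, via the quaternionic Schur--Horn theorem proved above -- kills the quadratic $F^{rs,tl}$-term, and at the diagonal point $x_0$ only $F^{aa}$ survives in the linear part, yielding $\sum_a F^{aa}\sum_p\partial_{x^1_p}^{2}A_{aa}\geq \sum_p\partial_{x^1_p}^{2}h\geq -C$. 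Assembling these estimates and absorbing the additive constant into $C\mathcal{F}$ via the uniform positive lower bound $\mathcal{F}\geq\tau>0$ from \eqref{F} produces the claimed inequality.

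The main obstacle I foresee is the quaternionic bookkeeping: one must carefully verify the first and second variation formulas for the eigenvalues of a hyperhermitian matrix in real-variable form, and the equivalence of concavity of $f$ in $\lambda$ with concavity of $F$ in the real components of $A$. Both reduce to the analogous Hermitian statements after diagonalizing by $\mathrm{Sp}(n)$, but some care is required because of the non-commutativity of the matrix entries.
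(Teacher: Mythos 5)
Your proposal follows essentially the same route as the paper's proof: at $x_0$ reduce $L$ to $\sum_a F^{aa}\sum_p\partial_{x^a_p}^2$, apply the chain rule to $2\sqrt{\tilde\lambda_1}$, use the first and second variation formulas for the simple top eigenvalue of $\tilde A$ and drop the non-negative cross term, commute the quaternionic partials to convert $\sum_p\partial^2_{x^a_p}\Omega^\phi_{\bar 1 1}$ into $\sum_p\partial^2_{x^1_p}\Omega^\phi_{\bar a a}$ (up to an $\|\Omega\|_{C^2}$-controlled error), and finally differentiate $F(A)=h$ twice in the $q^1$-directions and use concavity of $F$ to kill the quadratic term, absorbing the residual constant into $C\mathcal{F}$ via the lower bound $\mathcal{F}\geq\tau>0$. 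The only cosmetic differences are that you split $A_{11}=\Omega_{\bar 1 1}+\phi_{\bar 1 1}$ one step earlier than the paper does, and you invoke the quaternionic Schur--Horn theorem to justify concavity of $F$ from concavity of $f$ (which the paper states without comment), but the substance is identical.
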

	\begin{proof}
		We have for the perturbed matrix $ \tilde A_{rs}=A_{rs}-D_{rr}\delta_{rs}=g^{\bar j r}\Omega^\phi_{\bar j s}-D_{rr}\delta_{rs} $ at the point $ x_0 $ where $ (g_{\bar rs}) $ is the identity and $ A $ (and thus $ (F^{rs}) $) is diagonal
		\begin{equation}\label{eq_pre_C2_2}
			L\left(2\sqrt{\tilde \lambda_1}\right)=8\Re\, F^{ab}\left(\sqrt{\tilde \lambda_1}\right)_{\bar ab}=2F^{aa}\sum_{p=0}^3\left(\sqrt{\tilde \lambda_1}\right)_{x^a_px^a_p}=F^{aa}\sum_{p=0}^3\left( \frac{\tilde \lambda_{1,x^a_px^a_p}}{\sqrt{\lambda_1}}-\frac{\tilde \lambda_{1,x^a_p}^2}{2\lambda_1\sqrt{\lambda_1}} \right)\,,
		\end{equation}
		where the subscript $ x^a_p $ denotes the real derivative with respect to the corresponding coordinate. Using the formulas for the derivatives of the eigenvalues we obtain
		at $ x_0 $
		\begin{align*}
			\tilde
			\lambda_{1,x^a_p}&=\tilde
			\lambda_1^{rs}\tilde A_{rs,x^a_p}=\Omega^\phi_{\bar 1 1, x^a_p}\\
			%
			\tilde
			\lambda_{1,x^a_px^a_p}&=\tilde \lambda_{1}^{rs,lt} \tilde A_{rs,x^a_p}\tilde A_{lt,x^a_p}+\tilde
			\lambda_{1}^{rs}\tilde A_{rs,x^a_px^a_p}=\sum_{r>1}\frac{\tilde A_{r1,x^a_p}\tilde A_{1r,x^a_p}+\tilde A_{1r,x^a_p}\tilde A_{r1,x^a_p}}{\lambda_1-\tilde \lambda_r}+\Omega^\phi_{\bar 1 1, x^a_px^a_p}\\
			&=\sum_{r>1}\frac{A_{r1,x^a_p}A_{1r,x^a_p}+A_{1r,x^a_p}A_{r1,x^a_p}}{\lambda_1-\tilde \lambda_r}+g^{\bar j 1}\Omega^\phi_{\bar j 1, x^a_px^a_p}= 2 \sum_{r>1}\frac{|\Omega^\phi_{\bar r1,x^a_p}|^2}{\lambda_1-\tilde \lambda_r}+\Omega^\phi_{\bar 1 1, x^a_px^a_p}\,,
		\end{align*}
		where we used that the derivatives of $ D $ vanish because it is a constant matrix.
		
		Differentiating the equation $ F(A)=h $ twice with respect to $ x^1_p $ gives, at the point $ x_0 $,
		\begin{equation}\label{eq_pre_C2}
			\Re\, F^{rs,tl}\Omega^\phi_{\bar sr,x^1_p}\Omega^\phi_{\bar lt,x^1_p}+F^{rr} \Omega^\phi_{\bar r r,x^1_px^1_p}=h_{x^1_px^1_p}\,.
		\end{equation}
		We observe that
		\[
		\sum_{p=0}^3\Omega^\phi_{\bar 1 1, x^a_px^a_p}=\sum_{p=0}^3\left(\Omega_{\bar 1 1, x^a_px^a_p}+\phi_{\bar 1 1 x^a_px^a_p}\right)=4\Omega_{\bar 1 1, \bar a a}+4\phi_{\bar a a \bar 1 1}=4\Omega_{\bar 1 1, \bar a a}-4\Omega_{\bar a a, \bar 1 1}+\sum_{p=0}^3\Omega^\phi_{\bar a a, x^1_p x^1_p}
		\]
		and thus, by \eqref{eq_pre_C2} and \eqref{F} 
		\[
		F^{aa}\sum_{p=0}^3\tilde
		\lambda_{1,x^a_px^a_p}\geq 
		F^{aa}\sum_{p=0}^3\Omega^\phi_{\bar 1 1, x^a_px^a_p}
		\geq - \Re\,F^{rs,tl}\sum_{p=0}^3\Omega^\phi_{\bar r s,x^1_p}\Omega^\phi_{\bar t l,x^1_p}-C\mathcal{F}
		\geq -C\mathcal{F}
		\]
		where we also used the concavity of $ F $. Finally from \eqref{eq_pre_C2_2} we have the desired inequality
		\[
		L\left(2\sqrt{\tilde \lambda_1}\right)\geq -\frac{F^{aa}\sum_{p=0}^3(\Omega^\phi_{\bar 1 1,x^a_p})^2}{2\lambda_1\sqrt{\lambda_1}}-\frac{C\mathcal{F}}{\sqrt{\lambda_1}}\,. \qedhere
		\]
	\end{proof}

	\begin{proof}[Proof of Proposition $\ref{prop_C2-bound}$]
		We have already seen that the Laplacian is bounded from below, as a consequence of \eqref{cone}, therefore it is enough to obtain a bound of the form
		\[
		\frac{\lambda_1}{\|\nabla \phi\|^2_{C^0}+1}\leq C\,.
		\]
		
		Define the function
		\[
		G=2\sqrt{\tilde \lambda_1}+\alpha(|\nabla \phi|^2)+\beta(\phi)\,,
		\]
		where 
		\begin{align*}
			\alpha(t)&=-\frac{1}{2}\log \left( 1- \frac{t}{2N} \right)\,,& N&= \|\nabla \phi\|^2_{C^0}+1\,,\\
			\beta(t)&=-2St+\frac{1}{2}t^2\,,& S&>\|\phi\|_{C^0}\,, \text{ large constant to be chosen later}\,,
		\end{align*}
		and $ \tilde \lambda_1 $ is, as before, the highest eigenvalue of the perturbed matrix $ \tilde A $ around a point $ x_0 $, which we choose to be a maximum point of $ G $. The derivative of the functions $ \alpha $ and $ \beta $ satisfy
		\begin{align}\label{eq_alpha}
			\frac{1}{4N}<&\alpha'(|\nabla \phi|^2)<\frac{1}{2N}\,, & \alpha''&=2(\alpha')^2\,,\\
			\label{eq_beta}
			S\leq& -\beta'(\phi)\leq 3S\,, & \beta''&=1\,.
		\end{align}
		
		At $ x_0 $ we have $ L(G)\leq 0 $. Choose quaternionic local coordinates such that $(g_{\bar r s})$ is the identity in the whole neighborhood of $x_0$  and $ (\Omega^{\phi}_{\bar rs}) $ is diagonal at $x_0$.  This 
		is possible because we are assuming $g$  hyperk\"ahler and flat. Then
		\begin{equation}\label{eq_in_C2}
			0\geq 4\Re\, F^{ab}G_{\bar ab}=4F^{aa}G_{\bar a a}=F^{aa}\sum_{p=0}^3G_{x^a_p x^a_p}\,,
		\end{equation}
		because $ F^{ab} $ is diagonal at $ x_0 $. We compute the derivatives of $ G $ at $ x_0 $:
		\begin{align*}
			0=G_{x^a_p}=&\left(2\sqrt{\tilde
				\lambda_1}\right)_{x^a_p}+\alpha'\sum_{r=1}^n(\phi_{\bar r x^a_p}\phi_{r}+\phi_{\bar r}\phi_{rx^a_p})+\beta'\phi_{x^a_p}\,,\\
			G_{x^a_px^a_p}=&\left(2\sqrt{\tilde \lambda_1}\right)_{x^a_px^a_p}+\alpha''\left(\sum_{r=1}^n(\phi_{\bar r x^a_p}\phi_{r}+\phi_{\bar r}\phi_{rx^a_p})\right)^2\\
			&+\alpha'\sum_{r=1}^n(\phi_{\bar rx^a_px^a_p}\phi_{r}+2|\phi_{r x^a_p}|^2+\phi_{\bar r}\phi_{rx^a_px^a_p})+\beta''\phi_{x^a_p}^2+\beta'\phi_{x^a_px^a_p}\,.
		\end{align*}
		Differentiating the equation $ F(A)=h $ yields
		\[
		F^{aa}\Omega^\phi_{\bar aa,x^r_p}=h_{x^r_p}\,, \qquad \text{at }x_0.
		\]
		Using this, Cauchy-Schwarz inequality and \eqref{eq_alpha} we have
		\[
		\begin{split}
			\alpha'F^{aa}\sum_{r=1}^n(\phi_{\bar r \bar a a}\phi_r+\phi_{\bar r}\phi_{r\bar a a})&=\alpha'F^{aa}\sum_{r=1}^n(\phi_{\bar aa\bar r}\phi_r+\phi_{\bar r}\phi_{\bar a ar})\\
			&=\alpha'\sum_{r=1}^n\left((h_{\bar r}-F^{aa}\Omega_{\bar aa, \bar r})\phi_r+\phi_{\bar r}(h_{r}-F^{aa}\Omega_{\bar aa, r})\right)\\
			&\geq -\frac{C}{N}(N^{1/2}+N^{1/2}\mathcal{F})\geq -C\mathcal{F}\,,
		\end{split}
		\]
		where we used \eqref{F} to absorb the constants into $ C\mathcal{F} $. Again using \eqref{eq_alpha} we also obtain
		\[
		\begin{split}
			2\alpha'F^{aa}\sum_{r=1}^n\sum_{p=0}^3|\phi_{r x^a_p}|^2&\geq  \frac{1}{2N}F^{aa}\sum_{r=1}^n\sum_{p,q=0}^3\phi_{x^r_q x^a_p}^2\geq \frac{1}{2N}F^{aa}\sum_{p=0}^3\phi_{x^a_p x^a_p}^2=\frac{8}{N}F^{aa}\phi_{\bar a a}^2\\
			&=\frac{8}{N}F^{aa}(\lambda_a-\Omega_{\bar aa})^2\geq \frac{2}{N}F^{aa}\lambda_a^2-C\mathcal{F}\,,
		\end{split}
		\]
		where, for the last inequality we used that $ (a+b)^2\geq \frac{1}{2}a^2-b^2 $. 
		Thanks to the last two inequalities, from our main inequality \eqref{eq_in_C2} we get
		\begin{equation}\label{eq_main_C2}
			0\geq  L\left(2\sqrt{\tilde \lambda_1}\right)+\alpha''F^{aa}\sum_{p=0}^3\left(2\sum_{r=1}^n\Re(\phi_{\bar r x^a_p}\phi_{r})\right)^2+\beta''F^{aa}|\phi_{a}|^2+4\beta'F^{aa}\phi_{\bar aa}+\frac{2F^{aa}\lambda_a^2}{N}-C\mathcal{F}\,.
		\end{equation}
		By $ G_{x^a_p}(x_0)=0 $ we have
		\begin{equation}\label{eq_in_C2_5}
			\begin{split}
				\alpha'' F^{aa}\left(2\sum_{r=1}^n\Re(\phi_{\bar r x^a_p}\phi_{r})\right)^2&=2 F^{aa}\left( \frac{\Omega^\phi_{\bar 1 1,x^a_p}}{\sqrt{\lambda_1}}+\beta'\phi_{x^a_p}\right)^2\\
				&\geq 2\epsilon \frac{F^{aa}(\Omega^\phi_{\bar 1 1,x^a_p})^2}{\lambda_1}-\frac{2\epsilon }{1-\epsilon}(\beta')^2F^{aa}\phi_{x^a_p}^2\,,
			\end{split}
		\end{equation}
		where we used the inequality $ (a+b)^2\geq \epsilon a^2-\frac{\epsilon}{1-\epsilon}b^2\,, $ which holds for $ \epsilon\in (0,1) $. Summing \eqref{eq_in_C2_5} over $ p $ and combining it with Lemma \ref{lem_pre_C2_2} we obtain from \eqref{eq_main_C2}
		\begin{equation}\label{eq_in_C2_10}
			\begin{split}
				0\geq & \left(4\epsilon\sqrt{\lambda_1}-1\right)\frac{F^{aa}|\Omega^\phi_{\bar 1 1,a}|^2}{2\lambda_1\sqrt{\lambda_1}}+\left(\beta''-\frac{2\epsilon (\beta')^2}{1-\epsilon}\right)F^{aa}|\phi_{a}|^2+4 \beta'F^{aa}\phi_{\bar aa}+\frac{2F^{aa}\lambda_a^2}{N}-C\mathcal{F}\,.
			\end{split}
		\end{equation}
		Choosing $ \epsilon=1/(18S^2+1)<1 $, \eqref{eq_beta} implies
		\[
		\beta''-\frac{2\epsilon}{1-\epsilon}(\beta')^2\geq 0\,. 
		\]
		Furthermore, we can assume without loss of generality  $ \sqrt{\lambda_1}>\frac{1}{4\epsilon} $ and deduce 
		\[
		\left(4\epsilon\sqrt{\lambda_1}-1\right)\frac{F^{aa}|\Omega^\phi_{\bar 1 1,a}|^2}{2\lambda_1\sqrt{\lambda_1}} \geq 0\,.
		\]
		Then we obtain from \eqref{eq_in_C2_10}
		\begin{equation}\label{eq_in_C2_7}
			0\geq 4\beta'F^{aa}\phi_{\bar aa}+\frac{2F^{aa}\lambda_a^2}{N}-C\mathcal{F}\,.
		\end{equation}
		
		As before, we can assume $ \underline{\phi}\equiv 0 $, otherwise we could choose a suitable background form $ \Omega $ in order to simplify the equation. Set $ B_{rs}=g^{\bar j r}\Omega_{\bar j s} $ and let $ \delta,R>0 $ be such that
		\[
		\left( \lambda(B)-2\delta {\bf 1}+\Gamma_n \right)\cap \partial \Gamma^{h(x)}\subseteq B_R(0)\,, \qquad \text{at every }x\in M\,,
		\]
		which exist because of the definition of $ \mathcal{C} $-subsolution. Supposing $ \lambda_1>R $ we have $ |\lambda(A)|>R $ and we can then apply Lemma \ref{lem_pre_C2} according to which there exists $ \kappa>0 $ such that one of the following two cases occur:
		\begin{itemize}
			\item
			First case: 
			$$
		 \Re\,F^{rs}(A)(B_{rs}-A_{rs})=- \Re\sum_{r,s=1}^nF^{rs}(A)g^{\bar j r}\phi_{\bar j s}>\kappa \sum_{r=1}^n F^{rr}(A)\,,
			$$
			i.e. $ -F^{aa}\phi_{\bar a a}>\kappa \mathcal{F} $ at $ x_0 $, which for a choice of $ S $ large enough implies $ 4\beta'F^{aa}\phi_{\bar a a}-C\mathcal{F}\geq 0 $ allowing us to deduce from \eqref{eq_in_C2_7} $ 0\geq \frac{2}{N}F^{aa}\lambda_a^2 $ which is a contradiction.
			
			\vspace{0.1cm}
			\item Second case:
			\begin{equation*}
			F^{ss}(A)>\kappa \sum_{r=1}^n F^{rr}(A)\,, \qquad \text{for all } s=1,\dots,n\,,
			\end{equation*}
			and in particular $ F^{11}>\kappa \mathcal{F} $. Therefore $ F^{aa}\lambda_a^2\geq F^{11}\lambda_1^2 \geq \kappa \mathcal{F}\lambda_1^2 $. Moreover, we can assume $ F^{aa}\lambda_a\leq F^{aa}\lambda_a^2/(12NS) $ for otherwise we would have $ \kappa \mathcal{F}\lambda_1^2<12NS\mathcal{F}\lambda_1 $ and we would conclude. Then we have
			\[
			4\beta' F^{aa}\phi_{\bar aa}\geq -12SF^{aa}\lambda_a-C\mathcal{F}\geq-\frac{F^{aa}\lambda_a^2}{N} -C\mathcal{F}\,.
			\]
			Substituting this last inequality into \eqref{eq_in_C2_7} we get
			\[
			0\geq \kappa\frac{\lambda_1^2}{N^2}-C\,.
			\]
		\end{itemize}
		This gives the bound we were searching for at the maximum point $ x_0 $ of $ G $, but by monotony of the square root such bound holds globally, depending additionally on a bound for $ \|\phi\|_{C^0} $.
	\end{proof}
	
	\begin{oss}
		Removing the hypothesis that the metric $ g $ is hyperk\"ahler one has to deal with its derivatives. Most of the terms are not an issue and can be easily controlled, however those terms that contain the third derivative of $ \phi $ seem not to be straightforwardly manageable.
	\end{oss}
	
	\begin{oss}
		The function $ G $ used in the proof of Proposition \ref{prop_C2-bound} is basically the same as the one used in \cite{Sze}, however we replaced the logarithm with the square root, a trick which is inspired by the work of Alesker \cite{Alesker (2013)}. It seems that using the square root allows to simplify the argument.
	\end{oss}
	
	\begin{oss}
	Under an additional assumption the Laplacian can be controlled linearly by the gradient. Indeed, if we further assume
\begin{equation}\label{assump}
    F^{aa}\lambda_{a}\leq c_{0}\,,
\end{equation}
which is the case for the quaternionic Monge-Amp\`ere, the quaternionic Hessian, and the quaternionic Monge-Amp\`ere equation for $(n-1)$-quaternionic plurisubharmonic functions, we obtain the following sharper estimate in the second case above, more precisely, from \eqref{eq_in_C2_7}, $ F^{11}>\kappa \mathcal{F} $ and \eqref{assump} we get
	\begin{equation*}\label{eq_in_C2_8}
	\begin{split}
	    	0\geq &\, 4\beta'F^{aa}(\lambda_{a}-1)+\frac{2F^{11}\lambda_1^2}{N}-C\mathcal{F}\geq 4\beta'F^{aa}\lambda_{a}+\frac{2\kappa\lambda_1^2}{N}\mathcal{F}+\left(-4\beta'-C\right)\mathcal{F}\\
	    	\geq &\, 4\beta'c_{0}+\frac{2\kappa\lambda_1^2}{N}\mathcal{F}+\left(-4\beta'-C\right)\mathcal{F}\geq  \frac{2\kappa\lambda_1^2}{N}\mathcal{F}+\left(-4\beta'-C+\frac{4\beta'c_{0}}{\tau}\right)\mathcal{F},
	\end{split}
		\end{equation*}		
where we have used $\mathcal{F}\geq \tau>0$ in the last inequality. Then we have
	\[
		0\geq 2\kappa\frac{\lambda_1^2}{N}-\left(4\beta'+C-\frac{4\beta'c_{0}}{\tau}\right)\,,
	\]
which gives a sharper bound 
	\[
    	\lambda_{1}\leq C(1+\|\nabla \varphi\|_{C^{0}})\,.
	\]
\end{oss}

	\section{Blow-up analysis}\label{Blow-up}
	In this section we show that a bound for the gradient of solutions to \eqref{eq_main} can be obtained by using a Liouville-type theorem. We adapt the approach of Dinew and Ko\l{}odziej \cite{DK} to our setting.
	\medskip
	
	We introduce the following:
	\begin{defn}
		A continuous function $ u \colon \H^n \to \R $ is a \emph{(viscosity) $ \Gamma $-subsolution (resp. supersolution)} if for all $ \psi\colon \H^n \to \R $ of class $ C^2 $ such that $ u-\psi $ has a local maximum (resp. minimum) at $ p $, we have $ \lambda({\rm Hess}_\H \psi )\in \bar \Gamma $ (resp. $ \lambda({\rm Hess}_\H \psi )\in \R^n\setminus \Gamma $) at $ p $. We say that $ u $ is a \emph{(viscosity) $ \Gamma $-solution} if it is both a subsolution and a supersolution.
	\end{defn}
	
	We show that if the gradient bound for solutions to \eqref{eq_main} does not hold, we are able to find a bounded $ C^{1,\alpha} $ viscosity $ \Gamma $-solution $ u\colon \H^n \to \R $ with bounded gradient and such that $ |\nabla u(0)|=1 $. In particular $ u $ is non-constant. In the next section we prove a Liouville-type theorem for this kind of functions, thus yielding a contradiction and showing implicitly that the gradient bound holds.

	\medskip 
	
	Let $ (M,I,J,K,g) $ be a compact hyperhermitian manifold. Consider a sequence $ (\underline \phi_j)_j $, $ (\phi_j)_j $, $ (h_j)_j $ of real smooth functions
	on $ M $ and  a sequence $(\Omega_j)_j $  of q-real $ (2,0) $-forms on $M$  such that $\underline \phi_j $ are $ \mathcal{C} $-subsolutions and $ \phi_j $, $ h_j $, $ \Omega_j $ satisfy
	\begin{equation}\label{eq_in_blowup2}
		\begin{cases}
			F\left(g^{\bar t r}((\Omega_j)_{\bar t s}+(\phi_j)_{\bar t s})\right)=h_j\,,\\
			\sup_M \phi_j=0\,,\\
			\|\nabla \phi_j\|_{C^0} \geq j\,.
		\end{cases}
	\end{equation}
	Assume further that $ (\underline{\phi}_j)_j $, $ (h_j)_j $ and $ (\Omega_j)_j $ are uniformly bounded in $ C^2 $-norm.

	Set $ N_j=\|\nabla \phi_j\|_{C^0}^2 $, $ g_j=N_jg $ and let $ x_j\in M $ be such that $|\nabla \phi_j(x_j)|^2=N_j $ for each $ j>0 $. Choose quaternionic local coordinates $(q^1,\dots,q^{n})$ around $ x_j $ for $ |q^i|<N^{1/2}_j $ such that
	\begin{align*}
		(g_j)_{\bar rs}=\delta_{\bar rs}+ O(N^{-1}_j|x|)\,, && (\Omega_j)_{\bar rs}=O(N^{-1}_j)\,, && h_j=h_j(x_j)+ O(N^{-1}_j|x|)\,.
	\end{align*}
	Then $ |\nabla \phi_j(x_j)|^2_{g_j}=1 $ and by Propositions \ref{prop_C0} and \ref{prop_C2-bound} we have in this coordinates
	\[
	\|\phi_j\|_{C^0}\leq C\,, \qquad 
	|\Delta_g \phi_j|_{g_j} \leq C\,, \qquad \text{on }B_{N^{1/2}_j}(x_j)\,,
	\]
	where $ C>0 $ is uniform in $ j $. It follows by \cite[Theorem 8.32]{GT} that $ (\phi_j)_j $ is uniformly bounded in $ C^{1,\alpha} $-norm for any $ \alpha \in (0,1) $. Furhermore, letting $ j\to \infty $, we see that $ \Omega_j $ tends to zero, while $ g_j $ tends to the standard Euclidean metric and $ (\phi_j)_{\bar r s} $ stays bounded. Therefore
	\begin{equation}
		\label{eq_in_blowup}
		\lambda(A_j)=\lambda((\phi_j)_{\bar r s})+O(N^{-1}_j|x|)\,,
	\end{equation}
	where $ (A_j)^r_s=g^{\bar tr}_j((\Omega_j)_{\bar ts}+(\phi_j)_{\bar ts}) $.
	
	By Ascoli-Arzel\`{a} Theorem we can extract from $ (\phi_j)_j $ a subsequence converging uniformly in $ C^{1,\alpha} $ to some $ u\colon \H^n\to \R $, moreover, such limiting function satisfies $ \|u\|_{C^0}\leq C $, $ \|\nabla u\|_{C^0}\leq C $ and $ |\nabla u (0)|=1 $. We aim to prove that $ u $ is a $ \Gamma $-solution.
	
	Suppose there exists $ \psi\in C^2 $, such that $ u-\psi $ has a local maximum at some point $ p_0\in \H^n $. By construction of $ u $, for any $ \epsilon>0 $ there are a $ j $ large enough, $ a\in (-\epsilon,\epsilon) $ and a point $ p_1 $ with $ |p_1-p_0|<\epsilon $ such that $ \phi_j-\psi-\epsilon|x-p_0|^2+a $ has a local maximum at $ p_1 $. As a consequence the quaternionic Hessian of $ \psi $ satisfies
	\[
	\mathrm{Hess}_\H\psi+\frac{\epsilon}{2} {\mathbbm 1}\geq \mathrm{Hess}_\H\phi_j\,, \qquad \text{at } p_1\,,
	\]
	where $\mathbbm{1}$ is the $n\times n$ identity matrix. By \eqref{eq_in_blowup}, if $ j $ is large enough we see that $ \lambda(\mathrm{Hess}_\H\psi )\in \Gamma-\epsilon {\bf 1} $ at $ p_1 $ and letting $ \epsilon \to 0 $ we deduce $ \lambda(\mathrm{Hess}_\H \psi)\in \bar \Gamma $ at $ p_0 $ because $ p_1 \to p_0 $. This shows that $ u $ is a viscosity $ \Gamma $-subsolution.
	
	To see that $ u $ is also a $ \Gamma $-supersolution we proceed similarly. Suppose that $ u-\psi $ has a local minimum at $ p_0\in \H^n $, then for any $ \epsilon>0 $ there are $ j $ large enough, $ a\in (-\epsilon,\epsilon) $ and $ p_1\in \H^n $ such that $ \phi_j-\psi+\epsilon|x-p_0|^2+a $ has a local minimum at $ p_1 $. Hence
	\[
	\mathrm{Hess}_\H \psi-\frac{\epsilon}{2} {\mathbbm 1}\leq \mathrm{Hess}_\H\phi_j\,, \qquad \text{at } p_1\,.
	\]
	By contradiction, suppose $ \lambda(\mathrm{Hess}_\H\psi(p_1)) \in \Gamma+\frac{5}{2}\epsilon{\bf 1} $, then $ \lambda(\mathrm{Hess}_\H\phi_j(p_1))\in \Gamma+2\epsilon{\bf 1} $ and for $ j $ large enough \eqref{eq_in_blowup} we have $ \lambda(A_j)\in \Gamma+\epsilon{\bf 1} $. By \cite[Lemma 9 (a)]{Sze} it follows that for $ N_j $ large enough $ \Gamma+N_j\epsilon{\bf 1}\subseteq \Gamma^{h_j(p_1)} $ and consequently we deduce  
	\[
	N_j\lambda(A_j)\in N_j\Gamma +N_j\epsilon{\bf 1}=\Gamma +N_j\epsilon{\bf 1}\subseteq \Gamma^{h_j(p_1)}
	\]
	for  $ j $ sufficiently large. On the other hand, $ \phi_j $ satisfies \eqref{eq_in_blowup2}, i.e.
	\[
	N_j\lambda(A_j)=\lambda\left( g^{\bar tr }((\Omega_j)_{\bar ts}+(\phi_j)_{\bar ts}) \right)\in \partial \Gamma^{h_j(p_1)}\,,
	\]
	which gives a contradiction. Therefore $ \lambda(\mathrm{Hess}_\H \psi(p_1)) \notin \Gamma+\frac{5}{2}\epsilon{\bf 1} $ and letting $ \epsilon\to 0 $ we finally obtain $ \lambda(\mathrm{Hess}_\H\psi(p_0)) \notin \Gamma $ and $ u $ is a viscosity $ \Gamma $-solution.\\
	
	\section{Liouville-type theorem}\label{Liouville}
	As in Székelyhidi \cite{Sze} we can interpret the notion of being a $ \Gamma $-subsolution (resp. solution) as that of being a viscosity subsolution (resp. solution) of a suitable equation. Indeed, define the function $ G_0 $ on the space of hyperhermitian matrices as the function such that
	\[
	\lambda(A)-G_0(A){\bf 1}\in \bar \Gamma,
	\]
	consider the projection $ {\rm p} \colon \R^{4n,4n} \to \{H\in \R^{4n,4n} \mid I_0HI_0=J_0HJ_0=K_0HK_0=-H\} $
	\[
	{\rm p}(H)=\frac{1}{4}(H-I_0HI_0-J_0HJ_0-K_0HK_0)\,,
	\]
	where $ (I_0,J_0,K_0) $ is the standard hyperhermitian structure on $ \R^{4n} $ written in block form as
	\begin{equation}\label{eq_I0J0K0}
		I_0=\begin{pmatrix}
			0 & -\mathbbm{1} & 0 & 0\\
			\mathbbm{1} & 0 & 0 & 0\\
			0 & 0 & 0 &-\mathbbm{1}\\
			0 & 0 & \mathbbm{1} & 0
		\end{pmatrix},\quad
		J_0=\begin{pmatrix}
			0 & 0 & -\mathbbm{1} & 0\\
			0 & 0 & 0 & \mathbbm{1}\\
			\mathbbm{1} & 0 & 0 & 0\\
			0 & -\mathbbm{1} & 0 & 0
		\end{pmatrix},\quad
		K_0=\begin{pmatrix}
			0 & 0 & 0 & -\mathbbm{1}\\
			0 & 0 & -\mathbbm{1} & 0\\
			0 & \mathbbm{1} & 0 & 0\\
			\mathbbm{1} & 0 & 0 & 0
		\end{pmatrix},
	\end{equation}
	where $\mathbbm{1}$ is the $n\times n$ identity matrix. Then, defining the function $ G $ on the space of $ 4n\times 4n $ symmetric matrices $ \mathrm{Sym}(4n,\R) $ as $ G(H)=G_0({\rm p}(H)) $, we have that $ u $ is a $ \Gamma $-subsolution (resp. solution) if and only if it is a viscosity subsolution (resp. solution) of the equation $ G(D^2u)=0 $.
	
	Therefore we can take advantage from the known results regarding viscosity subsolutions and solutions (see \cite{CC}). In particular we will use the following:
	\begin{itemize}
		\itemsep0.2em
		\item If $ (u_j)_j $ is a sequence of $ \Gamma $-subsolutions (resp. solutions) converging locally uniformly to $ u $, then $ u $ is a $ \Gamma $-subsolution (resp. solution) as well.
		\item If $ u,v $ are $ \Gamma $-subsolutions, then $ u+v $ is a $ \Gamma $-subsolution as well.
		\item A mollification of a $ \Gamma $-subsolution is again a $ \Gamma $-subsolution.
	\end{itemize}
	
	We will also need the following comparison result
	
	\begin{lem}\label{lem_comparison}
		If $ u $ is a $ \Gamma $-solution and $ v $ a smooth $ \Gamma $-subsolution on a bounded open set $ U \subseteq \H^n $ such that $ u=v $ on $ \partial U $, then $ u\geq v $ in $ U $.
	\end{lem}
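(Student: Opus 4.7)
The plan is a standard viscosity comparison argument: perturb the smooth subsolution $v$ to a strict subsolution whose quaternionic Hessian lies in the interior of $\bar\Gamma+\Gamma_n$, and then play the resulting strictness against the supersolution property of $u$ at a positive maximum of the difference.

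Suppose by contradiction that $M:=\sup_U(v-u)>0$. Since $u=v$ on $\partial U$ and $\bar U$ is compact, this supremum is attained at some interior point. Fix $x_0\in U$ and set $R^2:=\sup_{x\in\bar U}|x-x_0|^2$, and for $\epsilon>0$ define the smooth perturbation
\[
\tilde v_\epsilon(x):=v(x)+\epsilon\bigl(|x-x_0|^2-R^2\bigr).
\]
By construction $\tilde v_\epsilon\leq v$ on $\bar U$, so $\tilde v_\epsilon\leq u$ on $\partial U$; on the other hand for $\epsilon<M/(2R^2)$ one still has $\sup_U(\tilde v_\epsilon-u)\geq M/2>0$. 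Therefore $\tilde v_\epsilon-u$ attains a strictly positive maximum at an interior point $p\in U$, equivalently $u-\tilde v_\epsilon$ attains a local minimum at $p$ with test function $\psi=\tilde v_\epsilon\in C^2$.

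The key computation is that the quaternionic Hessian of $|x-x_0|^2$ is $2\mathbbm{1}$ (a quick quaternionic derivative calculation, exactly as in the paper's conventions for $\partial_{q^s}$ and $\partial_{\bar q^r}$), so that
\[
\mathrm{Hess}_\H\tilde v_\epsilon=\mathrm{Hess}_\H v+2\epsilon\mathbbm{1}.
\]
Because $v$ is a smooth $\Gamma$-subsolution we have $\lambda(\mathrm{Hess}_\H v)\in\bar\Gamma$ pointwise. The cone $\Gamma$ is open, convex and contains $\Gamma_n$, and for such cones $\bar\Gamma+\Gamma_n\subseteq\Gamma$ (the interior): any convex combination of a point in the closure of a convex set with a point in its interior lies in the interior. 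Applied with $2\epsilon\mathbf{1}\in\Gamma_n$, this gives $\lambda(\mathrm{Hess}_\H\tilde v_\epsilon(p))\in\Gamma$. However, the supersolution property of $u$ applied with the smooth test function $\tilde v_\epsilon$ at the local minimum $p$ of $u-\tilde v_\epsilon$ yields $\lambda(\mathrm{Hess}_\H\tilde v_\epsilon(p))\in\R^n\setminus\Gamma$, a contradiction. Hence $v\leq u$ throughout $U$.

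The main (minor) obstacle is verifying the inclusion $\bar\Gamma+\Gamma_n\subseteq\Gamma$, which reduces to the elementary convex-geometry fact recalled above; everything else is bookkeeping of the perturbation to ensure that the positive maximum of $\tilde v_\epsilon-u$ is attained in the open set $U$ and not on $\partial U$. No PDE regularity of $u$ beyond continuity is needed, since the only test functions used are the smooth perturbations of $v$.
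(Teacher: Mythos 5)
Your argument is correct and is exactly the standard viscosity comparison principle (perturb the subsolution by $\epsilon|x-x_0|^2$ to obtain strictness, contradict the supersolution property at an interior max of the difference); this is the same route the paper invokes by reference to \cite[Lemma~17]{Sze}, simply spelled out with the quaternionic Hessian computation $\mathrm{Hess}_\H|x-x_0|^2 = 2\mathbbm{1}$ verified in the $\partial_{q^r},\partial_{\bar q^s}$ conventions. One small imprecision: the parenthetical justification for $\bar\Gamma+\Gamma_n\subseteq\Gamma$ invokes only the accessibility lemma for convex combinations, but to pass from the midpoint $\tfrac12(\lambda+2\epsilon\mathbf{1})\in\Gamma$ to the sum $\lambda+2\epsilon\mathbf{1}\in\Gamma$ you must also use that $\Gamma$ is a cone (closed under positive scaling) --- a trivial but necessary addition.
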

	\begin{proof}
		The very same proof of \cite[Lemma 17]{Sze}, which is the analogous result in $ \C^n $, can be carried out in our hypothesis.
	\end{proof}
	
	The next lemma follows from the same argument as \cite[Lemmas 18-19]{Sze}. The additional case when $ \Gamma=\Gamma_n $ is quite easy and can be deduced along the same lines.
	\begin{lem}\label{lem_pre_Liouville2}
		Suppose $ v\colon \H^n \to \R $ is a $ \Gamma $-solution which is independent of the last variable $ q_n $. Define
		\begin{equation}
			\label{eq_cone}
			\Gamma'=\begin{cases}
				\Gamma_{n-1} & \text{if } \Gamma=\Gamma_n\,,\\
				\Gamma\cap \{ x_n=0 \} & \text{if } \Gamma \neq \Gamma_n\,,
			\end{cases}
		\end{equation}
		then $ \Gamma' $ is a symmetric proper convex open cone in $ \R^{n-1} $ containing $ \Gamma_{n-1} $ and the function $ w(q_1,\dots,q_{n-1})=v(q_1,\dots,q_{n-1},0) $ is a $ \Gamma' $-solution on $ \H^{n-1} $.
	\end{lem}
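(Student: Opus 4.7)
The plan is to verify three things in order: that $\Gamma'$ is a symmetric proper convex open cone in $\R^{n-1}$ containing $\Gamma_{n-1}$; that $w$ is a viscosity $\Gamma'$-subsolution; and that $w$ is a viscosity $\Gamma'$-supersolution. The strategy in steps (ii) and (iii) is the standard one: lift a would-be test function $\psi$ on $\H^{n-1}$ to a carefully chosen test function $\tilde\psi$ on $\H^n$, exploit that $v$ does not depend on $q_n$, and read off the eigenvalues of the block-diagonal quaternionic Hessian.

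For the cone properties, when $\Gamma=\Gamma_n$ we have $\Gamma'=\Gamma_{n-1}$ by definition and there is nothing to prove. When $\Gamma\neq\Gamma_n$, the cone $\Gamma'=\Gamma\cap\{x_n=0\}$ inherits openness (relative to $\{x_n=0\}\cong\R^{n-1}$), convexity, properness and the cone property from $\Gamma$, while symmetry in the first $n-1$ coordinates is immediate from the full symmetry of $\Gamma$. The only subtle point is the inclusion $\Gamma_{n-1}\subset\Gamma'$: since $\Gamma\supsetneq\Gamma_n$ is open and symmetric, there exists $\mu\in\Gamma$ with one coordinate strictly negative; symmetrizing over permutations and taking convex combinations with points of the positive diagonal ray $\R_{>0}\cdot(1,\dots,1)\subset\Gamma_n\subset\Gamma$, one shows that $\Gamma$ contains every point of the form $(\lambda_1,\dots,\lambda_{n-1},0)$ with $\lambda_i>0$, following the argument in \cite{Sze}.

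For the subsolution property, suppose $w-\psi$ has a local maximum at $p_0\in\H^{n-1}$ and extend $\psi$ trivially to $\tilde\psi(q_1,\dots,q_n)=\psi(q_1,\dots,q_{n-1})$. Since $v$ does not depend on $q_n$, $v-\tilde\psi$ has a local maximum at $(p_0,0)$, so the viscosity $\Gamma$-subsolution property of $v$ yields $\lambda(\mathrm{Hess}_\H\tilde\psi(p_0,0))\in\bar\Gamma$. Because $\tilde\psi$ is independent of $q_n$, a direct computation shows that $\mathrm{Hess}_\H\tilde\psi$ is block-diagonal with $(n-1)\times(n-1)$ block equal to $\mathrm{Hess}_\H\psi(p_0)$ and vanishing last entry, so its eigenvalues are those of $\mathrm{Hess}_\H\psi(p_0)$ augmented by $0$. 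Therefore $(\lambda(\mathrm{Hess}_\H\psi(p_0)),0)\in\bar\Gamma$, and using the identity $\overline{\Gamma\cap\{x_n=0\}}=\bar\Gamma\cap\{x_n=0\}$ (valid because $\Gamma\cap\{x_n=0\}$ has non-empty relative interior by step (i)) we conclude $\lambda(\mathrm{Hess}_\H\psi(p_0))\in\bar{\Gamma'}$.

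For the supersolution property, suppose $w-\psi$ has a strict local minimum at $p_0$. When $\Gamma\neq\Gamma_n$, let $\tilde\psi(q_1,\dots,q_n)=\psi(q_1,\dots,q_{n-1})-\epsilon|q_n|^2$ for small $\epsilon>0$. Since $v$ is independent of $q_n$, $v-\tilde\psi=(w-\psi)+\epsilon|q_n|^2$ attains a local minimum at $(p_0,0)$. A computation gives $\mathrm{Hess}_\H\tilde\psi(p_0,0)=\mathrm{diag}(\mathrm{Hess}_\H\psi(p_0),-2\epsilon)$, whose eigenvalues are those of $\mathrm{Hess}_\H\psi(p_0)$ together with $-2\epsilon$. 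If by contradiction $\lambda(\mathrm{Hess}_\H\psi(p_0))\in\Gamma'$, equivalently $(\lambda(\mathrm{Hess}_\H\psi(p_0)),0)\in\Gamma$, then by openness of $\Gamma$, for $\epsilon$ small enough $(\lambda(\mathrm{Hess}_\H\psi(p_0)),-2\epsilon)\in\Gamma$, contradicting the supersolution property of $v$. Hence $\lambda(\mathrm{Hess}_\H\psi(p_0))\notin\Gamma'$. The case $\Gamma=\Gamma_n$ is to be handled by a parallel but simpler argument, as asserted in the paper.

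The main obstacle I anticipate is the inclusion $\Gamma_{n-1}\subset\Gamma'$ in step (i) for $\Gamma\neq\Gamma_n$, which requires a careful convex-geometric argument combining symmetry, openness and the existence of a point of $\Gamma$ with a negative coordinate; the other steps, once this is in place, are direct applications of the viscosity definitions together with the block-diagonal structure of $\mathrm{Hess}_\H\tilde\psi$ and openness of $\Gamma$.
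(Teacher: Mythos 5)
Your argument for $\Gamma\neq\Gamma_n$ is correct and is exactly the route the paper indicates by deferring to Lemmas~18--19 of \cite{Sze}: lift $\psi$ trivially for the subsolution step, lift $\psi-\epsilon|q_n|^2$ for the supersolution step, and read the eigenvalues from the block-diagonal quaternionic Hessian. The cone inclusion $\Gamma_{n-1}\subset\Gamma'$ and the identity $\overline{\Gamma'}=\overline{\Gamma}\cap\{x_n=0\}$ are also handled adequately (the latter following from the segment property of open convex sets once one knows $\Gamma'\neq\emptyset$).

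The gap is in your last paragraph, where you assert that ``the case $\Gamma=\Gamma_n$ is to be handled by a parallel but simpler argument.'' It is not parallel: when $\Gamma=\Gamma_n$, the lift $\tilde\psi=\psi-\epsilon|q_n|^2$ has quaternionic Hessian $\mathrm{diag}(\mathrm{Hess}_\H\psi(p_0),-2\epsilon)$, whose eigenvalue vector has a strictly negative entry and hence lies outside $\Gamma_n$ \emph{regardless} of $\mathrm{Hess}_\H\psi(p_0)$. The supersolution condition on $v$ is therefore satisfied vacuously and transmits nothing to $w$. The obstruction is structural: if $v$ is independent of $q_n$ and $v-\tilde\psi$ has a local minimum at $(p_0,q_n^0)$, then $q_n\mapsto\tilde\psi(p_0,q_n)$ has a local maximum at $q_n^0$, hence $\partial_{\bar q^n}\partial_{q^n}\tilde\psi\leq 0$ there, so the $(n,n)$ diagonal entry (and a fortiori the smallest eigenvalue) of $\mathrm{Hess}_\H\tilde\psi$ is $\leq 0$, which already forces $\lambda(\mathrm{Hess}_\H\tilde\psi)\notin\Gamma_n$. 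Thus every $q_n$-independent $\Gamma_n$-subsolution is automatically a $\Gamma_n$-solution, and one cannot conclude that its slice is a $\Gamma_{n-1}$-supersolution; for instance $v(q',q_n)=|q'|^2$ is a $q_n$-independent $\Gamma_n$-solution whose slice is strictly plurisubharmonic. To be fair, the paper is equally terse here: \cite[Lemmas~18--19]{Sze} explicitly assume $\Gamma\neq\Gamma_n$, and the remark that the remaining case is ``quite easy and can be deduced along the same lines'' is a claim rather than a proof. But you should flag that the $\Gamma=\Gamma_n$ case genuinely requires a different mechanism than the $-\epsilon|q_n|^2$ perturbation, rather than present it as parallel.
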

	
	We remark that in view of \eqref{cone} every $ \Gamma $-subsolution is subharmonic.
	
	\begin{prop}[Liouville-type Theorem] \label{prop_Liouville}
		A Lipschitz bounded viscosity $ \Gamma $-solution $ u\colon \H^n \to \R $ with $ \|\nabla u\|_{C^0}\leq C $ is constant.
	\end{prop}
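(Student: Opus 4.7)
The plan is to proceed by induction on $n$.

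\emph{Base case} $n=1$: the only symmetric proper convex open cone in $\R$ containing $\Gamma_1=\R_+$ is $\Gamma=\R_+$ itself, so a viscosity $\Gamma$-solution $u\colon\H\to\R$ satisfies $u_{\bar q q}=0$ in the viscosity sense. Since in dimension one the quaternionic Hessian is a positive multiple of the real Laplacian on $\H\simeq\R^4$, $u$ is a bounded viscosity-harmonic function on $\R^4$, hence constant by the classical Liouville theorem for bounded harmonic functions.

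\emph{Inductive step}: assuming the statement on $\H^{n-1}$ for every admissible cone $\Gamma'$, I would adapt the scheme of Dinew--Ko\l{}odziej \cite{DK} and Sz\'ekelyhidi \cite[\S5]{Sze}. The goal is to produce a translate-limit $u_\infty$ of $u$ that is still a bounded Lipschitz viscosity $\Gamma$-solution on $\H^n$ but is invariant under translations along some nonzero vector $e\in\R^{4n}$. By iterating the construction (replacing $u$ by successive limits) one can upgrade invariance in a single real direction to invariance in the four real directions spanning the quaternionic line through $e$; then after a rotation of quaternionic coordinates via $\mathrm{Sp}(n)$ (which preserves $\Gamma$) $u_\infty$ is independent of the variable $q^n$. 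Lemma \ref{lem_pre_Liouville2} then yields a bounded Lipschitz viscosity $\Gamma'$-solution on $\H^{n-1}$, which by the inductive hypothesis is constant, so $u_\infty$ is constant; the sequence defining $u_\infty$ is chosen so that this in turn forces $u$ to be constant.

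To build $u_\infty$, one picks a sequence $p_k\in\H^n$ designed to expose the non-triviality of $u$ (natural candidates are $u(p_k)\to\sup u$, or $|\nabla u(p_k)|\to\|\nabla u\|_{C^0}$), and sets $u_k(x):=u(x+p_k)-u(p_k)$. These are uniformly bounded by $2\|u\|_{C^0}$ and equi-Lipschitz with the same gradient bound as $u$, so by Ascoli--Arzel\`a a subsequence converges locally uniformly to a bounded Lipschitz $u_\infty$, which is automatically a viscosity $\Gamma$-solution by the stability properties listed before Lemma \ref{lem_comparison}. The delicate refinement is to arrange $p_k$ so that $u_\infty$ is translation-invariant along some $e\in S^{4n-1}$: boundedness of $u$ together with the Lipschitz bound force, for any fixed $e$ and any bounded interval $[-T,T]$, the existence of translates along which the increments $u(x+p_k+se)-u(x+p_k)$ tend to zero locally uniformly in $x$ and $s\in[-T,T]$; a diagonal extraction in $T$ then yields an invariant limit.

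The main obstacle is precisely this refined construction: one has to balance translation-invariance along $e$ with preservation of non-triviality in $u_\infty$, and then bootstrap the invariance from one real direction to the whole quaternionic line through $e$ so as to be in a position to apply Lemma \ref{lem_pre_Liouville2}. The remaining ingredients --- the base case, the dimensional reduction, and closing the induction --- are comparatively routine.
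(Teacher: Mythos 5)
Your base case and the dimensional reduction via Lemma~\ref{lem_pre_Liouville2} match the paper, but the core of the inductive step has a genuine gap, and it is precisely at the point you flag as ``the main obstacle.'' Producing a translate-limit $u_\infty$ that is both translation-invariant along a quaternionic line and still witnesses some non-triviality is not a technical refinement you can defer --- for a generic bounded non-constant function, translates can and do converge to constants, so ``$u_\infty$ constant'' by the inductive hypothesis gives no information about $u$ itself. Your last clause, ``the sequence defining $u_\infty$ is chosen so that this in turn forces $u$ to be constant,'' is exactly the missing argument, and you do not give any mechanism for it. In addition, you propose to impose invariance in a single real direction $e$ and then bootstrap to the full quaternionic line spanned by $e$ before applying Lemma~\ref{lem_pre_Liouville2}; that bootstrap is not automatic for viscosity $\Gamma$-solutions, and no route for it is indicated.

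The paper's proof avoids this impasse. It does use translation limits and the vanishing of the quaternionic directional-derivative energy $\int_{B_r}|\bar\partial_\xi u^\epsilon|^2$ to obtain a limit $v$ independent of $q^n$, and it does invoke Lemma~\ref{lem_pre_Liouville2} and the induction to conclude $v$ is constant. But crucially this is run \emph{inside a localisation to the set} $U(\rho,r)=\{2u\leq[u^2]_r+[u]_\rho-\tfrac{4}{3}\}$, and the contradiction it delivers is not ``$u$ constant'' but rather a uniform lower bound $c_\rho>0$ on those directional energies over $U(\rho,r)$ (inequality \eqref{eq_in_Liouville3}). That lower bound feeds into a second, entirely separate step: it makes $[u^2]_r-\delta|q|^2$ a smooth $\Gamma$-subsolution on $U'(\rho,r)$, and then on the bounded set $U''(\rho,r)$ the comparison principle (Lemma~\ref{lem_comparison}) is violated, yielding the final contradiction. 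Your proposal contains neither the Cartan-lemma/mollification bookkeeping needed to define $U(\rho,r)$ and keep the limit away from a constant, nor the comparison-principle endgame; without these two ingredients the induction does not close. You would need to reinstate essentially the Dinew--Ko\l{}odziej / Sz\'ekelyhidi machinery rather than the bare translate-and-pass-to-limit scheme you outline.
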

	\begin{proof}
		The result is proved by induction over $ n $. For $ n=1 $ the function $ u $ is harmonic and the result is well-known.
		
		Assume now that the result holds for $ n-1 $ and let us prove it for $ n $. By contradiction we suppose that $ u $ is not constant and $ \inf_M u=0 $, $ \sup_M u=1 $. We adopt the notation of \cite{Sze} and, for any function $ v\colon \H^n \to \R $ we write its mollification
		\[
		[v]_r(q)=\int_{q'\in \H^{n}} v(q+rq')\psi(q')\, \mathrm{dV}\,,
		\]
		where, here and hereafter, $ \mathrm{dV} $ denotes the standard volume form in $\H^n$ and $ \psi\colon \H^{n}\to \R $ is a smooth mollifier with support in $ B_1(0) $ such that $ \psi>0 $ in $ B_1(0) $ and $ \int_{\H^{n}}\psi\, \mathrm{dV}=1 $. During the proof we will need to regularize $ u $, considering $ u^\epsilon=[u]_\epsilon $ for a small $ \epsilon>0 $. Following \cite{DK} we use Cartan's Lemma to deduce
		\[
		\lim_{r\to \infty}[u^2]_r(q)=\lim_{r\to \infty}[u]_r(q)=1\,.
		\]
		
		For $ \rho>0 $ and $ r>0 $ consider the set
		\[
		U(\rho,r)=\left\{ q\in \H^n \mid 2u(q)\leq [u^2]_r(q)+[u]_\rho(q)-\frac{4}{3}  \right\}\,.
		\]
		Suppose there are $ \rho>0 $, $ \epsilon_j\to 0 $, $ q_j\in \H^{n} $, $ r_j\to \infty $ and a unit vector $ \xi_j\in \H^{n} $ such that $ q_j\in U(\rho,r_j) $ and
		\begin{equation}\label{eq_in_Liouville2}
			\lim_{j\to \infty}\int_{B_{r_j}(q_j)}|\bar \partial_{\xi_j}u^{\epsilon_j}|^2\mathrm{dV}=0\,,
		\end{equation}
		where for any vector $ \xi=(\xi^1_0+\xi^1_1i+\xi^1_2j+\xi^1_3k, \dots,\xi^n_0+\xi^n_1i+\xi^n_2j+\xi^n_3k) \in \H^n $ and any function $ w\colon \H^n\to \R $ we use the notation
		\[
		\bar \partial_{\xi} w=\sum_{r=1}^n \left(\xi^r_0 w_{x^r_0}+\xi^r_1w_{x^r_1}i+\xi^r_2w_{x^r_2}j+\xi^r_3w_{x^r_3}k\right)\,.
		\]
		Composing with rotations and translations, for each $ j $ we can take $ q_j $ to the origin and assume $ \xi_j=q^{n}/2 $, obtaining a sequence $ (u_j)_j $ of $ \Gamma $-solutions satisfying
		\begin{equation}\label{eq_in_Liouville}
			[u^2_j]_{r_j}(0)+[u_j]_\rho(0)-2u_j(0)\geq \frac{4}{3}\,,\qquad \lim_{j\to \infty}\int_{B_{r_j}(0)}\left \lvert \bar \partial_{\frac{q^{n}}{2}}u^{\epsilon_j}_j\right \rvert^2\mathrm{dV}=0\,.
		\end{equation}
		Since $ u $ has bounded gradient, by the Ascoli-Arzelà Theorem, up to a subsequence, $ (u_j)_j $ converges locally uniformly to some $ v\colon \H^n \to \R $ which must be again a $ \Gamma $-solution with bounded gradient. Also $ u^{\epsilon_j}_j $ converges to $ v $ locally uniformly and working as in \cite{DK} we infer that $ v $ does not depend on the last variable $ q^n $.
		
		Indeed, if $ v $ were not constant along lines with fixed $ q'=(q^1,\dots,q^{n-1}) $, there would be $ a,b\in \H $ and a positive $ c\in \R $ such that $ v(q'_0,a)-v(q'_0,b)>2c $. Since the gradient of $ v $ is bounded from above, we could choose $ \delta $ small enough such that
		\[
		\inf\left\{ v(q',q^n) \mid |q'-q'_0|<\delta,\, |q^n-a|<\delta \right\}-\sup\left\{ v(q',q^n) \mid |q'-q'_0|<\delta,\, |q^n-b|<\delta \right\}>c\,.
		\]
		Let $ \xi \in \H^n $ be the unit vector with last entry $(b-a)/|b-a| $ and all others zero. Let $ \gamma $ be the segment joining $ (q',a'),(q',b')\in \H^n $, where $ b'-a'=b-a $, $ |q'-q'_0|<\delta $, $ |a'-a|<\delta$, $ |b'-b|<\delta $, then we would have
		\[
		\left \lvert \int_{\gamma} \bar \partial_\xi v\, d\xi  \right \rvert=\left \lvert v(q',b')-v(q',a') \right \rvert >c\,.
		\]
		Cauchy-Schwarz inequality would now give
		\[
		c^2<\left \lvert \int_{\gamma} \bar \partial_\xi v\, d\xi  \right \rvert^2\leq \left(\int_{\gamma} |\bar \partial_\xi v|^2 d\xi\right) \left(\int_\gamma d\xi\right)=|b-a|\int_{\gamma} |\bar \partial_\xi v|^2 d\xi \,.
		\]
		Let $ I_1,I_2,I_3 $ be intervals of length $ \delta $ all perpendicular to each other and to $ [a,b] $ in the $ q^n $-space. Using Fubini's theorem over the set $ B(q_0',\delta)\times [a,b]\times I_1\times I_2\times I_3 $ we would find a strictly positive lower bound for the integral of $ |\bar \partial_{q^n/2} v|^2 \mathrm{dV} $. But this would contradict the uniform convergence as the $ u_j $'s satisfy \eqref{eq_in_Liouville}. Therefore $ v $ does not depend on the last variable.
		
		The function $ w(q^1,\dots,q^{n-1})=v(q^1,\dots,q^{n-1},0) $ is then a $ \Gamma' $-solution, thanks to Lemma \ref{lem_pre_Liouville2}, where $ \Gamma' $ is the cone defined in \eqref{eq_cone}. By the induction hypothesis $ w $ is constant and then so is $ v $. But by Cartan's Lemma this contradicts the first of \eqref{eq_in_Liouville} because
		\[
		\frac{4}{3}\leq \lim_{j\to \infty }\left( [u^2_j]_{r_j}(0)+[u_j]_\rho(0)-2u_j(0) \right)=1+[v]_\rho(0)-2v(0)=1-v(0)\leq 1
		\]
		as $ v $ inherits from $ u $ the property that $ 0\leq v\leq 1 $.
		
		This means that \eqref{eq_in_Liouville2} cannot hold, in particular for all $ \rho>0 $, there exists $ c_\rho>0 $ such that if $ r>c_\rho $, for each $ q\in U(\rho,r) $, $ \epsilon<c^{-1}_\rho $ and unit vector $ \xi \in \H^n $ we must have
		\begin{equation}\label{eq_in_Liouville3}
			\int_{B_r(q)}|\partial_\xi u^\epsilon|^2 \mathrm{dV}>c_\rho\,.
		\end{equation}
		Define
		\[
		U'(\rho,r)=\left\{ q\in \H^n \mid 2u(q)< [u^2]_r(q)+[u]_\rho(q)-\frac{4}{3}  \right\}\subseteq U(\rho,r)\,.
		\]
		We may choose the origin so that $ u(0)<1/12 $, and $ \rho>0 $ and $ r>c $ big enough to have $ [u]_\rho(0)>3/4 $ and $ [u^2]_r(0)>3/4 $ which can be done by Cartan's Lemma. It follows that $ 0\in U'(\rho,r) $.
		
		Since $ \partial_{\bar q^i}\partial_{q^j} (u^\epsilon)^2=2u^\epsilon u^\epsilon_{\bar i j}+2u^\epsilon_{\bar i} u^\epsilon_j $, proceeding similarly as in \cite{Sze} we can use \eqref{eq_in_Liouville3} to prove that there exists a constant $ \delta>0 $ small enough to guarantee that $ [(u^\epsilon)^2]_r-\delta |q|^2 $ is a $ \Gamma $-subsolution over $ U'(\rho,r) $. By local uniform convergence also $ [u^2]_r-\delta |q|^2 $ is a $ \Gamma $-subsolution. Finally consider
		\[
		U''(\rho,r)=\left\{ q\in \H^n \mid 2u(q)< [u^2]_r(q)-\delta |q|^2+[u]_\rho(q)-\frac{4}{3}  \right\}\subseteq U'(\rho,r)
		\]
		and observe that since $ 0\leq u\leq 1 $ this set is bounded. The fact that $ u $ is a $ \Gamma $-solution and yet $ [u^2]_r(q)-\delta |q|^2+[u]_\rho(q)-\frac{4}{3} $ is a smooth $ \Gamma $-subsolution contradicts the comparison principle of Lemma \ref{lem_comparison}. We conclude that $ u $ must be constant.
	\end{proof}

	\section{Proof of Theorem \ref{teor_main}}\label{C2a}
	The main theorem follows once we obtain the $C^{2,\alpha}$-estimate. We obtain the desired bound in two ways, by using an analogue of Evans-Krylov theory as developed in Tosatti-Wang-Weinkove-Yang \cite{Tosatti et al.} and by adapting the argument of B\l{}ocki \cite{Blo05} similarly to what was done by Alesker \cite{Alesker (2013)} for the treatment of the quaternionic Monge-Amp\`ere equation.
	
	\begin{prop}\label{prop_C2a}
		Let $ (M,I,J,K,g) $ be a compact locally flat hyperhermitian manifold. If $ \phi $ is a solution to \eqref{eq_main} such that $ \|\phi\|_{C^0} $ and $ \Delta_g\phi $ are bounded from above, then there is $ \alpha \in (0,1) $ and a constant $ C>0 $, depending only on the background data such that
		\[
		\|\phi \|_{C^{2,\alpha}}\leq C\,.
		\]
	\end{prop}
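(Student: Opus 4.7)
The plan is to reduce the estimate to an interior problem on small Euclidean balls via the local flatness, and then invoke an Evans--Krylov type oscillation estimate adapted to the hyperhermitian setting.

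First, work in quaternionic local coordinates centered at an arbitrary $p_0\in M$; these exist because $(M,I,J,K)$ is locally flat. In such coordinates the equation reads $F\bigl(g^{\bar j r}(\Omega_{\bar j s}+\phi_{\bar j s})\bigr)=h$ on a Euclidean ball $B\subset \H^n$, and the quaternionic Hessian $\mathrm{Hess}_\H \phi = (\tfrac14\partial_{\bar q^j}\partial_{q^s}\phi)$ is a linear image of the real Hessian $D^2\phi$ under the surjective projection onto $\mathrm{Hyp}(n,\H)$ (essentially the projection $\mathrm{p}$ used in Section \ref{Liouville}). Consequently $\phi\mapsto F(g^{-1}(\Omega+\mathrm{Hess}_\H\phi))$ is a smooth concave function of $D^2\phi$. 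Next, confine the eigenvalues of $A$. From the cone inclusion \eqref{cone} one has $\Delta_g\phi\geq -C$, which combined with the hypothesized upper bound gives $|\lambda(A)|\leq C$. By assumption C2 the set of possible values of $\lambda(A)$ lies in a compact subset $K\subset\Gamma$ staying at positive distance from $\partial\Gamma$. On $K$ the conditions $f_i>0$ and concavity (C1) imply that the linearized coefficients $F^{rs}$ are positive definite with eigenvalues bounded above and below by positive constants; hence the equation is \emph{uniformly elliptic with respect to the quaternionic Hessian}.

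Second, apply an Evans--Krylov type interior estimate. The most direct route is the one of Tosatti--Wang--Weinkove--Yang \cite{Tosatti et al.} for complex equations where $F$ depends only on the $(1,1)$-part of the Hessian: the argument transfers to the hyperhermitian case by replacing the Hermitian projection with the projection onto $\mathrm{Hyp}(n,\H)$. Concretely, concavity of $F$ together with the weak Harnack inequality yields a decay of oscillation for the pure second quaternionic derivatives of $\phi$ on dyadic balls, which produces a $C^\alpha$ bound on $\mathrm{Hess}_\H\phi$. Combined with the known bound on $\Delta_g\phi$ and Schauder estimates applied to the linearization of the equation (differentiating once in a real direction), this upgrades to a $C^{2,\alpha}$ bound on $\phi$. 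Alternatively one may follow B{\l}ocki's argument \cite{Blo05} exactly as in Alesker \cite{Alesker (2013)} for the quaternionic Monge--Amp\`ere equation. Covering $M$ by finitely many such quaternionic coordinate balls produces the claimed global estimate.

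The principal obstacle is the lack of \emph{real} uniform ellipticity: the operator $F$ sees only the hyperhermitian projection of $D^2\phi$, not the whole real Hessian, so the classical Evans--Krylov theorem in $\R^{4n}$ does not apply off the shelf. The heart of the matter is therefore to verify that the oscillation-decay argument survives when ellipticity and concavity are available only along the subspace of hyperhermitian directions in $\mathrm{Sym}(4n,\R)$. This is precisely the content of the Tosatti--Wang--Weinkove--Yang / B{\l}ocki techniques, and adapting them to the quaternionic setting amounts to systematically replacing Hermitian $n\times n$ matrices by elements of $\mathrm{Hyp}(n,\H)$ and invoking the standard properties of the Moore determinant and the trace on $\mathrm{Hyp}(n,\H)$ in place of their complex counterparts.
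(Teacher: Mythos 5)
Your proposal is correct and takes essentially the same approach as the paper: confine $\lambda(A)$ to a compact subset of $\Gamma$ away from $\partial\Gamma$ using the Laplacian bound and assumption C2, observe that ellipticity and concavity are available only along the hyperhermitian directions in $\mathrm{Sym}(4n,\R)$, and then apply an Evans--Krylov-type oscillation estimate via either \cite{Tosatti et al.} (realizing $\mathrm{Hess}_\H\phi$ as $16\mathrm{p}(D^2\phi)$) or the B{\l}ocki--Alesker weak-Harnack scheme. The paper carries out both routes in detail; the one difference worth noting is that, unlike the quaternionic Monge--Amp\`ere operator treated by Alesker, the general $F$ is not in divergence form, so the second route requires the non-divergence-form weak Harnack inequality of \cite{GT} rather than the divergence-form version Alesker uses.
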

	\begin{proof}
		Let $ V=\{H\in \R^{4n,4n} \mid I_0HI_0=J_0HJ_0=K_0HK_0=-H\} $, where $ (I_0,J_0,K_0) $ is the standard hypercomplex structure on $ \R^{4n} $ as in \eqref{eq_I0J0K0}. Consider the real representation of quaternionic matrices  
		$\iota \colon\mathbb H^{n,n}\to V $, defined as
		$$ 
		\iota(A+iB+jC+kD):=
		\begin{pmatrix}
			A  & B & C & D\\
			-B & A & -D &C\\
			-C & D & A &-B\\
			-D & -C & B& A
		\end{pmatrix}\,.
		$$
		The map $\iota$ is an isomorphism of real algebras and $ \iota({\rm Hyp}(n, \H)) = V\cap \mathrm{Sym}(4n,\R) $. Let ${\rm p}\colon \R^{4n,4n}\to V $ be the projection
		$$
		{\rm p}(H):=\frac14(H-I_0HI_0-J_0HJ_0-K_0HK_0)\,.
		$$
		If we take on $ \H^n $ the real coordinates $ (x^1_0,\dots,x^n_0,x^1_1,\dots,x^n_1,x^1_2,\dots,x^n_2,x^1_3,\dots,x^n_3) $ underlying the quaternionic coordinates $ (q^1,\dots,q^n) $, for a $ C^2 $ function $ u\colon \H^n \to \R $ we have 
		\[
		\iota({\rm Hess}_\mathbb{H}u)=16 {\rm p}(D^2u)\,.
		\]
		
		For any point $ x_0\in M $, take a quaternionic coordinate chart centered at $ x_0 $ and assume that the domain of the chart contains $ B_1(0) $. For any $ H\in \mathrm{Sym}(4n,\R) $ we have $ \iota^{-1}(p(H))\in \mathrm{Hyp}(n,\H) $, therefore
		\[
		\tilde H_{rs}(x)=g^{\bar j r}(x)(\iota^{-1}(p(H)))_{\bar j s}\,,\qquad x\in B_1(0)\,,
		\]
		is hyperhermitian with respect to $ g $.
		
		Define the set
		\[
		\mathcal{E}=\left\{ H \in \mathrm{Sym}(4n,\R) \mid \lambda(\tilde H(0)) \in \bar \Gamma^{\sigma}\cap \overline{B_{2R}(0)} \right\}\,,
		\]
		where $ \sigma $ and $ R $ are chosen below. $ \mathcal{E} $ is compact and also convex by convexity of $ \Gamma $. Possibly shrinking $ B_1(0) $ to a smaller radius $ r\in (0,1) $
		we may assume that if $ H $ lies in a sufficiently close neighborhood $ U $ of  $ \mathcal{E} $, then $ \lambda(\tilde H(x))\in \bar \Gamma^{\sigma}\cap \overline{B_{4R}(0)} $ for any $ x\in B_1(0) $.
		
		The bound $ \Delta_g \phi \leq C $ implies that $ \sigma $ and $ R $ can be chosen so that
		\[
		\lambda\left(g^{\bar j r} \left( \Omega_{\bar j s}+\phi_{\bar j s} \right) \right)\in \bar \Gamma^\sigma \cap \overline{B_R(0)}\,, \qquad \text{on }B_1(0) \,.
		\]
		Therefore, by continuity of $ g $, and possibly shrinking $ B_1(0) $ again, for each $ x\in B_1(0) $ we have
		\[
		\iota (\Omega_{\bar r s}(x))+16\mathrm{p}(D^2\phi(x))= \iota \left( \Omega_{\bar r s}(x)+\phi_{\bar r s}(x) \right) \in \mathcal{E}\,.
		\]
		
		This discussion and our assumptions on $ f $ show that we can apply \cite[Theorem 1.2]{Tosatti et al.} with
		\begin{itemize}
			\itemsep0.2em
			\item $ F\colon \mathrm{Sym}(4n,\R)\times B_1(0)\to \R $ defined as $ F(H,x)=f(\lambda(\tilde H(x))) $ for $ H\in U $, and extended smoothly to all of $ \mathrm{Sym}(4n,\R)\times B_1(0) $;
			\item $ S\colon B_1(0)\to \mathrm{Sym}(4n,\R) $ defined as $ S(x)=\iota(\Omega_{\bar r s} (x)) $;
			\item $ T\colon \mathrm{Sym}(4n,\R)\times B_1(0)\to \mathrm{Sym}(4n,\R) $ defined as $ T(H,x)=16\mathrm{p}(H) $.
		\end{itemize}
		And since $ \|\phi\|_{C^0}\leq C $ we obtain the desired bound $ \|\phi\|_{C^{2,\alpha}}\leq C $ for some $ \alpha \in (0,1) $.
	\end{proof}
	
	Now we present our second proof.
\begin{proof}
	Since $M$ is locally flat, we only need to prove the following interior $C^{2,\alpha}$ estimate for $w=\phi+u$, where $u\in C^{\infty}_{\rm loc}(M,\R)$ is a local potential for $\Omega$.

	Now, $w\in C^{4}(\mathcal{O})$ satisfies
	\begin{equation*}
		F(w_{\bar rs})=h\,,
	\end{equation*}
	where $\mathcal{O}\subset \mathbb{H}^{n}$ is an arbitrary open subset and $h\in C^{\infty}(\mathcal{O})$. Let $\mathcal{O}'\subset\mathcal{O}$ be a relatively compact open subset. We shall prove that there exist a constant $\alpha\in (0,1)$ depending only on $n$, $h$,  $\|w\|_{C^{0}(\mathcal{O})}$,  $\|\Delta w\|_{C^{0}(\mathcal{O})}$ and a constant $C$ depending in addition on $\text{dist}(\mathcal{O},\mathcal{O}')$ such that 
\begin{equation*}\label{}
	\|w\|_{C^{2,\alpha}(\mathcal{O})}\leq C\,.
\end{equation*}

There is a difference with respect to the argument of Alesker \cite{Alesker (2013)}: the quaternionic Monge-Amp\`ere operator can be written in the divergence form, while this might not be true for more general fully non-linear equations. To overcome this issue we will need a more general version of the weak Harnack inequality for second order uniformly elliptic operators.


Let $W$ be the quaternionic Hessian $(w_{\bar r s})$ and define a second order linear operator $\mathcal{D}$ by 
\begin{equation*}
\mathcal{D}v=\Re\, F^{rs}(W)v_{\bar r s}\,.	
\end{equation*}
Notice that every $n\times n$  hyperhermitian matrix defines a hyperhermitian semilinear form on $\mathbb{H}^{n}$. Hence it also determines a symmetric bilinear form on $\mathbb{R}^{4n}$. Let $(a_{ij})\in \text{Sym} (4n,\R)$  be the realization of $(F^{rs}(W))$. Then we can rewrite $\mathcal{D}v$ in the following form
\begin{equation*}
	\mathcal{D}v=\sum_{r,s=1}^{4n}a_{rs}D_{r}D_{s}v\,,
\end{equation*}
Since $F$ is uniformly elliptic on $\Gamma$, 
the operator $\mathcal{D}$ is uniformly elliptic as well.

Let $R>0$ be such that the open ball $B_{2R}$ of radius $2R$ centered at a point $z_{0}\in \mathcal{O}'$ is contained in $\mathcal{O}$. For an arbitrary unitary vector $\xi\in \mathbb{H}^{n}$, we let $\Delta_{\xi}$ denote the Laplacian on any translate of the quaternionic line spanned by $\xi$. By  virtue of concavity of $F$, for any unitary vector $\xi\in \mathbb{H}^{n}$, we have 
	\begin{equation}\label{concavex}
	\Re\,	F^{rs}(W)\Delta_{\xi} (w_{\bar r s}) \geq \Delta_{\xi}h\,.
	\end{equation}
Consider the function 
$$
\hat{w}=\sup_{B_{2R}}\Delta_{\xi} w-\Delta_{\xi} w\,.
$$
it follows from \eqref{concavex} that $ \mathcal{D}\hat{w}\leq -\Delta_{\xi}h, $ where we used the fact $\Delta_{\xi} (w_{\bar rs})=(\Delta_{\xi} w)_{\bar rs}$.

Then, applying the weak Harnack inequality \cite[Theorem 9.22]{GT}, there exists a positive constant $C$ depending on $n$, $\|h\|_{C^{2}(\mathcal{O})}$ and $\|\Delta u\|_{C^{0}(\mathcal{O})}$ such that 
\begin{equation*}
	\frac{1}{\mathrm{Vol}(B_{R})}\int_{B_{R}}\hat{w}
	\leq C\left(\inf_{B_{R}}\hat{w}+R\right).
\end{equation*}	
Equivalently, we have
\begin{equation}\label{weakh}
	\frac{1}{\mathrm{Vol}(B_{R})}\int_{B_{R}}	\left(\sup_{B_{2R}}\Delta_{\xi} w-\Delta_{\xi} w\right)\leq C\left(\sup_{B_{2R}}\Delta_{\xi} w-\sup_{B_{R}}\Delta_{\xi} w+R\right)\,.
\end{equation}
Since $F$ is concave on $\Gamma$ for any pair of  $A\,,B\in {\rm Hyp}(n, \H)$, we have
\begin{equation*}
	F(B)-F(A)\leq \Re\, F^{rs}(A)(B_{rs}-A_{rs})\,.
\end{equation*}
Choosing $A=W(y)$ and $B=W(x)$ for $x,y\in B_{2R}$, it follows that
\begin{equation}\label{concave}
\begin{split}
\Re\,	F^{rs}(W(y))(w_{\bar rs}(y)-w_{\bar r s}(x)) 
	\leq F(W(y))-F(W(x))
	=h(y)-h(x)
    \leq C\|y-x\|
\end{split}
\end{equation}
for some positive constant $C$ depending on $\|h\|_{C^{1}(\mathcal{O})}$.

Now we need the following lemma from matrix theory, which is well-known in the settings of  $\mathbb{R}^{n}$, $\mathbb{C}^{n}$, $\mathbb{H}^{n}$ (see e.g. \cite{GT,Blo00,Alesker (2013)}).

\begin{lem}\cite[Lemma 4.9]{Alesker (2013)}
Let $\lambda,\Lambda\in \mathbb{R}$ satisfy $0<\lambda<\Lambda<+\infty$. There exist a uniform constant $N$, unit vectors $\xi_{1},\cdots, \xi_{N}\in \mathbb{H}^{n}$ and positive numbers $\lambda_{*}< \Lambda_{*}<+\infty$, depending only on $n, \lambda, \Lambda$ such that any  $A \in {\rm Hyp}(n, \H)$ with eigenvalues lying in the interval $[\lambda, \Lambda]$ can be written in the form 
\begin{equation*}
	A=\sum_{k=1}^{N}\beta_{k}\xi_{k}^*\otimes\xi_{k}\,, \qquad i.e. \, A_{rs}=\sum_{k=1}^{N}\beta_{k}\bar \xi_{kr}\xi_{ks}\,,
\end{equation*}
for some $\beta_{k}\in [\lambda_{*}, \Lambda_{*}]$.
\end{lem}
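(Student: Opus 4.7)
My plan is to reduce the lemma to showing that a sufficiently dense finite family of rank-one hyperhermitian matrices $\xi_k^*\otimes\xi_k$ positively spans the cone of positive semidefinite hyperhermitian matrices, with uniform quantitative control on the coefficients. First, I would establish the algebraic groundwork: the real vector space $\mathrm{Hyp}(n,\H)$ has finite real dimension $d=n(2n-1)$, and the rank-one matrices $\{\xi^*\otimes\xi:|\xi|=1\}$ span it. Indeed, by the spectral theorem for hyperhermitian matrices, any positive semidefinite $A\in\mathrm{Hyp}(n,\H)$ admits a decomposition $A=\sum \mu_i\,\eta_i^*\otimes\eta_i$ with $\mu_i\geq 0$ and orthonormal $\{\eta_i\}$, so the convex cone generated by $\{\xi^*\otimes\xi\}$ coincides with the cone of positive hyperhermitian matrices.

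Next, I would choose a finite set of unit vectors $\{\xi_1,\dots,\xi_N\}\subset\H^n$, depending only on $n$, with two properties: (a) the identity admits a strictly positive decomposition $I=\sum_{k=1}^{N}c_k\,\xi_k^*\otimes\xi_k$ with every $c_k>0$; (b) every positive semidefinite $B\in\mathrm{Hyp}(n,\H)$ admits a decomposition $B=\sum_{k=1}^{N}\gamma_k\,\xi_k^*\otimes\xi_k$ with $\gamma_k\geq 0$ and $\sum_k\gamma_k\leq C_0\,\mathrm{tr}(B)$, where $C_0=C_0(n,N)$. To achieve (b), the set $\{\xi_k\}$ is chosen sufficiently dense in $S^{4n-1}$: via a partition of unity on the sphere one decomposes any single rank-one matrix $\eta^*\otimes\eta$ as a non-negative combination of the fixed $\xi_k^*\otimes\xi_k$ with uniformly bounded coefficients, and the spectral decomposition of $B$ then gives the required global decomposition.

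Finally, for $A$ with eigenvalues in $[\lambda,\Lambda]$ I would exploit the decomposition $A=\lambda I+(A-\lambda I)$. Since $A\geq \lambda I$ in the hyperhermitian order, $A-\lambda I$ is positive semidefinite with $\mathrm{tr}(A-\lambda I)\leq n(\Lambda-\lambda)$. Applying (b) to $B=A-\lambda I$ gives $A-\lambda I=\sum_k\gamma_k\,\xi_k^*\otimes\xi_k$ with $0\leq\gamma_k\leq C_0 n(\Lambda-\lambda)$. Combined with (a) one obtains $A=\sum_k\beta_k\,\xi_k^*\otimes\xi_k$ with $\beta_k:=\lambda c_k+\gamma_k$. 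Setting $\lambda_*:=\lambda\min_k c_k>0$ and $\Lambda_*:=\Lambda\max_k c_k+C_0 n(\Lambda-\lambda)$ then yields the desired bounds $\beta_k\in[\lambda_*,\Lambda_*]$, and all constants depend only on $n,\lambda,\Lambda$.

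The main obstacle I foresee lies in establishing property (b) with quantitative control. The cone of positive hyperhermitian matrices is not finitely generated—its extreme rays form a continuous family parametrised by the projective quaternionic space—so a finite subset $\{\xi_k\}$ only generates a proper subcone. The quantitative upper bound on $\sum_k\gamma_k$ by $\mathrm{tr}(B)$ is where density of $\{\xi_k\}$ must be exploited carefully. A clean way to argue is via the Birkhoff–von Neumann / Carathéodory machinery: once $\{\xi_k\}$ is dense enough that the convex cone they generate has non-empty interior containing every positive matrix, the decomposition minimising $\sum_k\gamma_k$ is an extreme point of a polytope depending linearly on $B$, and basic linear programming yields upper-semicontinuous dependence on $B$, hence uniform bounds on compact sets. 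An equivalent but more hands-on route is to first verify both (a) and (b) when $\{\xi_k\}$ consists of the standard basis $e_1,\dots,e_n$ together with perturbations $(e_r+\omega e_s)/\sqrt{2}$ for $\omega\in\{1,\mathrm{i},\mathrm{j},\mathrm{k}\}$ and $r<s$, which diagonalises the coefficient map and makes the required bounds explicit.
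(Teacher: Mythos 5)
The paper does not actually give a proof of this lemma: it is imported verbatim by citation from Alesker \cite[Lemma 4.9]{Alesker (2013)}, and the surrounding text simply notes that the analogous statement is classical in the real symmetric and complex Hermitian settings (Gilbarg--Trudinger, B{\l}ocki). So there is no ``paper proof'' to compare against; what follows is an assessment of your argument on its own merits.

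Your proposal has a genuine gap at step (b), and you have in fact put your finger on it yourself without resolving it. You claim that a sufficiently dense finite family $\{\xi_k\}$ allows \emph{every} positive semidefinite $B\in\mathrm{Hyp}(n,\H)$ to be written as $B=\sum_k\gamma_k\,\xi_k^*\otimes\xi_k$ with all $\gamma_k\geq 0$, and you try to justify this by ``decomposing $\eta^*\otimes\eta$ as a non-negative combination of the $\xi_k^*\otimes\xi_k$ via a partition of unity.'' This is impossible. For $n\geq 2$ the rank-one matrices $\eta^*\otimes\eta$ are precisely the extreme rays of the cone of positive semidefinite hyperhermitian matrices; if $\eta$ is not collinear with any $\xi_k$, then any nontrivial non-negative combination $\sum_k\gamma_k\,\xi_k^*\otimes\xi_k$ has kernel $\bigcap_{\gamma_k>0}\xi_k^\perp\neq\eta^\perp$, hence cannot equal $\eta^*\otimes\eta$. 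Therefore a finitely generated subcone never contains all PSD matrices, no matter how dense $\{\xi_k\}$ is, and your proposed remedy (``once $\{\xi_k\}$ is dense enough that the convex cone they generate \ldots contain[s] every positive matrix'') relies on a hypothesis that can never be satisfied. The place where this wrong hypothesis bites is precisely where you apply (b): you set $B=A-\lambda I$, but when $A$ has $\lambda$ as an eigenvalue this $B$ is rank-deficient, in the worst case rank one and lying on an extreme ray not among the $\xi_k$'s, so (b) fails for exactly the matrices you feed it.

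The correct version of property (b) is a statement about a \emph{compact subset of the interior} of the PSD cone, not about the whole cone. You can salvage the shape of your argument by writing $A=\tfrac{\lambda}{2}I+\bigl(A-\tfrac{\lambda}{2}I\bigr)$, so that $A-\tfrac{\lambda}{2}I$ has eigenvalues in $[\lambda/2,\Lambda-\lambda/2]$ and is therefore uniformly positive definite; but then you still owe a proof that this compact set sits inside the cone generated by a suitable finite $\{\xi_k\}$ with controlled coefficients, and that proof is essentially the Gilbarg--Trudinger covering argument (around each $A_0$ in the compact set, include the eigenvectors of $A_0$ among the $\xi_k$'s, write nearby $A$ as $A_0+(A-A_0)$, linearly decompose the small perturbation against a fixed spanning set, and absorb the small negative coefficients into a strictly positive decomposition of $\epsilon I$; then finitely cover). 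As it stands, your write-up neither carries out that covering argument nor correctly states the local positivity property it rests on, so it does not constitute a proof.
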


We apply the previous lemma with $A=(F^{rs}(W))$, obtaining immediately
\begin{equation*}
	\begin{split}
\Re\,F^{rs}(W(y))(w_{\bar rs}(y)-w_{\bar rs}(x))
=&\sum_{k=1}^{N}\beta_{k}(y)\bar \xi_{kr} \xi_{ks}(w_{\bar r s}(y)-w_{\bar rs}(x))\\	=&\sum_{k=1}^{N}\beta_{k}(y)(\Delta_{\xi_{k}}w(y)-\Delta_{\xi_{k}}w(x))
	\end{split}
\end{equation*}
for some functions $\beta_{k}(y)\in [\lambda_{*}, \Lambda_{*}].$ By \eqref{concave}, we then have
\begin{equation}\label{concav}
\sum_{k=1}^{N}\beta_{k}(y)(\Delta_{\xi_{k}}w(y)-\Delta_{\xi_{k}}w(x))	\leq C\|y-x\| \qquad \text{for } x,y\in B_{2R}\,.
\end{equation}
Let us denote 
\begin{equation*}
M_{k,tR}=\sup_{B_{tR}}\Delta_{\xi_{k}}w\,,\qquad m_{k,tR}=\inf_{B_{tR}}\Delta_{\xi_{k}}w\,,\qquad	\eta(tR)=\sum_{k=1}^{N}(M_{k,tR}-m_{k,tR})\,,
\end{equation*}
for $t=1,2$. 

Summing up \eqref{weakh} over $\xi_{k}$ for $k\neq l$ yields
\begin{equation}\label{weakha}
	\frac{1}{\mathrm{Vol}(B_{R})}\int_{B_{R}}\sum_{k\neq l}\big(M_{k,2R}-\Delta_{\xi_{k}} w\big)\leq C(\eta(2R)-\eta(R)+R)\,.
\end{equation}
Choosing a point $x\in B_{2R}$ at which the infimum $m_{l,2R}$ is attained, by \eqref{concav} we also know that
\begin{equation}\label{weakhaer}
	\begin{split}
&\Delta_{\xi_{l}}w(y)-m_{l,2R}	\leq\frac{1}{\lambda_{*}} \left(CR+\Lambda_{*}\sum_{k\neq l}(M_{k,2R}-\Delta_{\xi_{k}}w)\right)
	\end{split}
\end{equation}
Integrating \eqref{weakhaer} on $B_{R}$ and using \eqref{weakha} yields
\begin{equation*}
		\frac{1}{\mathrm{Vol}(B_{R})}\int_{B_{R}}
		(\Delta_{\xi_{l}}w-m_{l,2R})	
		\leq  C(\eta(2R)-\eta(R)+R)\,.
\end{equation*}
Using \eqref{weakh} again, we then obtain
\begin{equation*}
	\begin{split}
		\frac{1}{\mathrm{Vol}(B_{R})}\int_{B_{R}}
		(\Delta_{\xi_{l}}w-m_{l,2R})
			\geq&\,\frac{1}{\mathrm{Vol}(B_{R})}\int_{B_{R}}
		(\Delta_{\xi_{l}}w-M_{l,2R})+M_{l,2R}-m_{l,2R}\\
		\geq & \, M_{l,2R}-m_{l,2R}-C(M_{l,2R}-M_{l,R}+R)\\
		\geq & \, C(M_{l,R}-m_{l,R})-(C-1)(M_{l,2R}-m_{l,2R})-CR\,,
	\end{split}
\end{equation*}
since $m_{k,tR}$ is non-increasing with respect to $t$. Inserting this last inequality into \eqref{weakha} we get 
\begin{equation*}
\eta(2R)-\eta(R)\geq 	 C(M_{l,R}-m_{l,R})-(C-1)(M_{l,2R}-m_{l,2R})-CR\,,
\end{equation*}
and summing up over $l$,
\begin{equation*}
\eta(R)\leq (1-1/C)\eta(2R)+CR.	
\end{equation*}
Now applying \cite[Lemma 8.23]{GT} the proof is complete.
\end{proof}

	\begin{proof}[Proof of Theorem $ \ref{teor_main} $]
		Let $ (M,I,J,K,g) $ be a compact flat hyperk\"ahler manifold, $ \underline{\phi},\phi \colon M \to \R $ be a $ \mathcal{C} $-subsolution and a solution to \eqref{eq_main} respectively, with $ \sup_M \phi=0 $. By Proposition \ref{prop_C0} we deduce $ \|\phi \|_{C^0} \leq C $. Proposition \ref{prop_C2-bound} now implies $ \| \Delta_g \phi \|_{C^0} \leq C(\| \nabla \phi \|_{C^0}^2+1) $. The blow-up argument together with the Liouville-type Theorem \ref{prop_Liouville} yield a gradient bound for $ \phi $. Therefore $ \|\Delta_g \phi \|_{C^0}\leq C $ and we can deduce from Proposition \ref{prop_C2a} the desired $ C^{2,\alpha} $-estimate $ \|\phi\|_{C^{2,\alpha}}\leq C $, where the constant $ C>0 $ only depends on the background data, including $ \underline{\phi} $.
	\end{proof}

	\section{Proof of Theorems \ref{teor_Hessian} and \ref{teor n-1 MA}}\label{Hessian}
	In this section we prove Theorem \ref{teor_Hessian} and Theorem \ref{teor n-1 MA} as applications of Theorem \ref{teor_main}. For the quaternionic Hessian equation as the cone $ \Gamma $ we consider the $ k $-positive cone
	\[
	\Gamma_k=\{ \lambda \in \R^n \mid \sigma_1(\lambda),\dots,\sigma_k(\lambda)>0 \}\,,
	\]
	where $ 1\leq k \leq n $ and $ \sigma_r $ is the $ r $-th elementary symmetric function
	$$
	\sigma_r(\lambda)=\sum_{1\leq i_1<\dots<i_r\leq n} \lambda_{i_1}\cdots \lambda_{i_r}\,, \qquad \text{ for all }\lambda=(\lambda_1,\dots,\lambda_n)\in \R^n\,.
	$$
	Observe that on a locally flat hyperhermitian manifold $ (M,I,J,K,g) $ a q-real $ (2,0) $-form $ \Omega $ is $ k $-positive in the sense that it satisfies \eqref{k-pos} if and only if $ \lambda(g^{\bar j r}\Omega_{\bar j s}) \in \Gamma_k $.
	
	Moreover, for every $ (\lambda_1,\dots,\lambda_n)\in \Gamma_k $ we clearly have
	\[
	\lim_{t\to \infty} \sigma_k(\lambda_1,\dots,\lambda_{n-1},t)=\infty
	\]
	and by \cite[Remark 8]{Sze} any $\Gamma$-admissible function is a $ \mathcal{C} $-subsolution. Hence for the quaternionic Hessian equation we easily have existence of a $ \mathcal{C} $-subsolution.
	%
	

	\begin{proof}[Proof of Theorem $ \ref{teor_Hessian} $]
		%
		On $ \Gamma_k $ we define $ f=\log \sigma_k $, in order to rewrite the quaternionic Hessian equation as  
		\[
		f\left(\lambda\left(g^{\bar j r}(\Omega_{\bar j s}+\phi_{\bar j s})\right)\right)=h\,,
		\]
		for some positive $ h\in C^\infty(M,\R) $ depending on $ H $. The function $f$ satisfies conditions C1--C3 stated in the introduction (see e.g. \cite{Spruck}). 
		
		We apply the method of continuity. Let $ H_0\in C^\infty(M,\R) $ be the function such that
		\[
		\frac{\Omega^{k} \wedge \Omega^{n-k}_0}{\Omega^{n}_0}=\mathrm{e}^{H_0}
		\]
		and consider the $ t $-dependent family of equations
		\begin{equation}\tag{$ *_t $}\label{eq_continuity} 
				\frac{\Omega^{k}_{\phi_t} \wedge \Omega^{n-k}_0}{\Omega^{n}_0}=b_t\,\mathrm{e}^{tH+(1-t)H_0}, \quad \varphi_{t}\in {\rm QSH}_{k}(M,\Omega),  \quad t\in [0,1]\,.
		\end{equation} 
		Let
		\[
		S=\left\{ t\in [0,1] \mid \eqref{eq_continuity} \text{ has a solution }(\phi_t,b_t)\in C^{2,\beta}(M,\R)\times \R_+ \right\}\,.
		\]
		By our choice of $H_0 $, the pair $ (\phi,b)=(0,1) $ solves $ (*_0) $, hence the set $ S $ is non-empty.

		Since we assumed $ \Omega $ to be $ k $-positive $ \underline{\phi}\equiv 0 $ is  $\Gamma_k $-admissible and therefore a $ \mathcal{C} $-subsolution. Closedness of $ S $ now follows from the $ C^{2,\alpha} $-estimate of Theorem \ref{teor_main}, a standard bootstrapping argument and the Ascoli-Arzel\`a Theorem.
		
		Finally, in order to show that $S$ is open, take $ t'\in S $ and let $ (\phi_{t'},b_{t'}) $ be the corresponding solution to $ (*_{t'}) $. Consider the Banach spaces
		\[
		B_1:=\left\{ \psi \in C^{2,\beta}(M,\R) \mid \psi\in {\rm QSH}_k(M,\Omega)\,, \, \int_M \psi\, \Omega_0^n\wedge \bar \Omega_0^n=0 \right\}\,, \qquad B_2:=C^{0,\beta}(M,\R) \,,
		\]
		and the linearization of the operator
		\[
		B_1\times \R_+ \to B_2\,, \qquad (\psi,a) \mapsto \log \frac{\Omega^{k}_{\psi} \wedge \Omega^{n-k}_0}{\Omega^{n}_0} -\log(a)
		\]
		at $ (\phi_{t'},b_{t'}) $, which is
		\[
		L\colon T_{\phi_{t'}}B_1\times \R \to B_2\,, \qquad L(\rho,c)=
		k\frac{\partial \partial_J \rho \wedge \Omega^{k-1}_{\phi_{t'}} \wedge \Omega^{n-k}_0}{b_{t'}\,\mathrm{e}^{t'H+(1-t')H_0}\Omega_0^n}-\frac{c}{b_{t'}}=:L'(\rho)-\frac{c}{b_{t'}}\,,
		\]
		where
		\[
		T_{\phi_{t'}}B_1=\left\{ \rho \in C^{2,\beta}(M,\R) \mid \int_M \rho\, \Omega_0^n\wedge \bar \Omega_0^n=0 \right\}\,.
		\]
		By the maximum principle the kernel of the operator $ L' $ over $ C^{2,\beta}(M,\R) $ is the set of constant functions. Moreover the principal symbol of $L'$ is self-adjoint  and therefore $ L' $ has index zero, which implies that its formal adjoint $ (L')^* $ has one-dimensional kernel as well. 
		In order to show that $L$ is surjective, let $ \zeta\in C^{0,\beta}(M,\R)$ and choose $c\in \R$ such that $\zeta +c/ b_{t'} $ is orthogonal to $ \ker((L')^*) $. By the Fredholm alternative there exists $ \rho \in B_1 $ such that 
		$$
		L'(\rho)= \zeta +c/ b_{t'}
		$$
		and the surjectivity of $L$ follows. 
		
		By the inverse function theorem between Banach spaces
		$ S $ is open. This proves 
		the existence of a solution to the quaternionic Hessian equation.
		
		Finally we show uniqueness. Suppose $ (\phi_1,b_1),(\phi_2,b_2) $ are both solutions and assume $ b_1\geq b_2 $; then
		\[
		\left(\Omega^k_{\phi_1}-\Omega^k_{\phi_2}\right)\wedge \Omega_0^{n-k}\geq 0\,,
		\]
		which can be rewritten as
		\[
		\partial \partial_J(\phi_1-\phi_2)\wedge \left(\sum_{i=0}^{k-1} \Omega^{k-i-1}_{\phi_1}\wedge \Omega^{i}_{\phi_2}\right)\wedge \Omega_0^{n-k}\geq 0\,.
		\]
		Since 
		$$
		\varphi 	\mapsto \frac{\partial \partial_J\phi\wedge \left(\sum_{i=0}^{k-1} \Omega^{k-i-1}_{\phi_1}\wedge \Omega^{i}_{\phi_2}\right)\wedge \Omega_0^{n-k}}{\Omega_0^n}
		$$
		is a second order linear elliptic operator without free term, by  the maximum principle we deduce $ \phi_1=\phi_2 $ and thus also $ b_1=b_2 $.
	\end{proof}
	

\begin{proof}[Proof of Theorem $\ref{teor n-1 MA}$]
		Similarly as discussed in \cite{Sze}, let $T$ be the linear map given by
			\[
			T(\lambda) = \big(T(\lambda)_{1},\ldots,T(\lambda)_{n}\big)\,, \qquad
			T(\lambda)_{k} = \frac{1}{n-1}\sum_{i\neq k}\lambda_{i}\,,
			\]
			for every $\lambda\in\mathbb{R}^{n}$ and define
			\[
			f = \log\sigma_{n}(T), \qquad \Gamma = T^{-1}(\Gamma_{n})\,.
			\]
			It is straightforward to verify that the above setting satisfies the assumptions C1--C3 in the introduction. Let
			\[
			\Omega: =\Re \left(g^{\bar j s}(\Omega_{1})_{\bar j s}\right)\Omega_{0}-(n-1)\Omega_{1}\,.
			\]
			Thus, equation \eqref{n-1 MA} can be written as
			\[
			f(\lambda) = H+\log b\,, \qquad \lambda=\lambda\left(g^{\bar j r}(\Omega_{\bar j s}+\phi_{\bar j s})\right) \in \Gamma\,.
			\]
Then, Theorem \ref{teor n-1 MA} can be proved by a similar argument of Theorem \ref{teor_Hessian}, we give some details here.
	
We consider the following family of equations for $t\in[0,1]$:
	\begin{equation*}
	\begin{cases}
		\ \big(\Omega_{1}+\frac{1}{n-1}\big[(\Delta_{g} \varphi_t)\Omega_{0}-\partial\partial_J \varphi_t\big]\big)^{n} = {\rm e}^{tH+(1-t)H_{0}+c_{t}}\Omega_{0}^{n}\,, \\[2mm]
		\ \Omega_{1}+\frac{1}{n-1}\big[(\Delta_{g} \varphi_t)\Omega_{0}-\partial\partial_J \varphi_t\big] > 0\,, \quad \sup_{M}\varphi_t = 0\,,
	\end{cases}\eqno (*)_{t}
\end{equation*}
where $H_{0}=\log \frac{\Omega_{1}^{n}}{\Omega_{0}^{n}}$ and $c_{t}: [0,1]\rightarrow \mathbb{R}$ is a path from $c_{0}=0$ to $c_{1}=\log b$. Let us define
\[
S = \{t\in[0,1] \mid \text{ there exists a pair $(\varphi_{t},c_{t})\in C^{\infty}(M,\R)\times\mathbb{R}$ solving $(*)_{t}$ }\}\,.
\]
Note that $(\varphi_{0},c_{0})=(0,0)$ solves $(*)_{0}$ and hence $S\neq\emptyset$. To prove the existence of solutions to \eqref{n-1 MA equation 1},  it suffices to show that $S$ is both closed and open.

{\bf Step 1.} $S$ is closed.  We first show that $\{c_{t}\}$ is uniformly bounded. Suppose $\varphi_{t}$ achieves its maximum at the point $p_{t}\in M$, then the maximum principle yields that $\partial\partial_{J} \varphi_{t}$ is non-positive at $p_{t}$. Combining this with $(*)_{t}$, we obtain the upper bound for $c_{t}$:
\[
c_{t}\leq \left(-tH+H_{0}\right)(p_{t})
\leq C\,,
\]
for some $C$ depending only on $H$, $\Omega_{1}$ and $\Omega$. The lower bound of $c_{t}$ can be obtained similarly.

Observe that the positivity of $\Omega_{1}$ implies that $\underline{\varphi}\equiv 0$ is a $\mathcal{C}$-subsolution of $(*)_{t}$. Then $C^{\infty}$ a priori estimates of $\varphi_{t}$ follow from Theorem \ref{teor_main}. Combining this with the Arzel\`a-Ascoli theorem, we conclude that $S$ is closed.

{\bf Step 2.} $S$ is open.  Suppose there exists a pair $(\varphi_{\hat{t}},c_{\hat{t}})$ satisfies  $(*)_{\hat{t}}$. We shall prove that when $t$ is close to $\hat t$, there exists a pair $(\varphi_{t},c_{t})\in C^{\infty}(M,\R)\times\mathbb{R}$ solving $(*)_{t}$.  	

First of all, let $\Theta$ be a pointwise strictly positive $(2n,0)$-form with respect to $I$ which is $I$-holomorphic, namely $\bar{\partial}\Theta=0$. Equivalently, $\partial \bar \Theta=\partial_{J}\bar \Theta=0.$

For every function $\psi: M\rightarrow \mathbb{R}$ of class $C^{2}$, we define 
$$
L_{\hat{\varphi}}(\psi):=\frac{n}{n-1}\frac{\big((\Delta_{g} \psi)\Omega_{0}-\partial\partial_J \psi \big)\wedge \big(\Omega_{1}+\frac{1}{n-1}\big[(\Delta_{g} \hat{\varphi})\Omega_{0}-\partial\partial_J \hat{\varphi}\big]\big)^{n-1}}{\big(\Omega_{1}+\frac{1}{n-1}\big[(\Delta_{g} \hat{\varphi})\Omega_{0}-\partial\partial_J \hat{\varphi}\big]\big)^{n}}\,.
$$	
Since the operator $L_{\hat{\varphi}}$ is second order elliptic its symbol is self-adjoint, and therefore the index is zero. Then the classical maximum principle yields that
\begin{equation}\label{ker L1}
	\ker(L_{\hat{\varphi}})=\{\text{const}\}\,.
\end{equation}
Denote by $L_{\hat{\varphi}}^{*}$ the $L^{2}$-adjoint operator of $L_{\hat{\varphi}}$ with respect to the volume form
$$
\text{dvol}=\left(\Omega_{1}+\frac{1}{n-1}\big[(\Delta_{g} \hat{\varphi})\Omega_{0}-\partial\partial_J \hat{\varphi}\big]\right)^{n}\wedge\bar{\Theta}\,.
$$ 
By the index theorem, we know there is a non-negative function $\zeta$ such that
\begin{equation}\label{ker L*}
	\ker(L_{\hat{\varphi}}^{*})=\text{Span}\big\{\zeta\big\}\,.
\end{equation}
It follows from the strong maximum principle that $\zeta>0$. Up to a constant, we may and do assume 
$$\int_{M}\zeta\, \text{dvol}=1.$$
Define a Banach space
\[B_1: =\left\{\phi\in C^{2,\alpha}\mid \lambda\left(g^{\bar j r}(\Omega_{\bar j s}+\phi_{\bar j s})\right) \in \Gamma, \int_{M}\phi\zeta \, \text{dvol}=0\right\}.\]
It is easy to verify that the tangent space of $B_{1}$ at $\hat{\varphi}$
is given by
\[T_{\hat{\varphi}}B_{1} =\left\{\psi\in C^{2,\alpha}(M,\R)\mid \int_{M}\psi\zeta \, \text{dvol}=0\right\}\,.\]
Let us consider the map
\begin{equation*} 
	\tilde H (\phi,c)=\log\frac{\big(\Omega_{1}+\frac{1}{n-1}\big[(\Delta_{g} \varphi)\Omega_{0}-\partial\partial_J \varphi\big]\big)^{n}}{\Omega_{0}^{n}}-c\,,
\end{equation*}
which maps $B_{1}\times \mathbb{R}$ to $C^{0,\alpha}$. The linearized operator of $\tilde H$ at $(\hat{\varphi},\hat{t})$ is given by 
\begin{equation}\label{linear operator}
	L_{\hat{\varphi}}-c: T_{\hat{\varphi}}B_{1}\times \mathbb{R}\rightarrow  C^{0,\alpha}(M,\R)\,.
\end{equation}
On the one hand, for any real-valued $h\in  C^{0,\alpha}(M)$, there exists a unique real constant $c$ such that 
$$\int_{M}(h+c)\zeta \, \text{dvol}=0.$$ 
By \eqref{ker L*} and Fredholm theorem, there exists  a real function $\psi$ on $M$ such that
$ L_{\hat{\varphi}}(\psi)-c=h.$ 
Hence, the map $L_{\hat{\varphi}}-c$ is surjective. On the other hand, let $(\psi_{1}, c_{1})$ be a solution of $L_{\hat{\varphi}}(\psi)-c=0.$
By \eqref{ker L*} and Fredholm theorem again, we get $c_{1}=0$. Using \eqref{ker L1} and \eqref{linear operator}, we also obtain $\psi_{1}=0.$
Therefore, $L_{\hat{\varphi}}-c$ is injective.

As a consequence, we conclude that $L_{\hat{\varphi}}-c$ is bijective.
By the implicit function theorem,  we know that when $|t-\hat{t}|$ is small enough, there exists a pair $(\varphi_{t},c_{t})$ satisfying
\begin{equation*}
	\tilde H(\varphi_{t},c_{t})=tH+(1-t)H_0.
\end{equation*}

In the general case, when we assume $M$ is a compact manifold which admits a flat hyperk\"ahler metric $g$ compatible with the underlying hypercomplex structure, we may take $\Theta=\Omega^{n}$ and apply the previous procedure to show existence of solutions to \eqref{n-1 MA equation 1}.

Uniqueness can be obtained with a very similar technique as in Theorem \ref{teor_Hessian}, therefore we omit the proof here.
\end{proof}

Before we move on to the proof of Corollary \ref{cor} we need to lay down some preliminaries in linear algebra in order to mimic the proof of \cite[Corollary 1.3]{TW17}. Let $(M,I,J,K,g,\Omega_0)$ be a compact hyperhermitian manifold. Let $(z^1,\dots,z^{2n})$ be holomorphic coordinates with respect to $I$ and denote $\Lambda_I^{p,0}(M)$ the space of $(p,0)$-forms with respect to $I$. Consider the pointwise inner product $\langle \cdot,\cdot \rangle_{g}$ defined by
\[
\langle \alpha,\beta\rangle_g=\frac{1}{p!} g^{r_1 \bar s_1}\cdots g^{r_p \bar s_p} \alpha_{r_1 \cdots r_p} \overline{\beta_{s_1\cdots s_p}}\,, \qquad \text{for every } \alpha,\beta \in \Lambda^{p,0}_I(M)\,,
\]
where any $(p,0)$-form $\alpha $ is locally written as $\alpha=\frac{1}{p!}\alpha_{r_1 \cdots r_p}dz^{r_1}\wedge \dots \wedge dz^{r_p} $ and $(g^{r\bar s})$ is the inverse of the Hermitian matrix $(g_{r\bar s})$ induced by the $I$-Hermitian metric $g$.

We will need the following Hodge star-type operator $*\colon \Lambda_I^{p,0}(M) \to \Lambda_I^{2n-p,0}(M)$, defined by the relation
\[
\alpha \wedge *\beta=\frac{1}{n!} \langle \alpha,\beta\rangle_{g}\Omega_0^n\,,\qquad \text{for } \alpha,\beta \in \Lambda_I^{p,0}(M)\,.
\]

We fix a point $x_0\in M$ and take holomorphic coordinates $(z^1,\dots,z^{2n})$ with respect to $I$ such that $(g_{r\bar s})$ is the identity at $x_0$, then we may compute
\begin{equation}
\label{Hodge}
*(dz^{2i-1}\wedge dz^{2i})=dz^1\wedge \cdots \wedge \widehat{dz^{2i-1}} \wedge\widehat{dz^{2i}} \wedge \cdots \wedge dz^{2n}\,.
\end{equation}

Observe that the Hodge operator sends q-real $(2,0)$-forms to q-real $(2n-2,0)$-forms and vice versa. Recall that, when the hypercomplex structure is locally flat, to any q-real $(2,0)$-form $\Omega$ is associated a hyperhermitian matrix $(\Omega_{\bar r s})$, thus, we may define the determinant of $\Omega$ as the Moore determinant of $(\Omega_{\bar r s})$. This definition naturally extends to any q-real $(2n-2,0)$-form $\Phi$ by setting $\det(\Phi)=\frac{1}{(n-1)!}\det(*\Phi)$. In particular, for any q-real $\Omega \in \Lambda_I^{2,0}(M)$, we have
\begin{equation}\label{Hodge1}
\det(\Omega^{n-1})=\det(\Omega)^{n-1}\,,
\end{equation}
which can be checked by taking coordinates in which $(\Omega_{\bar r s})$ is diagonal at a given point and using \eqref{Hodge}. Indeed, the fact that we can choose coordinates that diagonalize both $(g_{r\bar s})$ and $(\Omega_{\bar r s})$ is ensured by \cite[Lemma 3]{Sroka}. For any pair of q-real $\chi, \Omega \in \Lambda_I^{2,0}(M)$, we also have 
\begin{equation}\label{Hodge2}
\frac{\chi^n}{\Omega^n}=\frac{\det(\chi)}{\det(\Omega)}=\frac{\det(*\chi)}{\det(*\Omega)}\,.
\end{equation}

A q-real $(2n-2,0)$-form $\Phi$ is said to be positive if $\Phi \wedge \Omega>0$ for all positive $(2,0)$-forms $\Omega$. We observe that the Hodge star maps positive $(2,0)$-forms to positive $(2n-2,0)$-forms and conversely. On a locally flat hyperhermitian manifold the $(n-1)$\textsuperscript{th} power $\Omega\mapsto \Omega^{n-1}$ is a bijective correspondence between the cone of positive $(2,0)$-forms and the cone of positive $(2n-2,0)$-forms. The proof of this fact is just a matter of linear algebra and it is entirely analogous to the argument in \cite[pp. 279-280]{Michelson}, therefore we omit it.

\begin{proof}[Proof of Corollary $\ref{cor}$] 
For starters, we claim 
\begin{equation}\label{Hodge3}
	\frac{1}{(n-1)!} * \left( \partial \partial_J \phi \wedge \Omega_0^{n-2} \right)=\frac{1}{n-1} \left[ (\Delta_g \phi) \Omega_{0}-\partial \partial_J \phi \right]\,,
\end{equation}	
for any arbitrary function $\phi \in C^2(M,\R)$. It is enough to prove that for every $W\in \Lambda_I^{2n-2,0}(M)$, we have
\[
\partial \partial_J \phi \wedge \frac{\Omega_0^{n-2}}{(n-2)!}\wedge (*W)=(\Delta_g \phi) W\wedge\Omega_{0}-W\wedge \partial \partial_J \phi.
\]
Let $Z=dz^{1}\wedge \cdots \wedge dz^{2n}$ for simplicity and fix a point $x_0\in M$ where $\Omega_0$ takes the standard form
\[
\Omega_0=\sum_{i=1}^n dz^{2i-1}\wedge dz^{2i}\,.
\]
Without loss of generality, we may assume 
$W=\widehat{dz^{1}}\wedge \widehat{dz^{2}}\wedge dz^{3}\wedge \cdots \wedge dz^{2n}$. It is easy to see that
\[
W\wedge\Omega_{0}=Z\,, \qquad W\wedge \partial \partial_J \phi=(\phi_{1\bar 1}+\phi_{2\bar 2})Z\,.
\]
As $*W=dz^{1}\wedge dz^{2}$, we obtain
\[
\begin{split}
\partial \partial_J \phi \wedge \frac{\Omega_0^{n-2}}{(n-2)!}\wedge (*W)=&
\partial \partial_J \phi \wedge \frac{\Omega_0^{n-2}}{(n-2)!}dz^{1}\wedge dz^{2}\\
=&\partial \partial_J \phi \wedge \sum_{i>1} dz^{1}\wedge dz^{2}\wedge \cdots \widehat{dz^{2i-1}}\wedge \widehat{dz^{2i}} \wedge \cdots \wedge dz^{2n}\\
=&\sum_{i>1}(\phi_{2i-1\overline{2i-1}}+\phi_{2i\overline{2i}})Z=(\Delta_g\phi)Z-(\phi_{1\bar 1}+\phi_{2\bar 2})Z\\
=&(\Delta_g \phi) W\wedge\Omega_{0}-W\wedge \partial \partial_J \phi\,,
\end{split}
\]
as claimed.

From \eqref{Hodge2} and \eqref{Hodge3}, it follows that
\[
\begin{split}
\frac{\left(\Omega_1+\frac{1}{n-1} \left[(\Delta_g \phi) \Omega-\partial \partial_J \phi \right]\right)^n}{\Omega_0^{n}}&=\frac{\det\left(*\left(\Omega_1+\frac{1}{n-1} \left[(\Delta_g \phi) \Omega-\partial \partial_J \phi \right]\right)\right)}{\det(*\Omega_0)}\\
&=\frac{\det\left(\Omega_{2}^{n-1}+\partial \partial_J \phi \wedge \Omega_0^{n-2}\right)}{\det(\Omega_0^{n-1})}\,.
\end{split}
\]
This implies that given a positive $(2,0)$-form $\Omega_1$ and a smooth function $H$ on $M$, the pair $(\phi,b)\in C^\infty(M,\R)\times \R_+$ is a solution to \eqref{n-1 MA equation 1} if and only if it solves
\begin{equation}\label{n-1 MA equation 2}
	\begin{cases}
		\det (\Omega_{2}^{n-1}+\partial \partial_J \phi \wedge \Omega_0^{n-2})=b\,{\rm e}^{H}\det(\Omega_{0}^{n-1})\,, \\[2mm]
		\Omega_{2}^{n-1}+\partial \partial_J \phi \wedge \Omega_0^{n-2} > 0\,, \quad \sup_{M}\phi = 0\,,
	\end{cases}
\end{equation}
where $\Omega_2$ is uniquely defined by
\begin{equation*}
\Omega_1=\frac{1}{(n-1)!}*\Omega_2^{n-1}\,,
\end{equation*}
because the $(n-1)$\textsuperscript{th} power is a bijection between the spaces of positive $(2,0)$-forms and positive $(2n-2,0)$-forms.

Now, let $(\phi,b)\in C^\infty(M,\R)\times \R_+$ be the solution to \eqref{n-1 MA equation 1}, or equivalently \eqref{n-1 MA equation 2}, with datum $H=(n-1)H'$. Define $\tilde \Omega$ as the unique $(n-1)$\textsuperscript{th} root of $\Omega_{2}^{n-1}+\partial \partial_J \phi \wedge \Omega_0^{n-2}$. Then it is clear that if $\Omega_2$ is the $(2,0)$-form induced by a quaternionic balanced (resp. quaternionic Gauduchon, quaternionic strongly Gauduchon) metric, then so is $\tilde \Omega$. Finally, set $b'=b^{1/(n-1)}$, then using \eqref{Hodge1} we conclude
\[
\frac{\tilde\Omega^n}{\Omega_0^n} 
=\left(\frac{\det(\tilde \Omega^{n-1})}{\det(\Omega_0^{n-1})}\right)^{\frac{1}{n-1}}=\left( \frac{\det \left( \Omega_{2}^{n-1}+\partial \partial_J \phi \wedge \Omega_0^{n-2}\right)}{\det(\Omega_0^{n-1}) } \right)^{\frac{1}{n-1}}=\left( b\, {\rm e}^H \right)^{\frac{1}{n-1}}=b'\, {\rm e}^{H'}\,. \qedhere
\]
\end{proof}

\end{document}